\tikzstyle{every picture}=[line width=.65pt,minimum size=3pt,every label/.append style={font=\small},label distance=-2pt]
\tikzstyle{every node}=[font=\small]
\tikzset{>=stealth}
\tikzstyle{vtx}=[circle,draw,thick,fill=black!50]
\tikzstyle{vtxr}=[circle,draw,thick,fill=red!50]
\tikzstyle{vtxp}=[circle,draw,thick,fill=pink!50]
\tikzstyle{vtxb}=[circle,draw,thick,fill=blue!30]
\tikzstyle{vtxgre}=[circle,draw,thick,fill=green!60]
\newtheorem{theorem}{Theorem}[section]
\newtheorem{lemma}[theorem]{Lemma}
\newtheorem{corollary}[theorem]{Corollary}
\newtheorem{definition}[theorem]{Definition} 
\newtheorem{observation}[theorem]{Observation}
\newtheorem{remark}[theorem]{Remark}
\newcommand{\type}{\mathrm{type}}
\mathchardef\mhyphen="2D
\newcommand{\B}{\mathcal{B}}
\newcommand{\R}{\mathbb{R}}
\newcommand{\conv}{\operatorname{conv}}
\newcommand{\Z}{\mathbb{Z}}
\newcommand{\rk}{\operatorname{rk}}
\newcommand{\lpr}{\operatorname{lpr}}
\newcommand{\sk}{\operatorname{sk}}
\title{Binary extended formulations and sequential convexification}
\author{Manuel Aprile \and Michele Conforti \and Marco Di Summa}
\begin{document}
\maketitle

\begin{abstract}
A binarization of a bounded variable $x$ is a linear formulation with variables $x$ and additional binary variables $y_1,\dots, y_k$, so that integrality of $x$ is implied by the integrality of  $y_1,\dots, y_k$.

A binary extended formulation of a polyhedron $P$ is obtained by adding to the original description of $P$ binarizations of some of its variables. In the context of mixed-integer programming, imposing integrality on 0/1 variables rather than on general integer variables has interesting convergence properties and has been studied both from the theoretical and from the practical point of view.

We propose a notion of \emph{natural} binarizations and binary extended formulations, encompassing all the ones studied in the literature. We give a simple characterization of the vertices of such formulations, which allows us to study their behavior with respect to  sequential convexification. 
In particular, given a binary extended formulation and 
 one of its variables $x$, we study a parameter  that measures the progress made towards ensuring the integrality of $x$ via application of sequential convexification. We formulate this parameter, which we call rank, as the solution of a set covering problem and express it exactly for the classical binarizations from the literature.

\end{abstract}

\section{Introduction}

A  technique that has been widely investigated in combinatorial optimization and integer  programming is to formulate a given problem in a higher-dimensional space, comprising  the original, natural variables along with a number of additional variables. This general idea is motivated by the fact that these formulations, called  {\em extended formulations}, may be easier to deal with, as they may have a small number of inequalities or may exhibit a structure that is not evident in the original space. (See, e.g., \cite{conforti2010extended} for a survey on extended formulations.)

One way to obtain an extended formulation of a mixed-integer program is to reformulate each bounded integer variable by means of a set of binary variables, which are then relaxed to lie in the interval $[0,1]$. In this case, the formulation is called a {\em binary extended formulation}. The use of binary variables is motivated both by theoretical and computational reasons, as we will discuss below.

The idea of using binary reformulations dates back at least to Glover \cite{glover1975improved}, who proposed methods to linearize nonlinear conditions by means of binary variables.  In the context of mixed-integer linear programming, different binarization techniques have been explored in the literature. We now mention some of the most relevant methods. Below we assume that $x$ is an integer variable whose range of possible values is $\{0,\dots,k\}$.
\begin{itemize}
\item Sherali and Adams \cite{sherali2013reformulation} suggested to model $x$ by using $k$ binary variables $y_1,\dots,y_k$ as follows: $x=\sum_{i=1}^ki\cdot y_i$ with $\sum_{i=1}^ky_i\le1$. This is sometimes referred to as the {\em full binarization}.
\item Assuming for the sake of simplicity that $k=2^t-1$ for some integer $t\ge0$, Owen and Mehrotra \cite{owen2002value} proposed a reformulation with $t$ binary variables of the form $x=\sum_{i=1}^t2^{i-1}y_i$. This is called the {\em compact} or {\em logarithmic binarization}.
\item Roy \cite{roy2007binarize} suggested to write $x=\sum_{i=1}^k y_i$ with $y_1\ge\dots\ge y_k$. Following \cite{dash2018binary}, we call this the {\em unary binarization}.
\end{itemize}

In all the above examples, the integrality requirement on $x$ can be relaxed, as it is implied by the integrality of $y$. When variables $y$ are only required to be in the interval $[0,1]$, one obtains a polytope $P$ in the variables $x$ and $y$ with the following property: the projection of the set $\{(x,y)\in P:y_i\in\{0,1\}\:\forall i\}$ is the range of possible values of $x$, i.e., $\{0,\dots,k\}$. Furthermore, the rule used to model $x$ by means of binary variables is independent of the constraints on the original $x$ variables. A polytope of this type is called a {\em binarization polytope}, or simply a {\em binarization}, and a binary extended formulation of a polyhedron $P$ can be obtained by adding to the description of $P$ a binarization for each integer variable of $P$. (We postpone the formal definitions to Section \ref{sec:bin}.) 
We also remark that in the above binarizations $x$ is linked to the $y$ variables by means of a linear equation. In this situation, we will call the binarization {\em affine}. However, binarizations in general may not satisfy this property, and indeed most of our results apply to non-affine binarizations, as well.\medskip

Modelling with binary variables allows the use of {\em canonical disjunctions} of the form ``$y_i=0$ or $y_i=1$'' (where $y_i$ is a binary variable), which induce faces of the formulation. 
This property implies that, assuming that there are, say, $d$ binary variables and all other variables are continuous (as is the case for a binary extended formulation), the integer hull can be obtained in $d$ steps by taking the convex hull of the faces induced by $y_i=0$ and $y_i=1$, and iterating over all binary variables. (This is Balas' sequential convexification technique \cite{balas1998disjunctive}, which we review in Section \ref{sec:seq-conv}). In contrast, general split disjunctions in the original space (i.e., disjunction of the form ``$ax\le\beta$ or $ax\ge\beta+1$" for some integer vector $a$ and some integer number $b$) do not necessarily yield the integer hull in a finite number of steps. (See, e.g., \cite{conforti2014integer} for more details on split disjunctions.)

From the practical point of view, binary mixed-integer problems tend to be easier to solve, compared to problems with general integer variables. However, there is a trade-off with the increase of computational effort due to the addition of many variables, and indeed, there are contrasting results on this point. For instance, Owen and Mehrotra \cite{owen2002value} showed that, restricting sequential convexification to the binary variables of a single logarithmic binarization, one needs to convexify all such variables in order to reduce the range of the original integer variable by half: this suggests that  using this type of binarization out-of-the-box in a branch-and-bound framework might result in larger trees than those one would obtain in the original space. 
On the other hand, Roy \cite{roy2007binarize} proved that if one uses the unary binarization and generates suitable cuts that can be expressed in terms of the original variables, then the resulting cutting planes are in general stronger than those that one would produce directly in the original space. Bonami and Margot \cite{bonami2015cut} showed a result in the same spirit, where only canonical disjunctions are applied.
\medskip

Roy \cite{roy2007binarize} gave a definition of binarization for the affine case. A generalization of this notion, that we also adopt in this paper, was given by Dash, G\"unl\"uk and Hildebrand \cite{dash2018binary}. They studied the behavior of binarizations with respect to the split closure, and showed that a class of binarizations called unimodular is optimal in this sense. Moreover, they showed that using appropriate binarizations can lead to smaller branch-and-bound trees, in contrast to what is suggested by \cite{owen2002value}. 

The unimodular binarizations from \cite{dash2018binary}, although optimal with respect to the split closure, have a large number of variables (exactly $k$ if the range of the original variable is $\{0,\dots,k\}$). Dash et al. \cite{dash2018binary} suggest that it would be interesting to investigate the strength of binarizations with fewer variables.
\medskip

The work described above motivates the question: how can we evaluate binarizations and determine which ones are ``better''? 
In this paper we approach the question from an original perspective, centered around the technique of sequential convexification. We remark that the performance of sequential convexification on a binary extended formulation depends heavily on the polytope of interest, hence different choices of binarizations might be preferable for different polytopes: 
in spite of that, we offer a notion of rank that depends on a single binarization (and not on the original polytope) and models its behaviour with respect to sequential convexification. 

Our contributions can be summarized as follows:
\begin{itemize}
    \item In Section \ref{sec:seqcon-bin} we observe that the smallest number of disjunctions that remove a given set of fractional vertices is the solution of a set covering problem. We will build on this observation throughout the paper.
    \item In Section \ref{sec:vertices} we characterize the vertices of a binary extended formulation and their projections on the original space, provided the binarizations satisfy a ``natural" assumption. This applies, in particular, to the three fundamental  binarizations mentioned above.
    \item In Section \ref{sec:rank} we define a notion of rank of a binarization that is related to the lift-and-project rank of a polytope (defined in Section \ref{sec:seq-conv}). Informally, our rank models the problem of 
removing a set of fractional vertices of a binary extended binarization by performing the smallest number of disjunctions on the 0/1 variables of a fixed single binarization. We show that, under the aforementioned natural assumptions, our rank is the solution of a set covering problem that depends on the binarization only, and not on the rest of the binary extended formulation.
\item In Section \ref{sec:rank} we also provide explicit formulas for the rank of the three  binarizations mentioned above, and we especially focus on the logarithmic binarization, which has the smallest possible number of variables: we show that it has minimum rank among all binarizations with the same ``structure'' (in particular, with the same number of variables).
\end{itemize}
 
 We believe that our notion of rank leads to an educated guess on which binarizations yield better computational results, as it measures ``partial progress'' towards obtaining the mixed integer hull of the polytope of interest. This is motivated further in Section \ref{sec:conclusion}, where we review our results and propose further directions of research.

\section{Sequential convexification and binarizations}\label{sec:seqcon-bin}

\subsection{Vertex interpretation of sequential convexification}\label{sec:seq-conv}

We recall a fundamental result of Balas \cite{balas1998disjunctive} on sequential convexification of a polytope contained in the unit cube, and  give an elementary proof  that will be useful throughout the paper.

Given a polytope $Q$, we denote with $V(Q)$ the set of its vertices. 
If $Q\subseteq [0,1]^h\times \R^{n-h}$ and we fix any variable $x_i$ with $i\in [h]$ (where we use the notation $[h]:=\{1,\dots,h\}$), we define 
$$Q_{x_i}:=\conv\left(\{x\in Q:x_i=0\} \cup \{x\in Q:x_i=1\}\right).$$
We now give a ``vertex'' proof of the sequential convexification theorem of Balas \cite{balas1998disjunctive}.
\begin{theorem} [Sequential convexification theorem of Balas] \label{thm:balas}
For every polytope  $Q\subseteq [0,1]^h\times \R^{n-h}$, we have  $$\conv\{x\in Q : x_i\in \{0,1\}\:\forall i\in [h]\}=(((Q_{x_1})_{x_2})\dots )_{x_h}.$$
\end{theorem}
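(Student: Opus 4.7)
The plan is to induct on the number $h$ of $0/1$ variables, tracking what the vertices of each intermediate polytope look like. Set $R_0 := Q$ and $R_k := (R_{k-1})_{x_k}$ for $k \in [h]$, so the identity to prove becomes
$$R_h = \conv\{x \in Q : x_i \in \{0,1\} \; \forall i \in [h]\}.$$

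The inclusion ``$\supseteq$'' is direct: any $x \in Q$ with $x_1,\dots,x_h \in \{0,1\}$ belongs to every $R_k$ by a trivial induction, since at step $k$ the point already lies in the slab $\{x \in R_{k-1} : x_k \in \{0,1\}\}$ whose convex hull is, by definition, $R_k$. Convexity of $R_h$ then yields the inclusion.

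For the reverse direction ``$\subseteq$'', I would prove by induction on $k$ the following statement: every vertex of $R_k$ satisfies $x_1,\dots,x_k \in \{0,1\}$. Two elementary facts support the step from $k-1$ to $k$:
\begin{enumerate}[(i)]
\item any vertex of $\conv(A \cup B)$, for polytopes $A,B$, is necessarily a vertex of $A$ or of $B$;
\item since $R_{k-1} \subseteq [0,1]^h \times \R^{n-h}$ (a property preserved at every step, because $R_k \subseteq R_{k-1}$), each of $\{x \in R_{k-1} : x_k = 0\}$ and $\{x \in R_{k-1} : x_k = 1\}$ is a face of $R_{k-1}$, so its vertices are themselves vertices of $R_{k-1}$ that happen to satisfy $x_k \in \{0,1\}$.
\end{enumerate}
Combining (i) applied to $R_k = \conv(\{x \in R_{k-1} : x_k = 0\} \cup \{x \in R_{k-1} : x_k = 1\})$ with (ii), every vertex of $R_k$ is a vertex of $R_{k-1}$ whose $x_k$-coordinate is in $\{0,1\}$; the inductive hypothesis then supplies $x_1,\dots,x_{k-1} \in \{0,1\}$. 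Taking $k = h$: each vertex of $R_h$ lies in $Q$ (since $R_h \subseteq Q$) and has $x_1,\dots,x_h \in \{0,1\}$, so $R_h$ is contained in the convex hull of such points, completing the proof.

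The only delicate point is fact (ii), whose validity depends on keeping the inclusion $R_k \subseteq [0,1]^h \times \R^{n-h}$ alive throughout the induction; this must be stated explicitly so that the hyperplanes $\{x_k = 0\}$ and $\{x_k = 1\}$ actually support $R_{k-1}$ and thus cut out faces. Everything else is an unpacking of the standard fact that a vertex of a union of convex hulls is a vertex of one of the pieces, which is precisely the vertex-oriented viewpoint the authors want to set up for use in the later sections on binarizations.
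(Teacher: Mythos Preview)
Your proposal is correct and follows essentially the same vertex-based argument as the paper: both hinge on the fact that $\{x_k=0\}$ and $\{x_k=1\}$ are faces of $R_{k-1}$, so that $V(R_k)\subseteq\{v\in V(R_{k-1}):v_k\in\{0,1\}\}$, and then iterate. The paper compresses the induction into a single line by directly asserting $V(Q_{x_1})=\{x\in V(Q):x_1\in\{0,1\}\}$ and appealing to iteration, whereas you spell out the inductive step and the two supporting facts (i) and (ii) explicitly; the content is the same.
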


\begin{proof} Since $x_1=0$ defines a face of $Q$, say $F_0$, we have $V(F_0)=\{x\in V(Q):x_1=0\}$. Therefore, if we let $V_1$ be the set of vertices of $Q$ with $0<x_1<1$, then  $V(Q_{x_1})=V(Q)\setminus V_1=\{x\in V(Q):x_1\in \{0,1\}\}$. By iterating this argument, we obtain $V((((Q_{x_1})_{x_2})\dots )_{x_h})=\{x\in V(Q):x_i\in \{0,1\}\:\forall i\in [h]\}$, and this proves the result.
\end{proof}

While above theorem is stated for polytopes,  it holds for a (possibly unbounded) polyhedron $Q\subseteq [0,1]^h\times \R^{n-h}$. Indeed, by intersecting $Q$ with the orthogonal complement of its lineality space (so the resulting polyhedron  contains vertices whenever $Q$ is nonempty), and observing  that the recession cone of $Q$ is contained in $\{0\}^h\times \R^{n-k}$, the above proof can be carried out in the same way. We also remark that Theorem \ref{thm:balas} implies that the actual order of the sequence of convexifications is irrelevant.
\medskip

Note that while Balas extended formulation for the convex hull of the union of polytopes provides an inequality description of $Q_{x_1}$ (and of $(((Q_{x_1})_{x_2})\dots )_{x_h}$) in an extended space (see \cite[Section 4.9]{conforti2014integer}), the inequality description of $Q_{x_1}$ in the original space may be very complicated with respect to the original description of the polytopes  \cite{conforti2020balas}. However, the above proof is based on the fact  that a vertex description of $\conv\{x\in Q : x_i\in \{0,1\}\:\forall i\in [h]\}$ can be derived immediately from $V(Q)$.
\medskip

Let $Q\subseteq [0,1]^h\times \R^{n-h}$ be a polytope. The \emph{lift-and-project rank} $\lpr(Q)$ (with respect to variables $x_1,\dots,x_h$) is the minimum integer $k$ such that there are variables $x_{i_1},\dots, x_{i_k}$, with $i_1,\dots, i_k\in [h]$, satisfying $((Q_{x_{i_1}})\dots)_{ x_{i_k}}=\conv\{x\in Q : x_i\in \{0,1\}\:\forall i\in [h]\}$. (See, e.g., \cite[Section 5.4]{conforti2014integer})

Theorem \ref{thm:balas} shows that $\lpr(Q)\le h$. Furthermore,  one can derive a combinatorial interpretation of the lift-and-project rank as the solution of a minimum  hitting set (or set covering) problem.
Indeed, let $V_F$ be the set of vertices in $V(Q)$ containing at least one fractional component among $x_1,\dots,x_h$, and let $\mathcal{F}\subseteq 2^{[h]}$ be the family that contains, for each vertex in $V_F$, the subset of indices of the fractional components of that vertex (where only components $x_1,\dots,x_h$ are considered). 
Then $\lpr(Q)$ is exactly the minimum size of a hitting set of $\mathcal{F}$, i.e., a minimum subset of $[h]$ that intersects all the sets in $\mathcal F$. Thus, if $A_{Q}$ denotes the $|V_F|\times h$ $0/1$-matrix whose rows are the incidence vectors of the members of $\mathcal F$, we have the following. (We use $\mathbf1$ to denote the all-one vector of appropriate dimension.)

\begin{observation}\label{obs:set-covering}
Let $Q\subseteq [0,1]^h\times \R^{n-h}$ be a polytope.
Then $$\lpr(Q)=\min\{ \mathbf {1}z:A_{Q}z\ge \mathbf{1},\, z\in \{0,1\}^h\}.$$
\end{observation}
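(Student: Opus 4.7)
The plan is to extract from the vertex-based proof of Theorem~\ref{thm:balas} a single-variable statement and then iterate it. Specifically, that proof shows that for any $i\in[h]$ and any polytope $Q\subseteq[0,1]^h\times\R^{n-h}$, the vertex set of $Q_{x_i}$ is exactly $\{v\in V(Q):v_i\in\{0,1\}\}$: no new vertices are introduced, and precisely the vertices fractional in coordinate $i$ are eliminated. Applying this inductively to any ordered tuple $(i_1,\dots,i_k)$ of elements of $[h]$ gives
\[
V\bigl(((Q_{x_{i_1}})\dots)_{x_{i_k}}\bigr)=\{v\in V(Q):v_{i_j}\in\{0,1\}\text{ for all }j\in[k]\},
\]
which depends only on the set $S:=\{i_1,\dots,i_k\}$ and not on the order.

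Next I would compare this with the target integer hull. By Theorem~\ref{thm:balas} (applied with all $h$ variables), $\conv\{x\in Q:x_i\in\{0,1\}\ \forall i\in[h]\}$ has vertex set $\{v\in V(Q):v_i\in\{0,1\}\ \forall i\in[h]\}$. Hence sequential convexification along the indices in $S$ yields the integer hull if and only if every fractional vertex $v\in V_F$ has at least one coordinate in $S$ that is fractional at $v$, i.e., if and only if $S$ intersects the set of fractional-coordinate indices of each $v\in V_F$. This is exactly the condition that $S$ is a hitting set for $\mathcal{F}$.

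Finally, encoding $S$ as its incidence vector $z\in\{0,1\}^h$, the hitting set condition is $A_Q z\ge\mathbf{1}$, and $|S|=\mathbf{1}z$. Minimizing over $S$ therefore gives the claimed identity. The only subtlety worth flagging is the order-independence of the intermediate vertex descriptions: once one has the observation, already implicit in the proof of Theorem~\ref{thm:balas}, that each convexification step introduces no spurious vertices outside the surviving subset of $V(Q)$, the argument reduces to a direct dictionary between this geometric reformulation and the set-covering integer program — no further work is required.
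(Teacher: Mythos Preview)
Your proposal is correct and follows essentially the same approach as the paper: the paper derives the observation directly from the vertex description established in the proof of Theorem~\ref{thm:balas} (namely $V(((Q_{x_{i_1}})\dots)_{x_{i_k}})=\{v\in V(Q):v_{i_j}\in\{0,1\}\ \forall j\}$), and then translates the condition of reaching the integer hull into the minimum hitting set / set covering formulation. Your write-up makes explicit the intermediate step that each convexification introduces no new vertices, which is exactly the mechanism the paper uses implicitly.
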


As it is customary, we refer to the integer program above as a \emph{set covering problem}.

Sequential convexification can be applied, in principle, also when $x_1,\dots,x_h$ are not bounded between 0 and 1. Indeed, one can consider a canonical disjunction of the form ``$x_i\le\alpha$ or $x_i\ge\alpha+1$'' for some $\alpha\in\Z$ and update $Q$ with $\conv(\{x\in Q:x_i\le\alpha\}\cup\{x\in Q:x_i\ge\alpha+1\})$. However, iterating this procedure does not always lead to $\conv\{x\in Q:x_i\in\Z\:\forall i\in[h]\}$. For instance, if $Q\subseteq \R^2$ is defined as the convex hull of $\{(0, 0),(1.5, 1),(2, 2),(1, 1.5)\}$, one can see (\cite[Theorem 2.4]{basu2020complexity}) that the integer hull of $Q$ cannot be obtained by applying a finite sequence of canonical split disjunctions.

\subsection{Binarizations}\label{sec:bin}

Given a set $S\subseteq\{(x,y):(x,y)\in\R^p\times\R^d\}$, we denote by $\pi_x(S)$ the orthogonal projection of $S$ on the $x$-space, i.e., $\pi_x(S)=\{x\in\R^p:(x,y)\in S\mbox{ for some $y\in\R^d$}\}$; $\pi_y(S)$ is defined similarly.

  A polytope $B\subseteq\{(x,y):(x,y)\in \R\times [0,1]^d\}$ is a \emph{binarization} of variable  $x$ in the range $\{0,\dots,k\}$ (for some positive integer $k$) if
  \begin{equation}\label{eq:binarization}
  \pi_{x}(\{(x,y)\in B:y\in \{0,1\}^d\})=\{0,\dots,k\}.
  \end{equation}
  This implies that $d\ge \lceil \log (k+1)\rceil$.
The following was observed in \cite{dash2018binary}.

\begin{observation}\label{obs:onex}
 Let $B\subseteq \R\times [0,1]^d$ be a binarization of variable  $x$ in the range $\{0,\dots,k\}$. Given  $y\in \{0,1\}^d$, there is at most one $x$ such that $(x,y)\in B$. 
If such $x$ exists, then $x\in \{0,\dots,k\}$ and $(x,y)\in V(B)$.
\end{observation}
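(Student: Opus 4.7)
The plan is to obtain all three assertions from the defining property \eqref{eq:binarization} together with the convexity of $B$.

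\textbf{Uniqueness.} First I would establish the uniqueness claim by contradiction. Suppose $(x_1,y),(x_2,y)\in B$ with $x_1\neq x_2$. By convexity of $B$, the entire segment $\{(\lambda x_1+(1-\lambda)x_2,\,y):\lambda\in[0,1]\}$ lies in $B$. Every point on this segment has $y$-component in $\{0,1\}^d$, so the projection on the $x$-coordinate of $\{(x,y)\in B:y\in\{0,1\}^d\}$ contains the whole interval $[\min\{x_1,x_2\},\max\{x_1,x_2\}]$. This contradicts \eqref{eq:binarization}, which forces this projection to equal the finite set $\{0,\dots,k\}$.

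\textbf{Range.} If $(x,y)\in B$ with $y\in\{0,1\}^d$, then $x$ lies in the projection $\pi_x(\{(x,y)\in B:y\in\{0,1\}^d\})$, which by definition of binarization equals $\{0,\dots,k\}$. So $x\in\{0,\dots,k\}$ is immediate.

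\textbf{Vertex.} To show $(x,y)\in V(B)$, I would write $(x,y)=\tfrac12((x',y')+(x'',y''))$ for some $(x',y'),(x'',y'')\in B$ and argue that $(x',y')=(x'',y'')=(x,y)$. The key observation is that every coordinate of $y$ equals either $0$ or $1$, so it is an extreme point of $[0,1]$. Since $y',y''\in[0,1]^d$ and $y=\tfrac12(y'+y'')$, coordinate by coordinate we must have $y'=y''=y$. Now $(x',y)$ and $(x'',y)$ both lie in $B$, and uniqueness (just proved) forces $x'=x''=x$. Hence $(x,y)$ is not a proper convex combination of two points of $B$ and therefore is a vertex.

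\textbf{Expected difficulty.} None of the three parts poses a serious obstacle: the only subtlety is in getting the logical order right, since the vertex claim relies on uniqueness, and uniqueness relies on the definition of binarization being phrased in terms of the $x$-projection (rather than, say, inside the $(x,y)$-space). The argument is essentially a routine use of convexity plus the fact that binary vectors are extreme in $[0,1]^d$.
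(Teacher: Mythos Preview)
Your proposal is correct and follows essentially the same argument as the paper: uniqueness via convexity producing a non-integer $x$, the range claim read off directly from \eqref{eq:binarization}, and the vertex claim by observing that $y\in\{0,1\}^d$ forces $y'=y''=y$ in any convex decomposition, whereupon uniqueness finishes. The only cosmetic difference is that the paper phrases the vertex part contrapositively (``if not a vertex, then in the relative interior of a segment''), while you argue directly via a midpoint decomposition.
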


\begin{proof}   The first statement and the fact that  $x\in \{0,\dots,k\}$  follow from \eqref{eq:binarization}, because if $B$ contains $(x^1,y)$ and $(x^2,y)$ with $x^1\ne x^2$, then $B$ contains $(x^*,y)$ with $x^*\not\in \Z$. 
To show the last statement, assume that a point $(x,y)\in B$ with $y\in \{0,1\}^d$ is not a vertex of $B$. Then $(x,y)$ is in the relative interior of a segment with endpoints $(x^1,y^1)$ and $(x^2,y^2)$ in $B$.  Since $y\in \{0,1\}^d$, $y^1=y^2=y$. Again, the above argument shows that this is possible only if $x^1=x^2=x$.
\end{proof}

Given a polytope $P\subseteq [0,k]^p\times \R^{n-p}$, with variables $x_1,\dots,x_n$, for every $i\in[p]$ let $B_i\subseteq\{(x_i,y_i):(x_i,y_i)\in \R\times [0,1]^d\}$ be a binarization of $x_i$. We denote the components of $y_i$ by $y_{i_1},\dots,y_{i_d}$.
 The \emph{binary extended formulation} $Q$ of $P$ with binarizations $B_1,\dots, B_p$ is 
$$
Q:=\{(x,y)\in \R^n\times [0,1]^{pd}: x\in P,\, (x_i,y_i) \in B_i \:\forall i\in [p]\}.
$$
Notice that $Q$ is an extended formulation of $P$, i.e., $\pi_x(Q)=P$. Since $\pi_{x_i}(\{(x_i,y_i)\in B_i:y_i\in \{0,1\}^d\})=\{0,\dots,k\}$ for $i\in [p]$, we have that
$$\{x\in P:x_i\in \{0,\dots,k\}\:\forall i\in[p]\}=\pi_x\left(\{(x,y)\in Q: y_{i}\in\{0,1\}^d\: \forall i\in [p]\}\right).$$
Thus the mixed-integer set in the $x$-space on the left-hand side is lifted to a mixed $0/1$-set in an extended space.

The above observation, together with Theorem \ref{thm:balas}, implies that by applying sequential convexification to the $y$ variables of $Q$ one obtains the mixed integer hull of $P$ (with respect to variables $x_1,\dots, x_p$). In particular, we have the following persistency property:

\begin{observation}\label{obs-persistency} 
Given a polytope $P\subseteq [0,k]^p\times \R^{n-p}$ and a binary extended formulation $Q$ of $P$ 
with binarizations $B_i\subseteq\{(x_i,y_i):(x_i,y_i)\in \R\times [0,1]^d\}$ for $i\in [p]$, consider a variable $y_{\bar{\imath}\bar{\jmath}}$. If no point in $V(Q)$ (equivalently, in $\pi_x(V(Q))$) 
satisfies  $\alpha <x_{\tilde \imath}<\alpha +1$ for some $\tilde{\imath}\in [p]$ and $\alpha\in\Z$, then no point in $
V(Q_{y_{\bar{\imath}\bar{\jmath}}})$ (equivalently, in $\pi_x(
V(Q_{y_{\bar{\imath}\bar{\jmath}}}))$) satisfies  $\alpha <x_{\tilde\imath}<\alpha +1$. 
\end{observation}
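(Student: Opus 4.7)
The plan is to reduce this observation to the key structural fact already established inside the proof of Theorem \ref{thm:balas}, namely that convexifying a polytope contained in a 0/1-slab with respect to a coordinate axis can only delete vertices, never create new ones. Specifically, since $y_{\bar\imath\bar\jmath}\in[0,1]$ on $Q$, the two sets $F_0:=\{(x,y)\in Q:y_{\bar\imath\bar\jmath}=0\}$ and $F_1:=\{(x,y)\in Q:y_{\bar\imath\bar\jmath}=1\}$ are faces of $Q$, so $V(F_0)\cup V(F_1)\subseteq V(Q)$. By definition $Q_{y_{\bar\imath\bar\jmath}}=\conv(F_0\cup F_1)$, and any vertex of the convex hull of a union of polytopes must be a vertex of one of those polytopes. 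Hence $V(Q_{y_{\bar\imath\bar\jmath}})\subseteq V(F_0)\cup V(F_1)\subseteq V(Q)$.

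From this set containment the persistency statement is immediate: if no vertex of $Q$ has $\alpha<x_{\tilde\imath}<\alpha+1$, then the same holds a fortiori for the subset $V(Q_{y_{\bar\imath\bar\jmath}})$. The equivalent formulations in terms of $\pi_x(V(Q))$ and $\pi_x(V(Q_{y_{\bar\imath\bar\jmath}}))$ are trivial reformulations, since the condition $\alpha<x_{\tilde\imath}<\alpha+1$ only involves the $x$-coordinates.

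I do not anticipate a genuine obstacle here; the statement is essentially a direct corollary of the vertex-level argument already used to prove Theorem \ref{thm:balas}. The only small point to be careful about is to verify that the inclusion $V(Q_{y_{\bar\imath\bar\jmath}})\subseteq V(Q)$ used above really follows from the same reasoning applied in that theorem's proof (where it was stated for the first coordinate of a $[0,1]^h\times\R^{n-h}$ polytope): in our setting we simply regard $Q$ as sitting in the slab $\R^n\times[0,1]^{pd}$ and single out the coordinate $y_{\bar\imath\bar\jmath}$, so the argument goes through verbatim. Thus the observation will follow in one short paragraph.
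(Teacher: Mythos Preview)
Your proposal is correct and follows exactly the same approach as the paper: the paper's proof is the one-line observation that $Q_{y_{\bar\imath\bar\jmath}}$ is the convex hull of two faces of $Q$, hence $V(Q_{y_{\bar\imath\bar\jmath}})\subseteq V(Q)$ and therefore $\pi_x(V(Q_{y_{\bar\imath\bar\jmath}}))\subseteq \pi_x(V(Q))$. You have simply spelled out the intermediate step $V(Q_{y_{\bar\imath\bar\jmath}})\subseteq V(F_0)\cup V(F_1)\subseteq V(Q)$ a bit more explicitly.
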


\begin{proof}  Since  $Q_{y_{\bar{\imath}\bar{\jmath}}}$ is the convex hull of two faces of $Q$, we have that $V(Q_{y_{\bar{\imath}\bar{\jmath}}})\subseteq V(Q)$. It follows that   $\pi_x(V(Q_{y_{\bar{\imath}\bar{\jmath}}}))\subseteq \pi_x(V(Q))$, and this proves the observation. \end{proof}

Indeed, the above holds for any hereditary property $\mathcal P$, where ``hereditary'' means: ``If $S\subseteq V(Q)$ satisfies  $\mathcal P$ and $S'
\subseteq S$, then $S'$ satisfies  $\mathcal P$''. We stress that the convergence to the mixed integer hull of $P$ implied by Observation \ref{obs-persistency} is not always possible in the original space, as the example at the end of Section \ref{sec:seq-conv} shows.

We remark that, since different variables $x_i$ can have different ranges and different binarizations, one can consider binary extended formulations where each binarization $B_i$ has a different dimension $d_i$. For simplicity of notation, we stick to the case where $d_i=d$ for each $i\in[p]$. However, all our results, with straightforward modifications, hold in the general setting, as well.




\subsection{Properties of binarizations}\label{sec:properties}
In this section we define several properties that a binarization can have, and some specific types of binarizations that have been used in the literature and are investigated further in this paper. Some of the properties below have been defined in \cite{dash2018binary}.

In the following we assume that $B\subseteq\{(x,y):(x,y)\in\R\times[0,1]^d\}$ is a binarization of $x$ in the range $\{0,\dots,k\}$.

\begin{definition}\label{def:natural}
A binarization $B$  is \emph{natural} if $x$ is integer for any vertex $(x,y)$ of $B$.  
\end{definition}

In other words, a binarization $B$ is natural if and only if $\pi_x(V(B))=\{0,\dots,k\}$. 

\begin{definition}
A binarization $B$ is \emph{integral} if $B=\conv(B\cap (\Z\times \{0,1\}^d))$.
\end{definition}

It follows from the definition of binarization that the above condition is equivalent to $B=\conv(B\cap (\R\times \{0,1\}^d))$.

\begin{definition}
A binarization $B$ is \emph{exact} if for every $x\in \{0,\dots,k\}$ there exists exactly one $y\in \{0,1\}^d$ such that  $(x,y)$ in $B$.
\end{definition}

\begin{observation}\label{obs:twox}
Let $B$ be an exact binarization. Then $B$ is natural if and only if  $B$  is integral.
\end{observation}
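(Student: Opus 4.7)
The plan is to prove the two directions of the biconditional separately, using Observation~\ref{obs:onex} as the main workhorse.

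For the forward direction ($B$ integral $\Rightarrow$ $B$ natural), the exactness hypothesis is not actually needed. If $B=\conv(B\cap(\R\times\{0,1\}^d))$, then $V(B)\subseteq B\cap(\R\times\{0,1\}^d)$, so every vertex of $B$ has $y\in\{0,1\}^d$. Observation~\ref{obs:onex} then guarantees that any such point has $x\in\{0,\dots,k\}$; in particular $x$ is integer at every vertex, so $B$ is natural.

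For the reverse direction ($B$ natural $\Rightarrow$ $B$ integral, assuming $B$ exact), exactness plays the central role. Combining Observation~\ref{obs:onex} with exactness, I would first record the structure of the 0/1 part of $B$: the set $S:=B\cap(\R\times\{0,1\}^d)$ consists of exactly $k+1$ points $(0,\bar y_0),\dots,(k,\bar y_k)$ with $\bar y_i\in\{0,1\}^d$, and each is a vertex of $B$. It then suffices to prove $V(B)\subseteq S$, since the reverse inclusion is immediate and this yields $B=\conv V(B)=\conv S=\conv(B\cap(\R\times\{0,1\}^d))$, i.e., integrality.

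To show $V(B)\subseteq S$, take an arbitrary $(x^*,y^*)\in V(B)$. Naturality in its equivalent form $\pi_x(V(B))=\{0,\dots,k\}$, recorded immediately after Definition~\ref{def:natural}, gives $x^*\in\{0,\dots,k\}$, so exactness singles out $\bar y_{x^*}$ as the unique 0/1 vector with $(x^*,\bar y_{x^*})\in B$. The crux of the proof is concluding that $y^*=\bar y_{x^*}$: otherwise, $(x^*,y^*)$ and $(x^*,\bar y_{x^*})$ would be two distinct points of $B$ in the same slice $\{x=x^*\}$, one 0/1 and the other with a fractional coordinate. The main obstacle I anticipate is ruling out such a configuration: the intended argument is that exactness, together with the identification of $S$ via Observation~\ref{obs:onex}, pins down the slice of $B$ at each integer $x\in\{0,\dots,k\}$ to the single point $\bar y_{x}$, so that $y^*=\bar y_{x^*}$ is forced and $(x^*,y^*)\in S$. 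Once this is established for every vertex, integrality follows at once.
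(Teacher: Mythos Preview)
Your forward direction (integral $\Rightarrow$ natural) is correct, and you rightly note that exactness is not needed there. For the reverse direction your plan mirrors the paper's exactly: take a vertex $(x^*,y^*)$, use naturality to get $x^*\in\{0,\dots,k\}$, and then invoke exactness to force $y^*\in\{0,1\}^d$. The paper's entire argument for this direction is the single sentence ``since $B$ is exact, $y$ is unique and therefore $y\in\{0,1\}^d$.''

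The step you correctly flag as ``the main obstacle'' is, however, a genuine gap, and your proposed resolution does not work. Exactness (as defined) only says there is a unique $y\in\{0,1\}^d$ with $(x^*,y)\in B$; it does \emph{not} force the slice $B\cap\{x=x^*\}$ to be a single point---for instance $B^U(d)$ is exact, yet its slice at any integer $i$ is a simplex, not a point. In fact the implication ``exact $+$ natural $\Rightarrow$ integral'' fails under the literal definitions: for $d=k=1$ take $B=\conv\{(0,0),(1,1),(0,\tfrac12)\}\subseteq\R\times[0,1]$. Its $0/1$ part is $\{(0,0),(1,1)\}$, so $B$ is a binarization and is exact; all three vertices have integer $x$, so $B$ is natural; but $B$ is a triangle while $\conv(B\cap(\R\times\{0,1\}))$ is only a segment, so $B$ is not integral. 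The paper's proof glosses over the very same step; it appears to read ``exact'' as the stronger condition that $B\cap\{x=j\}$ is a singleton for every $j\in\{0,\dots,k\}$, under which both your argument and the paper's would go through.
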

\begin{proof} If  $B$ is integral, then $B$ is obviously natural. For the converse,  let $(x,y)$ be a vertex of $B$, where $B$ is exact and natural. As $B$ is natural, $x\in \Z$. Since $B$ is exact,  $y$ is unique and therefore $y\in \{0,1\}^d$.
\end{proof}


If $B$  is a  binarization which is exact and natural (or equivalently exact and integral), we say that $B$ is {\em perfect}.
 Notice that if $B$ is integral, then $B$ is natural but may not be exact. 

\begin{definition}\label{def:affine}
A binarization $B$ is \emph{affine} if $x=f(y)$ for all $(x,y)\in B$, where $f: \R^d\rightarrow \R$ is an affine function.
\end{definition}

We recall some well-known binarizations that we have already mentioned in the introduction:
$$B^U(d)=\{(x,y)\in \R\times [0,1]^d: x= \textstyle\sum_{i=1}^d y_i,\, 1\geq y_1\geq \dots \geq y_d\geq 0\};$$
$$B^F(d)=\{(x,y)\in \R\times [0,1]^d: x= \textstyle\sum_{i=1}^d i\cdot y_i, \, \textstyle\sum_{i=1}^d y_i\leq 1\};$$
$$B^L(d)=\{(x,y)\in \R\times [0,1]^d: x=\textstyle\sum_{i=1}^{d} 2^{i-1}y_i \}.$$
$B^U(d)$ is known as the \emph{unary} binarization; $B^F(d)$ is the \emph{full} binarization; $B^L(d)$ is the \emph{logarithmic} (or compact) binarization. Notice that in $B^U(d)$ and $B^F(d)$ the number of $y$ variables $d$ is also the upper bound of the range of the $x$ variable (i.e., $x\in\{0,\dots,d\}$), whereas in $B^L(d)$ the range of $x$ is $\{0,\dots, 2^d-1\}$. Hence $B^L(d)$ uses a logarithmic number of variables with respect to the length of the range of $x$ and, as defined here, requires $x$ to be a power of 2. The general case of an arbitrary range of $x$, also studied in \cite{dash2018binary}, is treated in Section \ref{sec:trunc}.
We remark that $B^U(d)$, $B^F(d)$ and $B^L(d)$ are all  perfect binarizations (hence natural, integral and exact) and are all affine.



\section{The vertices of a binary extended formulation}\label{sec:vertices}

Let $Q$ be a binary extended formulation of a polytope $P\subseteq [0,k]^p\times \R^{n-p}$. We now  describe the vertices of $Q$ and their projections on the original space, provided that the associated binarizations are natural. 


 Given 
$I\subseteq [p]$  and $\alpha\in \{0,\dots,k\}^{I}$, we define the subspace $G_{I,\alpha}:=\{x\in \R^n:x_i=\alpha_{i}\:\forall i\in I\}$.

\begin{theorem}\label{thm:verticesQ}
 Let $P\subseteq [0,k]^p\times \R^{n-p}$ be a polytope and let $Q\subseteq\R^n\times [0,1]^{pd}$ be a binary extended formulation of $P$ with binarizations $B_i\subseteq \R\times [0,1]^d$ for $i\in [p]$ that are natural. 
Then $(\bar{x},\bar{y}) \in \R^n\times [0,1]^{pd}$ is a vertex of $Q$ if and only if there exist a subspace $G_{I,\alpha}$ and a face $F$ of $P$ of dimension $|I|$ such that:
\begin{enumerate}[i)]
\item $F\cap G_{I,\alpha}=\{\bar{x}\};$
\item $(\bar{x}_i,\bar{y}_i)\in V(B_i)\:\forall i\in I$;
\item $(\bar{x}_i,\bar{y}_i)\in V(B_i\cap\{x_i=\bar{x}_i\}) \:\forall i\in [p]\setminus I$.
\end{enumerate}
\end{theorem}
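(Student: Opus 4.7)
The plan is to prove both implications, starting with the cleaner $(\Leftarrow)$ direction. Suppose $I,\alpha,F$ satisfy i)--iii) and consider a convex representation $(\bar{x},\bar{y})=\lambda(x^1,y^1)+(1-\lambda)(x^2,y^2)$ with $(x^j,y^j)\in Q$ and $\lambda\in(0,1)$. For each $i\in I$, condition ii) and the vertex property in $B_i$ force $(x^j_i,y^j_i)=(\bar{x}_i,\bar{y}_i)$, so $x^j\in P\cap G_{I,\alpha}$ for $j=1,2$. Since $F$ is a face of $P$ containing $\bar{x}$, a supporting linear functional $c$ for $F$ yields $c\cdot\bar{x}=\lambda c\cdot x^1+(1-\lambda)c\cdot x^2\le\max_P c=c\cdot\bar{x}$, forcing $x^1,x^2\in F$; combined with $x^j\in G_{I,\alpha}$, condition i) gives $x^j=\bar{x}$. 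Then for $i\notin I$, $(\bar{x}_i,y^j_i)\in B_i\cap\{x_i=\bar{x}_i\}$ and iii) forces $y^j_i=\bar{y}_i$, so $(\bar{x},\bar{y})\in V(Q)$.

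For the $(\Rightarrow)$ direction, I would first establish a \emph{fiber-vertex} claim: for $(\bar{x},\bar{y})\in V(Q)$ and every $i\in[p]$, $(\bar{x}_i,\bar{y}_i)\in V(B_i\cap\{x_i=\bar{x}_i\})$, via the standard swap (any nontrivial decomposition of $\bar{y}_i$ in the fiber lifts, by keeping all other coordinates fixed, to a nontrivial decomposition of $(\bar{x},\bar{y})$ in $Q$). Next I would analyze the lineality at $(\bar{x},\bar{y})$. Let $F^*$ be the minimal face of $P$ containing $\bar{x}$ and $E_i^*$ the minimal face of $B_i$ containing $(\bar{x}_i,\bar{y}_i)$, and set $I^*:=\{i:(\bar{x}_i,\bar{y}_i)\in V(B_i)\}$, $\alpha_i^*:=\bar{x}_i$ for $i\in I^*$. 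A direction $(u,v)$ is lineality at $(\bar{x},\bar{y})$ iff $u\in\text{lin}(F^*)$ and $(u_i,v_i)\in\text{lin}(E_i^*)$ for each $i$. For $i\in I^*$, $\text{lin}(E_i^*)=\{0\}$ forces $u_i=0$; for $i\notin I^*$ the fiber-vertex claim trivializes the ``$u_i=0$'' slice of $\text{lin}(E_i^*)$, so $\dim E_i^*=1$ and $v_i$ is a linear function of $u_i$. Hence $(\bar{x},\bar{y})\in V(Q)$ is equivalent to $\text{lin}(F^*)\cap\text{lin}(G_{I^*,\alpha^*})=\{0\}$, and in particular $\pi_{I^*}|_{\text{lin}(F^*)}$ is injective, so $\dim F^*\le|I^*|$.

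Finally I would construct $(I,\alpha,F)$: by extracting a nonsingular $(\dim F^*)\times(\dim F^*)$ submatrix from the injective map $\pi_{I^*}|_{\text{lin}(F^*)}$, pick $I\subseteq I^*$ of size $\dim F^*$ such that $\pi_I|_{\text{lin}(F^*)}$ is an isomorphism onto $\R^I$, and set $\alpha_i:=\bar{x}_i$ for $i\in I$ and $F:=F^*$. Then $\dim F=|I|$; the direct-sum decomposition $\text{lin}(F)\oplus\text{lin}(G_{I,\alpha})=\R^n$ gives $F\cap G_{I,\alpha}=\{\bar{x}\}$, proving i). Condition ii) is immediate since $I\subseteq I^*$, and iii) holds for $i\notin I$ either by the fiber-vertex claim (if $i\notin I^*$) or because a vertex of $B_i$ is automatically a vertex of the smaller set $B_i\cap\{x_i=\bar{x}_i\}$ (if $i\in I^*\setminus I$). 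The main subtlety is precisely this last construction: the natural guess $I=I^*$ would require a face of $P$ of dimension $|I^*|$ with the correct transversality to $G_{I^*,\alpha^*}$, which need not exist (e.g.\ whenever $\dim P<|I^*|$, as happens when $P$ lies in a proper affine subspace); shrinking $I^*$ to match $\dim F^*$ via the submatrix trick sidesteps this and allows $F=F^*$.
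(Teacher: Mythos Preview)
Your proof is correct and complete, but it follows a genuinely different route from the paper's. The paper fixes explicit inequality descriptions of $P$ and of each $B_i$ and argues by \emph{basis counting}: for the ``only if'' direction it chooses a basis $\mathcal{B}$ of $Q$ at $(\bar{x},\bar{y})$ containing $n-\dim F^*$ tight constraints from $P$, observes that each $B_i$ contributes at least $d$ tight constraints (and exactly $d$ when $\bar{x}_i$ is fractional, by naturalness), and then a cardinality count forces $|I|=\dim F^*$ indices to contribute $d+1$ constraints, i.e.\ to satisfy $(\bar{x}_i,\bar{y}_i)\in V(B_i)$. The ``if'' direction likewise assembles an explicit basis and checks linear independence.

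Your argument is intrinsic: you never fix an inequality system, working instead with the lineality (feasible-direction) space at $(\bar{x},\bar{y})$ and the minimal faces $F^*$ and $E_i^*$. The key translation is that the paper's constraint-counting identity $|\mathcal{B}|=n+pd$ becomes, in your language, the dimension identity $\dim\operatorname{lin}(F^*)\le|I^*|$ obtained from injectivity of $\pi_{I^*}|_{\operatorname{lin}(F^*)}$; your ``submatrix trick'' to extract $I\subseteq I^*$ of size $\dim F^*$ plays exactly the role of the paper's selection of which $B_i$'s contribute the extra constraint. Your $(\Leftarrow)$ via a supporting functional and convex combinations is cleaner than the paper's basis assembly, and avoids the somewhat delicate independence check there. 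One small point worth making explicit: when you set $\alpha_i:=\bar{x}_i$ for $i\in I\subseteq I^*$, you should invoke naturalness of $B_i$ to conclude $\bar{x}_i\in\{0,\dots,k\}$ (since $(\bar{x}_i,\bar{y}_i)\in V(B_i)$); this is the only place your $(\Rightarrow)$ argument needs the hypothesis, matching the paper's remark that the ``if'' direction holds for arbitrary binarizations.
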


    \begin{proof}
    We work with the linear inequality description of $Q$ given by some fixed linear inequality descriptions of $P$ and $B_i$, for $i\in [p]$.\medskip
     
     We first prove the ``only if part''.
        Given $(\bar{x},\bar{y})\in V(Q)$, define $\tilde{I}=\{i\in [p]:\bar x_i\in  \{0,\dots,k\}\}$, and let $F$ be the face of $P$ of minimum dimension containing $\bar{x}$. We will show that $(\bar{x},\bar{y})$ satisfies i)--iii) by using $F$ and $G_{I,\alpha}$ for an appropriate subset $I$ of $\tilde{I}$.
        
        Let $q$ be the dimension of $F$, and consider a basis $\B$ of $Q$ defining $(\bar{x}, \bar{y})$ that contains $n-q$   tight inequalities from the system defining $P$. Such a basis exists because we chose $F$ of minimum dimension.

        Notice that
    $\B$ contains at least $d$ tight inequalities  from the system defining each $B_i$ for $i\in [p]$, as otherwise some variable $y_{ij}$ of $B_i$ would not be ``fixed'' by $\B$. Furthermore, we claim that
    $(\bar{x}_i,\bar{y}_i)\in  V(B_i\cap\{x_i=\bar{x}_i\})$ for each $i\in[p]$: Indeed, consider a convex combination of points $(\bar{x}_i, y^{(1)}_i), \dots, (\bar{x}_i, y^{(t)}_i)\in V(B_i\cap\{x_i=\bar{x}_i\})$ which is equal to $(\bar{x}_i,\bar{y}_i)$. Then one can obtain $(\bar{x},\bar{y})$ as a convex combination of points $(\bar{x},\bar{y}^{(1)}), \dots, (\bar{x},\bar{y}^{(t)})$ where each $\bar{y}^{(j)}$ is equal to $y^{(j)}$ in coordinates $y_i$, and to $\bar{y}$ in  all other coordinates. Hence, the fact that $(\bar{x},\bar{y})\in V(Q)$ proves our claim.
    
    Moreover, for $i\notin \tilde{I}$, $\B$ contains exactly $d$ tight inequalities 
    from the system defining $B_i$: indeed, it cannot contain $d+1$ tight inequalities as, $\bar{x}_i$ being fractional and $B_i$ being natural, $(\bar{x}_i,\bar{y}_i)$ cannot be a vertex of $B_i$. But then, since $|\B|=n+pd$, there is a subset $I\subseteq \tilde{I}$ of size $q$ such that, for $i\in I$, $\B$ contains $d+1$ tight inequalities from the system defining $B_i$, i.e., $(\bar{x}_i,\bar{y}_i)\in V(B_i)$. This also implies that for every $i\in I$ the equation $x_i=\bar{x}_i$ is linearly independent from the equations defining $F$. Hence  $F\cap G_{I,\alpha}=\{\bar{x}\}$, where $\alpha_i=\bar{x}_i$ for $i\in I$.      This proves that $(\bar{x},\bar{y})$ satisfies i)--iii).\medskip
    
    We now prove the ``if'' part.
    Let $(\bar{x},\bar{y})$ satisfy i)--iii) for some $F, I,\alpha$. Notice that $(\bar{x},\bar{y})\in Q$, and it satisfies at equality the following set $\B$ of inequalities from the system defining $Q$: $n-|I|$ linearly independent inequalities defining $F$; $d+1$ linearly independent inequalities from each $B_i$ with $i\in I$; $d$ linearly independent inequalities from each $B_i$ with $i\in[p]\setminus I$. In order to conclude that $(\bar{x},\bar{y})\in V(Q)$, it suffices to show that all these tight inequalities are linearly independent (implying that $\B$ is a basis). Now, since the $B_i$'s have disjoint sets of variables, if $\B$ is not a basis then there is an equation among those defining $F$ that is linearly dependent with some others. However notice that, since $ F\cap G_{I,\alpha}=\{\bar{x}\}$, the system of equations defining $F$ is equivalent to the system $x_i=\bar{x}_i$ for $i\in [p]\setminus I$. Hence, if $\B$ is not a basis, there is $i \in[p]\setminus I$ such that the equation $x_i=\bar{x}_i$ is linearly dependent with the equations of $\cal B$ picked from $B_i$. But this contradicts the fact that $(\bar{x}_i,\bar{y}_i)\in V(B_i\cap\{x_i=\bar{x}_i\})$. 
    \end{proof}

\begin{theorem}\label{thm:vertices}
Let $P\subseteq [0,k]^p\times \R^{n-p}$ be a polytope and let $Q$ be a binary extended formulation of $P$ with binarizations $B_i\subseteq \R\times [0,1]^d$ for $i\in [p]$ that are natural. Then $\bar{x}\in \R^n$ is a point in  $\pi_x(V(Q))$ if and only if there exist a subspace $G_{I,\alpha}$ and a face $F$ of $P$ of dimension $|I|$ such that
$F\cap G_{I,\alpha}=\{\bar{x}\}$.

\end{theorem}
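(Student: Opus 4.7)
The plan is to derive this theorem as an essentially immediate corollary of Theorem~\ref{thm:verticesQ}, which already characterizes the vertices of $Q$. Both directions reduce to checking that the $y$-coordinates of such a vertex exist whenever the $x$-coordinate satisfies the stated geometric condition.

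For the ``only if'' direction, suppose $\bar{x}\in\pi_x(V(Q))$, so there is $\bar{y}$ with $(\bar{x},\bar{y})\in V(Q)$. Then Theorem~\ref{thm:verticesQ} yields a face $F$ of $P$ of dimension $|I|$ and $\alpha\in\{0,\dots,k\}^I$ with $F\cap G_{I,\alpha}=\{\bar{x}\}$, which is exactly what is required.

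For the ``if'' direction, suppose such $F,I,\alpha$ exist. I would construct $\bar{y}\in[0,1]^{pd}$ so that $(\bar{x},\bar{y})$ satisfies conditions ii) and iii) of Theorem~\ref{thm:verticesQ}; invoking that theorem will then conclude $(\bar{x},\bar{y})\in V(Q)$, hence $\bar{x}\in\pi_x(V(Q))$. For $i\in I$, the condition $F\cap G_{I,\alpha}=\{\bar{x}\}$ forces $\bar{x}_i=\alpha_i\in\{0,\dots,k\}$; since $B_i$ is natural, $\pi_{x_i}(V(B_i))=\{0,\dots,k\}$, so we may choose $\bar{y}_i$ with $(\bar{x}_i,\bar{y}_i)\in V(B_i)$. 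For $i\in[p]\setminus I$, we have $\bar{x}\in F\subseteq P\subseteq[0,k]^p\times\R^{n-p}$, so $\bar{x}_i\in[0,k]$. Because $B_i$ is a polytope whose vertices have $x_i$-coordinates in $\{0,\dots,k\}$, we get $\pi_{x_i}(B_i)=[0,k]$, so the slice $B_i\cap\{x_i=\bar{x}_i\}$ is a nonempty polytope and admits a vertex $(\bar{x}_i,\bar{y}_i)$.

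There is no real obstacle here; the content lies in Theorem~\ref{thm:verticesQ}, and the only point requiring a moment's care is verifying that the slices and vertex sets used to define $\bar{y}_i$ are nonempty, which is precisely what naturalness of each $B_i$ guarantees (together with the inclusion $P\subseteq[0,k]^p\times\R^{n-p}$).
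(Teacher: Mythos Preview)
Your proof is correct and follows essentially the same approach as the paper: both directions are reduced directly to Theorem~\ref{thm:verticesQ}, and for the ``if'' direction you construct $\bar y_i$ exactly as the paper does. One small difference worth noting: for $i\in I$ the paper invokes Observation~\ref{obs:onex} (which holds for any binarization) rather than naturalness, and it later remarks that the ``if'' direction therefore does not require the binarizations to be natural---your argument is valid under the hypotheses given, but it uses naturalness where it is not strictly needed.
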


\begin{proof}
The ``only if''part follows from Theorem \ref{thm:verticesQ}. 

Assume now $F\cap G_{I,\alpha}=\{\bar{x}\}$. For $i\in I$, since $\bar{x}_i\in \{0,\dots,k\}$, by Observation \ref{obs:onex} there exists $\bar{y}_i\in \{0,1\}^{d}$ such that $(\bar{x}_i,\bar{y}_i)\in V(B_i)$. For $i\in[p]\setminus I$, $\bar{x}_i$ can be obviously extended to a point in  $V(B_i\cap\{x_i=\bar{x}_i\})$.  Therefore the ``if'' part also follows from Theorem \ref{thm:verticesQ}.
\end{proof}

Here are some consequences of the above theorems.
\begin{itemize}
\item The proof of the ``if'' parts in Theorems \ref{thm:verticesQ} and \ref{thm:vertices}  does not use the fact that the binarizations are natural, hence these implications hold for any binary extended formulation. However, the assumption that the binarizations are natural is needed for the ``only if'' parts, as Remark \ref{rem:nonnatural} in Section \ref{sec:example} shows. 
\item Given a polytope $P$ and a binary extended formulation $Q$ of $P$ as in Theorem \ref{thm:verticesQ},  the set $V(Q)$ (and its cardinality)  is a function of the associated binarizations $B_1,\dots, B_p$. However Theorem \ref{thm:vertices} implies that the projection of $V(Q)$ on the $x$-space is independent of the  binarizations, as long as they are natural. 
    \item     In particular, Theorem \ref{thm:vertices} implies that $\pi_x(V(Q))$ always contains $V(P)$ because given  $\bar x\in V(P)$, one can choose $I=\emptyset$ and $F=\{\bar x\}$. Moreover, when $p=n$,  $\pi_x(V(Q))$ also contains $\{x\in P: x_i\in \{0,\dots,k\} \:\forall i\in[n]\}$. This is because given $\bar x\in \{x\in P: x_i\in \{0,\dots,k\}$ one can choose $F=P$, $I\subseteq[n]$ such that $|I|=\dim(P)$ and $P\cap G_{I,\alpha}$ contains a single point (there must be such a subset $I$), and $\alpha_i=\bar{x}_i$ for all $i\in I$.
\end{itemize}

\subsection{An example}\label{sec:example}

 As a canonical disjunction on a binary variable is a split disjunction, the split rank of a polytope is less than or equal to the lift-and-project rank (we refer, e.g., to \cite{conforti2014integer} for precise definitions).

While the split rank of a rational polyhedron is always finite when all variables are required to be integer,
Cook, Kannan and Schrijver \cite{cook-kannan-schrijver} provided a simple example showing that this is not always the case in the mixed-integer setting. Indeed, the split rank of the polyhedron 
\begin{equation}\label{eq:pyramid}
P=\{(x_1,x_2,x_3)\in [0,2]^2\times \R:  hx_1+hx_2+x_3\leq 2h,\:
    x_3\leq 2h x_1,\:
    x_3\leq 2h x_2,\:
    x_3\ge 0
     \},
\end{equation}
where $h>0$, is infinite if only $x_1$ and $x_2$ are required to be integer. It is also known that the split rank of $P$ is $\Omega(\log h)$ when all the variables are restricted to be integer \cite{reverse-chatal}.
Here we use the above example to illustrate Theorems \ref{thm:verticesQ} and \ref{thm:vertices} on a binary extended formulation that uses natural binarizations, and to demonstrate the convergence of sequential convexification.

We have $V(P)=\left\{(0,0,0), (2,0,0),(0,2,0),(\frac{1}{2},\frac{1}{2},h)\right\}$.
We let $p=d=2$  and apply the unary binarization $B^U(2)$, which is natural: For $i=1,2$, let $B_i=\{(x_i,y_{i1},y_{i2})\in \R\times [0,1]^2: x_i=y_{i1}+y_{i2},\,  y_{i1}\geq y_{i2}\}$. Denoting by $Q$ the corresponding binary extended formulation and applying Theorem \ref{thm:vertices}, we find that $\pi_x(V(Q))$ consists of the points that are the unique elements in sets of the form $F\cap G_{I,\alpha}$, where $F$ is a $d$-face of $P$, $|I|=d$, and $\alpha\in\{0,1,2\}^I$ for some $d\in\{0,1,2\}$. In particular:
\begin{itemize}
\item  for $d=0$, we obtain the four vertices of $P$;
\item for $d=1$, we obtain again the three integer vertices of $P$, along with the points $(1,0,0)$, $(0,1,0)$, $(1,1,0)$, $(1,\frac13,\frac23h)$, $(\frac13,1,\frac23h)$; 
\item for $d=2$, we obtain again the six integer points of $P$.
\end{itemize}
Thus we have
\[
\pi_x(V(Q))=\left\{(0,0,0), (2,0,0),(0,2,0), \left(\frac{1}{2},\frac{1}{2},h\right), (1,0,0),(0,1,0), (1,1,0), \left(1,\frac{1}{3},\frac{2}{3}h\right), \left(\frac{1}{3},1,\frac{2}{3}h\right)\right\}.
\]
See Figure \ref{fig:pyramid}.

$V(Q)$ consists of the following points:
\[\begin{array}{c|ccccccc}
&x_1&x_2&x_3&y_{11}&y_{12}&y_{21}&y_{22}\\
\hline
v^1& 0 &  0 &   0 &  0 &  0 &  0 &  0 \\
v^2& 2 &  0 &   0 &  1 &  1 &  0 &  0 \\
v^3& 0 &  2 &   0 &  0 &  0 &  1 &  1 \\
v^4& 1/2 &1/2 & h &1/2 &  0 &1/2 &  0 \\
v^5& 1/2 &1/2 & h &1/2 & 0 &1/4 &1/4 \\
v^6& 1/2 &1/2 & h &1/4 &1/4 &1/2 &  0 \\
v^7& 1/2 &1/2 & h &1/4 &1/4 &1/4 &1/4 \\
v^8& 1 &  0 &   0 &  1 &  0 &  0 &  0 \\
v^9& 0 &  1 &   0 &  0 &  0 &  1 &  0 \\
v^{10}& 1 &  1  &  0 &  1 &  0 &  1 &  0 \\
v^{11}& 1 &  1 &   0 &1/2 &1/2 &  1 &  0 \\
v^{12}& 1 &  1 &   0 &  1 &  0 &1/2& 1/2 \\
v^{13}& 1& 1/3& 2h/3 &  1 &  0& 1/3 &  0 \\
v^{14}& 1& 1/3& 2h/3  & 1  & 0 &1/6& 1/6 \\
v^{15}& 1/3 &  1& 2h/3& 1/3 &  0 &  1 &  0 \\
v^{16}& 1/3 &  1& 2h/3& 1/6& 1/6 &  1 &  0
\end{array}\]


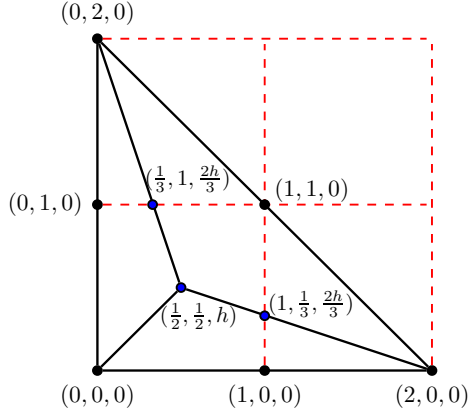
\begin{figure}
    \centering
\scalebox{1.1}{\begin{tikzpicture}
\tikzstyle{novtx}=[inner sep=0pt]
\tikzstyle{vtx}=[circle,draw, fill=black, minimum width = 1pt, inner sep=0pt]
\tikzstyle{vtxb}=[circle,draw, fill=blue, minimum width = 1pt, inner sep=0pt]

    \node[vtx] (v02) at (0,4) {}; 
    \node[xshift=0pt, yshift=7pt] at (v02.north) {\scalebox{0.8}{$(0,2,0)$}};
    \node[novtx] (v12) at (2,4) {};
    \node[novtx] (v22) at (4,4) {}; 
    \node[vtx] (v01) at (0,2) {};
    \node[xshift=-16pt] at (v01.west) {\scalebox{0.8}{$(0,1,0)$}};
    
    \node[novtx] (v21) at (4,2) {}; 
    \node[vtx] (v00) at (0,0) {};
    \node[xshift=0pt, yshift=-7pt] at (v00.south) {\scalebox{0.8}{$(0,0,0)$}};
    \node[vtx] (v10) at (2,0) {};
    \node[xshift=0pt, yshift=-7pt] at (v10.south) {\scalebox{0.8}{$(1,0,0)$}};
    \node[vtx] (v20) at (4,0) {}; 
    \node[xshift=0pt, yshift=-7pt] at (v20.south) {\scalebox{0.8}{$(2,0,0)$}};
     \node[vtxb] (v1/2) at (1,1) {}; 
    \node[xshift=6pt, yshift=-8pt] at (v1/2.south) {\scalebox{0.75}{$(\frac{1}{2},\frac{1}{2},h)$}};

    \draw[thick] (v00) -- (v02);
    \draw[thick] (v00) -- (v20);
    \draw[thick] (v02) -- (v20);
    \draw[thick] (v00) -- (v1/2);
    \draw[thick] (v1/2) -- (v20);
    \draw[thick] (v02) -- (v1/2);
    \draw[dashed, red] (v01) -- (v21);
    \draw[dashed, red] (v10) -- (v12);
     \draw[dashed, red] (v02) -- (v22);
    \draw[dashed, red] (v20) -- (v22);
    
    \node[vtx] (v11) at (2,2) {}; 
    \node[xshift=16pt,yshift=3pt] at (v11.north) {\scalebox{0.8}{$(1,1,0)$}};
    \node[vtxb] (v1/3) at (2,0.66) {}; 
    \node[xshift=14pt, yshift=4pt] at (v1/3.east) {\scalebox{0.75}{$(1,\frac{1}{3},\frac{2h}{3})$}};
    \node[vtxb] (v1/3') at (0.66,2) {}; 
    \node[xshift=12pt, yshift=7pt] at (v1/3'.north) {\scalebox{0.75}{$(\frac{1}{3},1,\frac{2h}{3})$}};
    
\end{tikzpicture}}
  \caption{A view of $P$ from above: the points in $\pi_x(V(Q))$ are given by the vertices of $P$, and by the intersections of edges of $P$ with the integer grid (represented by dashed lines).}
    \label{fig:pyramid}
\end{figure}

If we use Observation \ref{obs:set-covering}, the set covering problem associated with the points in $V(Q)$ that have some fractional $y_{ij}$ variable is defined by the following matrix $A_Q$:
\[\begin{array}{c|cccc}
&y_{11}&y_{12}&y_{21}&y_{22}\\
\hline
v^4&1 &  0 &1 &  0 \\
v^5&1 & 0 &1 &1 \\
v^6&1 &1 &1 &  0 \\
v^7&1 &1 &1 &1 \\
v^{11}&1 &1 &  0 &  0 \\
v^{12}& 0 &  0 &1& 1 \\
v^{13}&  0 &  0& 1 &  0 \\
v^{14}& 0  & 0 &1& 1 \\
v^{15}& 1 &  0 &  0 &  0 \\
v^{16}& 1& 1 &  0 &  0
\end{array}\]
Since the vector $z=(1,0,1,0)$ is an optimal solution to this set covering instance, we have $\lpr(Q)=2$.
This implies that the polyhedron obtained after convexifying $Q$ with respect to variables $y_{11}$ and $y_{21}$ projects down to $\conv\{x\in P: x_1,x_2\in\Z\}$.

\begin{remark}\label{rem:nonnatural}
We show that the assumption that the binarizations are natural made in Theorems \ref{thm:verticesQ} and \ref{thm:vertices} cannot be dropped.
Let $P$ be the polyhedron defined in \eqref{eq:pyramid}, and let $B_i=\{(x_i,y_{i1},y_{i2})\in \R\times [0,1]^2: x_i=y_{i1}+y_{i2},\, y_{i1}\le2y_{i2}\}$ for $i=1,2$. It is easy to check that these binarizations are not natural. Denoting by $Q'$ the corresponding binary extended formulation, we have that $\pi_x(V(Q'))$ contains, for instance, the points $(0,\frac{3}{2},0)$ and $(\frac{3}{2},0,0)$, which are not intersections of $F$ and $G_{I,\alpha}$ for any $F$, $G_{I,\alpha}$ as in Theorems \ref{thm:verticesQ} and \ref{thm:vertices}.
\end{remark}

\section{The rank of a natural binarization}\label{sec:rank}

In this section we investigate the problem of eliminating certain types of fractional vertices of our binary extended formulation through sequential convexification of the $y$-variables of a \emph{single} binarization $B$. This restriction will allow us to define a notion of rank that depends on $B$ only, provided that $B$ is natural.

 Given a polytope $Q\subseteq \R^n$,  $i\in [n]$ and $\,\alpha_1,\dots,\alpha_{\ell} \in \Z$, we 
   say that $Q$ satisfies property $\mathcal{P}^i_{\alpha_1,\dots,\alpha_{\ell}}$  if 
   every vertex $\bar{x}$ of $Q$ satisfies $\alpha_j\le \bar{x}_i$ or $\bar{x}_i\ge \alpha_j+1$ for all $j\in[\ell]$. When $\ell=1$, we call the above property $\mathcal{P}^i_{\alpha}$. Below, we consider property $\mathcal{P}^i_{\alpha_1,\dots,\alpha_{\ell}}$ of a binary extended formulation $Q$ as defined in Section \ref{sec:bin}; we stress that, although the variables of $Q$ are denoted by both $x$ and $y$, the index $i$ of the property is always relative to $x_i$.

\begin{definition}\label{def-rank} Given a  polytope $P\subseteq [0,k]^p\times \R^{n-p}$ and a binary extended formulation $Q$ of  $P$,
the {\em rank} of  $\mathcal{P}^i_{\alpha_1,\dots,\alpha_{\ell}}$ (where $i\in[p])$  is the smallest integer $t$ such that there are variables $y_{ij_1},\dots y_{ij_t}$ for which $\mathcal{P}^i_{\alpha_1,\dots,\alpha_{\ell}}$ is satisfied by $((Q_{y_{ij_1}})\dots)_{y_{ij_t}}=\conv \{(x,y)\in Q: y_{ij_1},\dots, y_{ij_t}\in \{0,1\}\})$.
The rank of  $\mathcal{P}^i_{\alpha_1,\dots,\alpha_{\ell}}$ is taken to be 0 whenever $Q$ itself satisfies the property. \end{definition}

Notice that, given a binary extended formulation $Q$ of a  polytope $P$, if $((Q_{y_{ij_1}})\dots)_{y_{ij_t}}$  satisfies property
 $\mathcal{P}^i_{\alpha_1,\dots,\alpha_{\ell}}$, then $\pi_x(((Q_{y_{ij_1}})\dots)_{y_{ij_t}})$ satisfies the property. Furthermore, by Observation \ref{obs-persistency} the property remains satisfied as long as disjunctions on the $0/1$ variables are used.

\begin{observation}\label{obs:rank}  Given a  polytope $P\subseteq [0,k]^p\times \R^{n-p}$ and a binary extended formulation $Q$ of  $P$, the rank  of $\mathcal{P}^i_{\alpha_1,\dots,\alpha_{\ell}}$ is the optimal value of the set covering instance
$\min\{ \mathbf {1}z:Az\ge \mathbf{1},\, z\in \{0,1\}^d\}$,
where the rows of $A$ are the incidence vectors of the fractional components of $\bar y_i$ for all $(\bar x,\bar y)\in V(Q)$ that violate  $\mathcal{P}^i_{\alpha_1,\dots,\alpha_{\ell}}$. 

Since $B_i$ is a binarization, if $(\bar{x},\bar{y})\in V(Q)$ satisfies $\alpha_j<\bar{x}_i<\alpha_j+1$ for some $j\in[\ell]$, then $\bar y_i\not\in\{0,1\}^d$. Therefore the above set covering instance is always feasible, as $A$ has no all-zero row. Hence the rank is well-defined.
\end{observation}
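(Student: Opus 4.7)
The plan is to apply Theorem \ref{thm:balas} iteratively, which yields that $V\bigl(((Q_{y_{ij_1}})\dots)_{y_{ij_t}}\bigr)$ is exactly the set of $(\bar{x},\bar{y})\in V(Q)$ whose coordinates $\bar{y}_{ij_1},\dots,\bar{y}_{ij_t}$ all lie in $\{0,1\}$. With this characterization, property $\mathcal{P}^i_{\alpha_1,\dots,\alpha_{\ell}}$ holds in $((Q_{y_{ij_1}})\dots)_{y_{ij_t}}$ if and only if every vertex $(\bar{x},\bar{y})$ of $Q$ that violates the property has at least one fractional component among $\bar{y}_{ij_1},\dots,\bar{y}_{ij_t}$; equivalently, the set $\{j_1,\dots,j_t\}$ must intersect the set of fractional coordinates of $\bar{y}_i$ for every such violating vertex. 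This is precisely the hitting set condition encoded by $Az\ge \mathbf{1}$, so the rank coincides with the optimum of the set covering instance.

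First I would justify the claim on vertex sets. The proof of Theorem \ref{thm:balas} shows that $V(Q_{y_{ij}})=\{(\bar{x},\bar{y})\in V(Q):\bar{y}_{ij}\in\{0,1\}\}$, and iterating this identity over $j_1,\dots,j_t$ gives the desired description. In particular $V\bigl(((Q_{y_{ij_1}})\dots)_{y_{ij_t}}\bigr)\subseteq V(Q)$, so no ``new'' violating vertex can appear after convexification; this is the hereditary phenomenon already used in Observation \ref{obs-persistency}, and it is what allows us to reduce the question to one about the original vertices of $Q$.

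Next I would verify that the set covering instance is feasible, so that the minimum is attained. Suppose $(\bar{x},\bar{y})\in V(Q)$ violates $\mathcal{P}^i_{\alpha_1,\dots,\alpha_{\ell}}$, meaning $\alpha_j<\bar{x}_i<\alpha_j+1$ for some $j\in[\ell]$; in particular $\bar{x}_i\notin\{0,\dots,k\}$. If $\bar{y}_i$ were in $\{0,1\}^d$, then Observation \ref{obs:onex} applied to the binarization $B_i$ would force $\bar{x}_i\in\{0,\dots,k\}$, a contradiction. Hence $\bar{y}_i$ has at least one fractional coordinate, so the row of $A$ associated with $(\bar{x},\bar{y})$ is not identically zero, and $z=\mathbf{1}$ is feasible for the set covering instance.

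Combining the two pieces then gives the equality: each feasible $z$ encodes a subset of $[d]$ whose iterated convexification removes every violating vertex, while any subset of $y_i$-variables that realizes the rank must, by the vertex description above, satisfy $Az\ge\mathbf{1}$. Minimizing $\mathbf{1}z$ over $z\in\{0,1\}^d$ with $Az\ge\mathbf{1}$ therefore returns exactly the value of the rank as given by Definition \ref{def-rank}. No substantial obstacle arises here: the argument is essentially a direct translation of the vertex-deletion interpretation of sequential convexification into set-covering language, and the only delicate point, namely feasibility of the covering instance, follows at once from Observation \ref{obs:onex}.
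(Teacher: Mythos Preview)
Your proposal is correct and follows exactly the reasoning the paper has in mind: the paper does not give a separate proof of this observation, treating it as an immediate consequence of the vertex interpretation of sequential convexification (the proof of Theorem~\ref{thm:balas} and Observation~\ref{obs:set-covering}), with the feasibility clause justified in-line via the definition of a binarization (equivalently, Observation~\ref{obs:onex}). Your write-up simply makes these implicit steps explicit.
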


 Given  a binary extended formulation $Q$ of  $P\subseteq [0,k]^p\times \R^{n-p}$, we now study the rank of $\mathcal{P}^i_{\alpha_1,\dots,\alpha_{\ell}}$ provided the binarization $B_i$ is natural.
 The main finding is that the rank of $\mathcal{P}^i_{\alpha_1,\dots,\alpha_{\ell}}$ is the lift-and-project rank of some polytope contained in $B_i$ (Theorem \ref{thm:onebin}).
 
 In the following we write $B(f)$ to indicate $\{(x,y)\in B:x=f\}$ for any binarization $B$ and any (fractional) number $f$.

\begin{lemma}\label{le:VBif} Let $B$ be a natural binarization, $\alpha\in \{0,\dots,k-1\}$, $\alpha<f<\alpha+1$. 
Then $V(B(f))$ consists of one point $(\tilde{x},\tilde{y})$ in the relative interior of each 1-dimensional face of $B$ whose vertices $(x^u,y^u)$ and ($x^v,y^v)$ satisfy $x^u\leq \alpha$,  $x^v\geq \alpha+1$.
\end{lemma}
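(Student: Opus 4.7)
The plan is to view $B(f) = B \cap \{x = f\}$ as a hyperplane slice of $B$ and identify its vertices via the standard slicing principle: vertices of such a slice arise either as vertices of the original polytope lying on the hyperplane, or as transversal intersections with $1$-faces. Naturality of $B$ makes both halves clean, since $\pi_x(V(B)) = \{0,\dots,k\}$ together with $f \notin \Z$ forces $V(B) \cap \{x=f\} = \emptyset$, leaving only the transversal case.

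For the ``only if'' direction, I would take any $(\tilde x, \tilde y) \in V(B(f))$ and let $F$ be the minimal face of $B$ containing it, so that $(\tilde x, \tilde y)$ lies in the relative interior of $F$. The vertices of $F$ are vertices of $B$ and hence have integer $x$-coordinates, so $F$ is not contained in $\{x=f\}$. Therefore the affine hull of $F$ meets $\{x=f\}$ in a subspace of codimension one in $F$ passing through $(\tilde x, \tilde y)$, and $F \cap \{x=f\}$ is a polytope of dimension $\dim F - 1$ containing $(\tilde x, \tilde y)$ in its relative interior. If $\dim F \ge 2$, this polytope contains a nondegenerate segment of $B(f)$ through $(\tilde x, \tilde y)$, contradicting vertexhood. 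Hence $F$ is a $1$-face; its endpoints $(x^u, y^u), (x^v, y^v)$ have integer $x$-coordinates on opposite sides of $f$, so WLOG $x^u \le \alpha$ and $x^v \ge \alpha+1$.

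For the converse, given a $1$-face $F$ of $B$ with endpoints satisfying $x^u \le \alpha$ and $x^v \ge \alpha+1$, a one-variable linear interpolation along $F$ produces a unique point with $x$-coordinate $f$, lying in the relative interior of $F$ because $x^u < f < x^v$ strictly. To certify this point as a vertex of $B(f)$, I would write $F = \{(x,y) \in B : c^\top (x,y) = c^*\}$ for a linear functional $c$ maximized over $B$ exactly on $F$; the intersection point is then the unique maximizer of $c$ over the polytope $B(f)$, hence a vertex.

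The main obstacle is the dimension-drop step in the main direction. I have to argue cleanly that $\{x=f\}$ reduces the dimension of $F$ by exactly one, which hinges on $F \not\subseteq \{x=f\}$ --- the place where naturality (and not merely integrality of vertices of $B$ itself) is used decisively. Once this step is in place, the rest of the argument consists of standard convex-geometry manipulations.
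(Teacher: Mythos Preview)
Your proof is correct and follows the same approach as the paper's: since $B$ is natural, no vertex of $B$ lies on the hyperplane $\{x=f\}$, so the vertices of the slice $B(f)$ are exactly the transversal intersections of $\{x=f\}$ with the $1$-faces of $B$. The paper states this in two lines by invoking the standard slicing fact directly, whereas you spell out the minimal-face/dimension-drop argument behind it (one small remark: your parenthetical contrasting naturality with ``integrality of vertices of $B$'' is off, since by Definition~\ref{def:natural} naturality \emph{is} precisely integrality of the $x$-coordinate at every vertex of $B$).
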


\begin{proof}
Since $B$ is a natural binarization, we have $\pi_x(V(B))=\{0,\dots,k\}$. Hence $\{(x,y)\in V(B):x=f\}=\emptyset$.  This shows that   $V(B(f))$ consists of one point $(\tilde{x},\tilde{y})$ in the relative interior of each 1-dimensional face of $B$ that is intersected by the hyperplane defined by $x=f$. That is, whose vertices are $(x^u,y^u)$ and ($x^v,y^v)$ with $x^u\leq \alpha$ and $x^v\geq \alpha+1$.
\end{proof}

Since, for a natural binarization $B$ and a  fractional number $f$, one has $\{(x,y)\in B(f):y\in \{0,1\}^d\}=\emptyset$, the lift-and-project rank  $\lpr(B(f))$ with respect to variables $y_1,\dots,y_d$, 
as defined in Section \ref{sec:seq-conv}, is the minimum number of $y$-variables whose convexification produces an empty polytope.

\begin{observation}\label{obs:LRBf} 
Let $B$ be a binarization, 
let $S$ be a set of 1-dimensional faces of $B$, and let $V',\,V''\subseteq B$, each consisting of at least one point in the relative interior of each face of $S$. Then $\lpr(\conv(V'))=\lpr(\conv(V''))$.
\end{observation}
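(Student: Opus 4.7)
The plan is to reduce $\lpr(\conv(V'))$ and $\lpr(\conv(V''))$ to the same set covering instance via Observation~\ref{obs:set-covering}. First, for each $e \in S$ with endpoints $(x^u,y^u)$ and $(x^v,y^v)$, I would define $F(e) \subseteq [d]$ as the set of indices $i$ for which it is \emph{not} the case that $y^u_i = y^v_i \in \{0,1\}$. A direct linear interpolation shows that at every relative interior point of $e$, the set of $y$-coordinates lying strictly between $0$ and $1$ coincides with $F(e)$, so $F(e)$ is well-defined independently of the choice of interior point.

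Next I would characterize the vertices of $\conv(V')$. By the standard face-minimality property of polytopes, if $v$ lies in the relative interior of a face $e$ of $B$ and $v$ is written as a convex combination of points of $B$ with positive coefficients, then each such point lies on $e$. Applied here, this implies that every vertex of $\conv(V')$ is a point of $V'$ lying in the relative interior of some edge $e \in S$, and is an extreme of $V' \cap e$ along $e$; conversely, since $V' \cap e$ is nonempty by hypothesis, its extremes along $e$ are vertices of $\conv(V')$. Hence each $e \in S$ contributes one or two vertices to $\conv(V')$, and every such vertex has fractional $y$-support exactly $F(e)$.

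Combining the two steps, the family of fractional $y$-supports of vertices of $\conv(V')$ equals $\{F(e) : e \in S\}$, possibly with duplicated rows. The same holds verbatim for $\conv(V'')$, so the two set covering matrices agree up to duplicates. Since duplicate rows do not affect the optimum, Observation~\ref{obs:set-covering} yields $\lpr(\conv(V')) = \lpr(\conv(V''))$. I expect the vertex-characterization step to be the main obstacle: one must invoke the face-minimality property to rule out that an interior point of $e$ is a convex combination of points of $V'$ on other edges, and then carefully argue that at least one extreme along each edge of $S$ survives as a vertex of $\conv(V')$.
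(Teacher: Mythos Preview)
Your proposal is correct and follows essentially the same approach as the paper: both reduce to Observation~\ref{obs:set-covering} and argue that the set covering constraint matrices of $\conv(V')$ and $\conv(V'')$ coincide up to duplicate rows. The paper's proof is a one-line assertion of this fact, while you spell out the supporting details (the definition of $F(e)$, the fact that every relative interior point of $e$ has fractional $y$-support exactly $F(e)$, and the vertex characterization of $\conv(V')$ via face-minimality); these details are implicitly needed and your treatment of them is sound.
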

\begin{proof} The set covering problems associated with $\lpr(\conv(V'))$ and $\lpr(\conv(V''))$, as defined in Observation \ref{obs:set-covering}, are the same up to duplication of identical rows of the constraint matrix. 
\end{proof}

\begin{lemma} \label{le:michi} Given a natural binarization $B$, $\alpha_1,\dots,\alpha_{\ell}\in \Z$, 
and $\alpha_j< f_j<\alpha_j+1$ for $j\in[\ell]$, let $S:=\bigcup_{j\in [\ell]}V(B(f_j))$. Then
the minimum set of 1-dimensional faces of $B$ whose relative interiors cover  $S$ coincides with 
the minimum set of 1-dimensional faces of $B$ whose relative interiors cover  $V(\conv(S))$. 
\end{lemma}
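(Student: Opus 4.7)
The plan is to note first that the two ``minimum sets'' of 1-dimensional faces appearing in the statement are in fact uniquely determined, and then to argue that they coincide by examining how the vertices of $\conv(S)$ distribute across the edges of $B$.

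By Lemma \ref{le:VBif}, every point of $S=\bigcup_{j\in[\ell]} V(B(f_j))$ lies in the relative interior of some 1-dimensional face of $B$. Since the relative interiors of distinct faces of a polytope are pairwise disjoint, each point of $S$ belongs to the relative interior of exactly one edge, so the unique (hence minimum) set of edges whose relative interiors cover $S$ is
\[
\mathcal{E}(S):=\{e \text{ edge of } B : S \cap \operatorname{relint}(e)\neq \emptyset\}.
\]
Applying the same reasoning to $V(\conv(S))\subseteq S$ yields that the unique minimum cover of $V(\conv(S))$ is $\mathcal{E}(V(\conv(S)))=\{e : V(\conv(S)) \cap \operatorname{relint}(e)\neq \emptyset\}$, and clearly $\mathcal{E}(V(\conv(S)))\subseteq \mathcal{E}(S)$. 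The crux of the proof is the reverse inclusion: every edge $e$ containing a point of $S$ in its relative interior also contains a vertex of $\conv(S)$ in its relative interior.

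To prove this, fix such an edge $e$. Since $e$ is a face of $B$, choose a valid inequality $c^\top (x,y)\ge \beta$ for $B$ whose equality set is exactly $e$. Let $p^-=(x^-,y^-)$ be a point in $S\cap \operatorname{relint}(e)$ of smallest first coordinate $x^-$. The key construction is to perturb $c$ into a linear functional that selects $p^-$ uniquely over all of $S$: for $\epsilon>0$ sufficiently small, set
\[
\ell(x,y):=c^\top (x,y)+\epsilon\, x.
\]
On $S\cap e$ the term $c^\top (x,y)$ equals $\beta$ constantly, so minimizing $\ell$ picks the point of smallest $x$-coordinate, namely $p^-$. On the finite set $S\setminus e$ the gap $c^\top (x,y)-\beta$ is strictly positive and therefore bounded below by some $\delta>0$, while the $x$-coordinate on $B$ lies in $[0,k]$; choosing $\epsilon$ with $\epsilon k<\delta$ guarantees $\ell(x,y)>\ell(p^-)$ on $S\setminus e$ as well. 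Hence $p^-$ is the unique minimizer of $\ell$ on $S$, so $p^-\in V(\conv(S))\cap \operatorname{relint}(e)$, showing $e\in \mathcal{E}(V(\conv(S)))$.

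The main obstacle is precisely this perturbation step: one needs a linear functional that selects an \emph{interior} point of an edge as the unique minimizer on $S$, which cannot be achieved by any single supporting functional of $B$. The clean resolution exploits the fact that, restricted to $e$, the original supporting functional $c$ is constant, so one may break the tie using the $x$-coordinate itself, and the finiteness of $S$ ensures the perturbation is small enough not to disturb the strict inequality enjoyed on $S\setminus e$.
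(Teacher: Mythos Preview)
Your proof is correct and follows essentially the same route as the paper: both reduce the lemma to the observation that $\conv(S)$ has at least one vertex in the relative interior of every edge of $B$ that meets $S$. The paper simply states this observation for an arbitrary polytope and leaves its verification to the reader, whereas you supply a full argument via the perturbed functional $\ell(x,y)=c^\top(x,y)+\epsilon x$. Your tie-breaking by the $x$-coordinate works precisely because, by Lemma~\ref{le:VBif}, every relevant edge has endpoints with distinct $x$-values; this is a nice concrete choice, though slightly less general than the paper's unproven claim (which applies to any polytope and would require perturbing along an arbitrary edge direction rather than the $x$-axis).
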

\begin{proof} Note first that by Lemma \ref{le:VBif}, $S$ is covered by the relative interiors of 1-dimensional faces of  $B$, so the statement of the lemma is meaningful. Now the lemma is implied by the following observation: 

{\em Let  $S$ be a subset of points in the relative interiors of some 1-dimensional faces of a polytope $P$.  Then $\conv(S)$ has at least one vertex (possibly two)  in each 1-dimensional face of $P$ that contains a point in $S$.}
 \end{proof}

\begin{theorem}\label{thm:onebin} Let $P\subseteq [0,k]^p\times \R^{n-p}$ be a polytope and let $Q\subseteq\R^n\times [0,1]^{pd}$ be a binary extended formulation of $P$ with binarizations $B_i\subseteq \R\times [0,1]^d$ for $i\in [p]$ that are natural. Given $\alpha_1,\dots,\alpha_{\ell}\in \Z$ and $i\in[p]$,  assume that for every $j\in [\ell]$ there is a point $(\bar{x}^j,
\bar{y}^j)\in V(Q)$ with $\alpha_j<\bar{x}^j_i<\alpha_j+1$. 
Then the rank of $\mathcal{P}^i_{\alpha_1,\dots,\alpha_{\ell}}$ equals  $\lpr(\conv(\bigcup_{j\in [\ell]} B_i(\bar{x}^j_i)))$.
\end{theorem}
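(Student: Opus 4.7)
The plan is to reduce the rank of $\mathcal{P}^i_{\alpha_1,\dots,\alpha_\ell}$ to a set covering problem via Observation \ref{obs:rank}, then argue that only the family of $1$-dimensional faces of $B_i$ crossing the intervals $(\alpha_j,\alpha_j+1)$ matters, at which point Lemma \ref{le:michi} and Observation \ref{obs:LRBf} identify the value with $\lpr(\conv(\bigcup_{j}B_i(\bar x_i^j)))$.

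First, I would apply Observation \ref{obs:rank} to express the rank as the set covering value whose rows are the supports of the fractional components of $\bar y_i$ over all $(\bar x,\bar y)\in V(Q)$ violating the property; call the collection of these $\bar y_i$'s $V$. For any such vertex, $\bar x_i$ is non-integer, and since $B_i$ is natural, Theorem \ref{thm:verticesQ} forces $i\notin I$ in the decomposition furnished by that theorem (otherwise condition (ii) would require $\bar y_i\in\{0,1\}^d$ and hence $\bar x_i\in\{0,\dots,k\}$), giving $(\bar x_i,\bar y_i)\in V(B_i(\bar x_i))$. By Lemma \ref{le:VBif} each such $\bar y_i$ then lies in the relative interior of a unique $1$-dimensional face of $B_i$ crossing some strip $(\alpha_j,\alpha_j+1)$.

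The main step is to show that $V$ and $V':=\bigcup_{j\in[\ell]}V(B_i(\bar x_i^j))$ meet exactly the same family $S$ of $1$-dimensional faces of $B_i$. The non-trivial direction uses the ``if'' part of Theorem \ref{thm:verticesQ}: starting from the hypothesised violating vertex $(\bar x^j,\bar y^j)$ and any $\tilde y_i\in V(B_i(\bar x_i^j))$, the point obtained by replacing $\bar y^j_i$ with $\tilde y_i$ still satisfies (i)--(iii) with the same $F,I,\alpha$, because $i\notin I$ and because the $y$-variables of different binarizations are disjoint. Hence this new point is a vertex of $Q$, still violating since $\bar x_i^j$ is unchanged, so $V$ contains a representative in the relative interior of every face in $S$ crossing strip $j$; repeating across $j$ shows that $V$ and $V'$ hit exactly the same family $S$.

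It remains to chain together the identifications. By the duplication argument in the proof of Observation \ref{obs:LRBf}, the set covering instances induced by $V$ and by $V'$ have the same optimal value. Lemma \ref{le:michi} gives the analogous coincidence between $V'$ and $V(\conv(V'))$, so a second application of Observation \ref{obs:LRBf} identifies this common value with $\lpr(\conv(V'))$; finally, each hyperplane slice $B_i(\bar x_i^j)$ is a polytope and equals $\conv(V(B_i(\bar x_i^j)))$, so $\conv(V')=\conv(\bigcup_{j}B_i(\bar x_i^j))$, closing the chain. The essential ingredient is the swapping argument in the main step, which leans crucially on the naturalness of $B_i$ (to force $i\notin I$) and on the disjointness of the $y$-variables across different binarizations; the rest is bookkeeping supplied by Observation \ref{obs:LRBf} and Lemma \ref{le:michi}.
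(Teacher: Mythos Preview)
Your proposal is correct and follows essentially the same approach as the paper: reduce the rank to a set covering instance via Observation~\ref{obs:rank}, use Theorem~\ref{thm:verticesQ} both to place $(\bar x_i,\bar y_i)$ on a $1$-dimensional face of $B_i$ and (via the swap of $\bar y_i^j$) to realise every vertex of $B_i(\bar x_i^j)$ inside $V(Q)$, and then invoke Lemma~\ref{le:michi} to pass to $\conv(\bigcup_j B_i(\bar x_i^j))$. One small wording slip: in your parenthetical, condition~(ii) for $i\in I$ gives $(\bar x_i,\bar y_i)\in V(B_i)$ and hence $\bar x_i\in\Z$ by \emph{naturalness}; it does not directly force $\bar y_i\in\{0,1\}^d$ (that would be integrality), though the conclusion $i\notin I$ is of course unaffected.
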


\begin{proof}
Let $V':=V(\conv(\bigcup_{j\in [\ell]} B_i(\bar{x}^j_i)))$. Clearly $V'\subseteq \bigcup_{j\in [\ell]} V(B_i(\bar{x}^j_i))$. On the other hand, thanks to Lemma \ref{le:michi}, for any $j\in [\ell]$ and $(x_i,y_i)\in V(B_i(\bar{x}^j_i))$, there is a point $(x_i',y_i')\in V'$ such that, for $h\in[d]$, $y_{ih}$ is fractional if and only if $y'_{ih}$ is.

Let $A_1$ be the constraint matrix of the set covering problem relative to $\mathcal{P}^i_{\alpha_1,\dots,\alpha_{\ell}}$, as defined in Observation \ref{obs:rank}. Further, let $A_2$ be the constraint matrix of the set covering problem relative to $\lpr(V')$ (with respect to variables $y_{i1},\dots,y_{id}$), as defined in Observation \ref{obs:set-covering}. We will show that $A_1$ and $A_2$ have the same set of rows (i.e., they are the same matrix up to repeating identical rows), concluding the proof.

First, let $a\in \{0,1\}^d$ be a row of $A_1$. Then there is a vertex $(\tilde{x},\tilde{y})\in V(Q)$ that violates $\mathcal{P}^i_{\alpha_1,\dots,\alpha_{\ell}}$, such that the fractional coordinates of $\tilde{y}_i$ correspond to the 1-entries of $a$. Let $j\in [\ell]$ satisfy $\alpha_j<\tilde{x}_i<\alpha_j+1$. Thanks to Theorem \ref{thm:verticesQ} $(\tilde{x}_i,\tilde{y}_i)$ is a vertex of $B_i(\tilde{x}_i)$, in particular it lies in the relative interior of a 1-dimensional face of $B_i$. But then there is a vertex of $B_i(\bar{x}_i^j)$ (hence, there is a vertex of $V'$) that lies in the same 1-dimensional face, implying that  $a$ is also a row of $A_2$.

On the other hand, let $a\in \{0,1\}^d$ be a row of $A_2$. Then there are $j\in[\ell]$ and a vertex $(\bar{x}^j_i,\hat{y}_i)$ of $B_i(\bar{x}_i^j)$ such that the fractional coordinates of $\hat{y}_i$ correspond to the ones of $a$.
One can check, using Theorem \ref{thm:verticesQ} and the point $(\bar{x}^j,
\bar{y}^j)\in V(Q)$ given by the hypotheses, that there is a vertex $(\bar{x}^j, \tilde{y})\in V(Q)$ such that $\tilde{y}_i=\hat{y}_i$. This shows that $a$ is a row of $A_1$ as well.
\end{proof}

We remark that, under the assumptions of the above theorem, the rank of $\mathcal{P}^i_{\alpha_1,\dots,\alpha_{\ell}}$ only depends on $B_i$ and $\alpha_1,\dots, \alpha_\ell$, and not on $P$ or on the other binarizations of $Q$. We now give another description of the rank of $\mathcal{P}^i_{\alpha_1,\dots,\alpha_{\ell}}$ that will be useful in the following.

 Recall that the skeleton $\sk(P)$ of a polytope $P$ is the simple graph whose vertices are the vertices of $P$, where two vertices are adjacent in $\sk(P)$ if and only if they are adjacent  in $P$. 
 Given the skeleton $\sk(B)$ of a  natural binarization $B\subseteq [0,k]\times [0,1]^d$ and $\alpha\in \{0,\dots,k-1\}$, we say that edge $((x^u,y^u),(x^v,y^v))$ is an {\em $\alpha$-edge} if $x^u\le \alpha$ and $x^v\ge \alpha+1$, or viceversa.
 The {\em indicator vector} of edge $((x^u,y^u),(x^v,y^v))$ is the  vector  $t\in \{0,1\}^d$  where $t_k=0$ if $y^u_k$ and $y^v_k$ are either both 0 or both 1.

\begin{definition}\label{def:rankB}
  Given a natural binarization $B\subseteq [0,k]\times [0,1]^d$,   $\alpha_1,\dots,\alpha_\ell\in \{0,\dots,k-1\}$, and   $\bar{x}^j$  such $\alpha_j<\bar{x}^j<\alpha_j+1$ for $j\in[\ell]$, we define the {\em rank} of $B$ with respect to $\alpha_1,\dots,\alpha_\ell$ as follows:
  $$\rk_B(\alpha_1,\dots,\alpha_\ell):=\lpr(\conv(\textstyle\bigcup_{j\in [\ell]} B(\bar{x}^j))).$$
\end{definition}

 \begin{observation}\label{obs:skeleton} 
 Given a natural binarization $B\subseteq [0,k]\times [0,1]^d$ and $\alpha_1,\dots,\alpha_\ell\in \{0,\dots,k-1\}$, $\rk_B(\alpha_1,\dots,\alpha_\ell)$ is the rank of $\mathcal{P}^i_{\alpha_1,\dots,\alpha_{\ell}}$ with respect to any binary extended formulation satisfying the hypotheses of Theorem \ref{thm:onebin} whose associated $i$-th binarization is $B$.
 
Furthermore $\rk_B(\alpha_1,\dots,\alpha_\ell)$  is the optimum value of the set covering problem in which the rows of the constraint matrix are the indicator vectors of the $\alpha_j$-edges,  $j\in[\ell]$, of $\sk(B)$. 
 \end{observation}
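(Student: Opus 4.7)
The plan is to assemble the observation from tools already in place, since each clause essentially repackages a preceding lemma.

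For the first assertion, the identity follows almost immediately from Theorem \ref{thm:onebin}: given any binary extended formulation $Q$ with $i$-th binarization $B$ satisfying the hypotheses of that theorem, the rank of $\mathcal{P}^i_{\alpha_1,\dots,\alpha_\ell}$ equals $\lpr(\conv(\bigcup_{j\in[\ell]} B(\bar{x}^j_i)))$ for the particular fractional values $\bar{x}^j_i$ arising as coordinates of vertices of $Q$. To align this with Definition \ref{def:rankB}, I must check that the quantity on the right is independent of the choice of fractional representative in each interval $(\alpha_j,\alpha_j+1)$. This is exactly the content of Lemma \ref{le:VBif}, which says that for any $f\in(\alpha,\alpha+1)$ the set $V(B(f))$ consists of one point in the relative interior of each $\alpha$-edge of $\sk(B)$, combined with Observation \ref{obs:LRBf}, which says that $\lpr(\conv(\cdot))$ depends only on the collection of 1-dimensional faces of $B$ whose relative interiors are hit.

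For the second assertion, I would unpack $\lpr(\conv(\bigcup_{j\in[\ell]} B(\bar{x}^j)))$ into a set covering instance via Observation \ref{obs:set-covering}: the constraint matrix has a row for every vertex of $\conv(\bigcup_j B(\bar{x}^j))$ with at least one fractional $y$-coordinate, the row being the incidence vector of those fractional coordinates. Lemma \ref{le:michi} allows me to replace this vertex set, without altering the resulting minimum cover, by $\bigcup_{j\in[\ell]} V(B(\bar{x}^j))$, and Lemma \ref{le:VBif} then identifies this latter collection with the $\alpha_j$-edges of $\sk(B)$, one representative per edge.

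The last step is a small piece of bookkeeping: for a point in the relative interior of an edge with endpoints $(x^u,y^u)$ and $(x^v,y^v)$, the fractional entries of $y$ are precisely those indexed by the indicator vector of the edge. Since $\lambda y^u_k+(1-\lambda)y^v_k$ with $\lambda\in(0,1)$ equals $0$ only when $y^u_k=y^v_k=0$, equals $1$ only when $y^u_k=y^v_k=1$, and is fractional in every remaining case, the incidence vector of fractional entries matches the indicator vector of the edge as defined. Stitching these observations together gives the stated set covering formulation. I do not anticipate any real obstacle here; the only point requiring attention is verifying that the invocations of Lemma \ref{le:michi} and Observation \ref{obs:LRBf} let me move freely between $\bigcup_j V(B(\bar{x}^j))$ and $V(\conv(\bigcup_j B(\bar{x}^j)))$ without changing the covering instance, which is where the assumption that $B$ is natural does its essential work.
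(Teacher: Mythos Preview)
Your proposal is correct and mirrors the paper's intent: the paper states this as an observation without an explicit proof, since it is meant to follow immediately from Theorem~\ref{thm:onebin}, Definition~\ref{def:rankB}, Lemma~\ref{le:VBif}, Observation~\ref{obs:LRBf}, and Lemma~\ref{le:michi}. You have identified exactly those ingredients and stitched them together in the right order, including the small but necessary check that the fractional support of a point in the relative interior of an edge coincides with the edge's indicator vector.
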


 In the next subsections we will investigate the parameter $\rk_B$ for different natural binarizations introduced in the literature. 



We recall a result of Dash, G\"unluk and Hildebrand \cite{dash2018binary} that highlights the importance of property $\mathcal{P}_{\alpha}$ with respect to general splits involving variables of a single binarization.

\begin{theorem}\label{thm:dash} (Proposition 6 in \cite{dash2018binary}) Given a binary extended formulation $Q$ of a polytope $P\subseteq [0,k]^p\times \R^{n-p}$ with binarizations $B_1,\dots,B_p\subseteq \R\times [0,1]^d$ (not necessarily natural), let $\pi\in \Z^d$, $\pi_0\in \Z$ and $i\in [p]$.
Let $Q^0:=\{(x,y)\in Q:\pi y_i\le \pi_0\}$, $Q^1:=\{(x,y)\in Q:\pi y_i\ge \pi_0+1\}$. Then there exists  $\alpha\in \{0,\dots,k-1\}$ such that
$$\conv(\{(x,y)\in Q:x_i\le \alpha\})\cup
\{(x,y)\in Q:x_i\ge \alpha+1\})\subseteq \conv (Q^0\cup Q^1).$$ 
\end{theorem}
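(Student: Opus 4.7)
The plan is to exploit the fact that $\pi y_i - \pi_0$ is an integer-valued linear functional on the \emph{binary} points of $B_i$, and then translate the split $\pi y_i \le \pi_0$ vs $\pi y_i \ge \pi_0+1$ into a canonical split $x_i \le \alpha$ vs $x_i \ge \alpha+1$ for an appropriately chosen $\alpha$.

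First, I would reduce the problem to $B_i$ alone, since the split only involves the variables $(x_i, y_i)$ and the remaining coordinates can be carried along unchanged in any convex decomposition. Setting $B_i^0 := B_i \cap \{\pi y_i \le \pi_0\}$ and $B_i^1 := B_i \cap \{\pi y_i \ge \pi_0+1\}$, the key observation is that every binary $y \in \{0,1\}^d$ with $(j,y) \in B_i$ for some $j \in \{0,\dots,k\}$ lies in $B_i^0 \cup B_i^1$, because $\pi y \in \Z$. In particular, each integer value of $x_i$ admits at least one binary representation on one of the two sides of the split.

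Second, I would determine $\alpha$ from the projections $\pi_{x_i}(B_i^0)$ and $\pi_{x_i}(B_i^1)$, which are subintervals of $[0,k]$. After possibly swapping the two sides of the split, I would take $\alpha$ to be the largest integer such that $\{0,\dots,\alpha\}$ is entirely representable by some $y \in \{0,1\}^d$ with $\pi y \le \pi_0$, and verify that $\{\alpha+1,\dots,k\}$ is then entirely representable by some $y$ with $\pi y \ge \pi_0+1$. Third, for the containment itself, I would take an arbitrary $(\bar x,\bar y) \in Q$ with $\bar x_i \le \alpha$ (the case $\bar x_i \ge \alpha+1$ being symmetric) and exhibit a convex decomposition of $(\bar x_i, \bar y_i)$ inside $B_i^0 \cup B_i^1$; this can then be lifted to a decomposition of $(\bar x,\bar y)$ inside $Q^0 \cup Q^1$ by keeping the remaining coordinates fixed, since only $(x_i,y_i)$ is constrained by the split.

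The main obstacle I anticipate is precisely the choice of $\alpha$ in step two: in full generality the projections $\pi_{x_i}(B_i^0)$ and $\pi_{x_i}(B_i^1)$ may overlap or even nest, so a clean threshold separating the "0-representable" from the "1-representable" integers need not exist. Handling this will likely require a case analysis on the relative positions of the two intervals, using the overlap between them to build the required convex combinations in step three. The hypothesis that the binarizations are \emph{not} assumed natural makes this the delicate part of the argument, since one cannot rely on vertices of $B_i$ having integer $x_i$-coordinates, and the only rigid structure available is the integrality of $\pi y_i$ on binary points.
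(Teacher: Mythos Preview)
First, note that the paper does not give its own proof: the result is quoted as Proposition~6 of \cite{dash2018binary}, with a remark that the original statement there is weaker but that the same argument carries through. So there is no proof in the paper to compare your proposal against.

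Your step~3 has a genuine gap, and it is not the one you flagged. If you decompose $(\bar x_i,\bar y_i)=\sum_j\lambda_j(x_i^{(j)},y_i^{(j)})$ with each $(x_i^{(j)},y_i^{(j)})\in B_i^0\cup B_i^1$ and then ``lift by keeping the remaining coordinates fixed'', the lifted points have $x$-part $(\bar x_1,\dots,x_i^{(j)},\dots,\bar x_n)$, which need not lie in $P$; hence they need not lie in $Q$, and the decomposition does not place $(\bar x,\bar y)$ in $\conv(Q^0\cup Q^1)$. If instead you keep $\bar x_i$ fixed and vary only $y_i$, you need $(\bar x_i,\bar y_i)\in\conv\big((B_i^0\cup B_i^1)\cap\{x_i=\bar x_i\}\big)$, and nothing guarantees this for a non-integral $B_i$. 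Concretely, take $p=n=k=1$, $d=2$, $P=[0,1]$, $B_1=\conv\{(0,0,0),(1,1,1),(0,\frac{1}{2},0)\}$ (a valid binarization of range $\{0,1\}$), and $\pi=(1,-1)$, $\pi_0=0$: then $Q^1=\emptyset$, $\conv(Q^0\cup Q^1)=\{(t,t,t):0\le t\le1\}$, yet $(0,\frac{1}{2},0)\in\{(x,y)\in Q:x_1\le0\}$, and $\alpha=0$ is the only admissible value. So the literal $(x,y)$-space inclusion cannot hold here. The version that \emph{is} provable along your lines---and presumably what the cited argument establishes---is the inclusion after projecting onto the $x$-space; there no convex decomposition is needed, since once $\alpha$ is chosen with $[0,\alpha]\subseteq\pi_{x_i}(B_i^0)$, any $\bar x\in P$ with $\bar x_i\le\alpha$ lifts directly into $Q^0$ by picking $\tilde y_i$ with $(\bar x_i,\tilde y_i)\in B_i^0$.
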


(The original statement given in \cite{dash2018binary} is weaker. However their argument proves the above theorem.)

This theorem indicates that split disjunctions in the extended space that involve $y$-variables of a single binarization are weaker than (canonical) split disjunctions in the $x$-space. On the other hand, the authors of \cite{dash2018binary} argue that a split disjunction that involves $y$-variables belonging to distinct binarizations may be more powerful than any split disjunction in the $x$-space.



\subsection{The rank of the unary and full binarizations}


  We now examine
  $\rk_B(\alpha_1,\dots,\alpha_{\ell})$ for the unary and the full binarizations, as defined at the end of Section \ref{sec:properties}.
  \smallskip

Binarizations $B^U(d)$ and $B^F(d)$ are $d$-dimensional polytopes with  $d+1$ vertices. Therefore they are simplices and the skeleton of these binarizations is a complete graph on $d+1$ vertices.

\begin{observation}\label{obs:unary}
 The vertices of $B^U(d)$ are $(i,y(i))$ for $i=0,\dots,d$, where $y(i)\in \{0,1\}^d$ has the first $i$ components equal to 1 and the others to 0. The indicator vector $t$ of edge  $((i,y(i)), (j,y(j)))$, where $i<j$, has $t_k=1$ for  $i+1\le k\le j$ and  $t_k=0$ otherwise.  Thus any  matrix whose rows are indicator vectors of edges of the skeleton of $B^U(d)$  has the consecutive ones property and is therefore totally unimodular. 
\end{observation}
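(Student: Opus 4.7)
The plan is to verify the three claims in order, exploiting the fact that $B^U(d)$ is a simplex.

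First I would identify the vertices. After eliminating $x = \sum_{i=1}^d y_i$, the binarization $B^U(d)$ is described in $\R^d$ by the $d+1$ inequalities $1 \ge y_1$, $y_1 \ge y_2$, \dots, $y_{d-1} \ge y_d$, $y_d \ge 0$. A vertex is obtained by making $d$ of these tight. A direct inspection shows that dropping the inequality $y_j \ge y_{j+1}$ (for $j \in \{1,\dots,d-1\}$) forces $y_1 = \cdots = y_j = 1$ and $y_{j+1} = \cdots = y_d = 0$, i.e., $y = y(j)$; dropping $y_1 \le 1$ gives $y = y(0)$; dropping $y_d \ge 0$ gives $y = y(d)$. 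Thus the candidates for vertices are exactly the points $(i, y(i))$ for $i = 0,\dots,d$, and each is indeed a vertex since it satisfies $d$ linearly independent tight inequalities. One could alternatively note that by Observation~\ref{obs:onex}, each $(i, y(i))$ is a vertex, and since $B^U(d)$ is affine of dimension $d$ (as the $y$-variables determine $x$), it has at most $d+1$ vertices.

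Next I would observe that the $d+1$ vertices $(i, y(i))$ are affinely independent (the differences $y(j) - y(0) = (1,\dots,1,0,\dots,0)$ for $j=1,\dots,d$ are linearly independent), so $B^U(d)$ is a $d$-simplex, and consequently its skeleton is the complete graph $K_{d+1}$: every pair of vertices forms an edge. From this, computing the indicator vector of the edge joining $(i,y(i))$ and $(j,y(j))$ with $i < j$ is immediate: the $k$th coordinates of $y(i)$ and $y(j)$ differ if and only if $i+1 \le k \le j$, so $t_k = 1$ on exactly that range and $t_k = 0$ otherwise.

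Finally, the matrix whose rows are these indicator vectors has a block of consecutive ones in each row, so it has the consecutive ones property; by the classical result of Fulkerson and Gross (or by direct verification that every square submatrix has determinant in $\{-1,0,1\}$), such a matrix is totally unimodular.

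No step appears to be a real obstacle: the only point requiring care is the vertex enumeration, where one must verify that the $d+1$ listed points exhaust all vertices and are affinely independent; both follow cleanly from the linear description of $B^U(d)$.
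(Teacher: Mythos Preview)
Your proposal is correct and aligns with the paper's treatment: the paper states this as an observation without a formal proof, merely noting just before it that $B^U(d)$ is a $d$-dimensional polytope with $d+1$ vertices and hence a simplex whose skeleton is $K_{d+1}$. Your argument supplies exactly the details behind that assertion (the vertex enumeration via the facet description, affine independence, and the consecutive-ones/TU conclusion), so there is nothing to add.
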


\begin{lemma}\label{le:rank-unary} Given pairwise distinct  $\alpha_1,\dots,\alpha_{\ell}\in \{0,\dots,d-1\}$, we have that \[\rk_{B^U(d)}(\alpha_1,\dots,\alpha_{\ell})=\ell.\]
\end{lemma}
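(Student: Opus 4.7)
The plan is to invoke Observation~\ref{obs:skeleton}, which reduces $\rk_{B^U(d)}(\alpha_1,\dots,\alpha_\ell)$ to the optimal value of a set covering problem whose constraint matrix $A$ has, for each $j\in[\ell]$, one row per $\alpha_j$-edge of $\sk(B^U(d))$, and the row is its indicator vector in $\{0,1\}^d$. I would then show that the optimum is exactly $\ell$ by matching a lower and upper bound.

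First I would describe the $\alpha_j$-edges explicitly. By Observation~\ref{obs:unary}, the vertices of $B^U(d)$ are $(i,y(i))$ for $i=0,\dots,d$, and since $B^U(d)$ is a simplex, every pair of vertices is an edge of $\sk(B^U(d))$. The edge joining $(i,y(i))$ and $(m,y(m))$ with $i<m$ is an $\alpha_j$-edge precisely when $i\le\alpha_j$ and $m\ge\alpha_j+1$, and its indicator vector is the characteristic vector of the interval $\{i+1,\dots,m\}$. In particular, every $\alpha_j$-edge's indicator vector has a $1$ in coordinate $\alpha_j+1$.

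For the lower bound, I would highlight the ``shortest'' $\alpha_j$-edge: the edge between $(\alpha_j,y(\alpha_j))$ and $(\alpha_j+1,y(\alpha_j+1))$ is an $\alpha_j$-edge whose indicator vector is the unit vector $e_{\alpha_j+1}$. This row of $A$ forces every feasible $z\in\{0,1\}^d$ to satisfy $z_{\alpha_j+1}=1$. Since $\alpha_1,\dots,\alpha_\ell$ are pairwise distinct, the indices $\alpha_1+1,\dots,\alpha_\ell+1$ are pairwise distinct, so any feasible cover must use at least $\ell$ columns. For the matching upper bound, I would exhibit the cover $z^\star\in\{0,1\}^d$ with $z^\star_{\alpha_j+1}=1$ for $j\in[\ell]$ and $z^\star_k=0$ otherwise: by the observation above, every row of $A$ has a $1$ in some position $\alpha_j+1$, hence is covered by $z^\star$. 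Therefore $\mathbf{1}z^\star=\ell$ realizes the minimum.

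There is essentially no real obstacle here; the lemma falls out once the simplex structure of $B^U(d)$ and the consecutive-ones form of the indicator vectors make the unit-vector row visible for each $j$. The only thing to be careful about is to verify that the pairwise-distinctness hypothesis on the $\alpha_j$'s is genuinely used (it is, and only in the lower bound, to conclude that the $\ell$ forced columns are distinct).
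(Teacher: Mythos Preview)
Your proposal is correct and follows essentially the same approach as the paper: both reduce to the set covering formulation via Observation~\ref{obs:skeleton}, use Observation~\ref{obs:unary} to identify the indicator vectors, and exhibit the cover $z$ with $z_{\alpha_j+1}=1$ as optimal. Your version is simply more explicit about the lower bound (singling out the unit-vector rows $e_{\alpha_j+1}$) than the paper's one-line proof.
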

\begin{proof}
By Observations \ref{obs:skeleton} and \ref{obs:unary}, the vector $z\in\{0,1\}^d$ defined by $z_{\alpha_1+1}=\dots=z_{\alpha_{\ell}+1}=1$ and $z_k=0$ otherwise is an optimal solution to the set covering problem associated with $\rk_{B^U(d)}(\alpha_1,\dots,\alpha_{\ell})$.
\end{proof}

We now turn our attention to the full binarization.
For $i\in[d]$, we denote by $\mathbf e_i$ the canonical vector with the $i$-th component equal to one.

 \begin{observation}\label{obs:full}
 The vertices of  $B^F(d)$ are  $(0,0)$ and $(i,\mathbf{e}_i)$ for all $i\in[d]$. 
  The indicator vector of edge  $((0,0),(i,\mathbf{e}_i))$ is $\mathbf{e}_i$. 
  The indicator vector of edge  $((i,\mathbf{e}_i),(j,\mathbf{e}_j))$, where $i\ne j$, is $\mathbf{e}_i+\mathbf{e}_j$.
  \end{observation}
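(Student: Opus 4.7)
The plan is to recognize $B^F(d)$ as a disguised standard simplex, from which both the vertex list and the edge structure (and hence the indicator vectors) will follow almost immediately.

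First I will use the equation $x=\sum_{i=1}^d i\cdot y_i$ to project $B^F(d)$ bijectively onto the polytope $\Delta:=\{y\in[0,1]^d:\sum_{i=1}^d y_i\le 1\}$ in the $y$-space. The inequalities $y_i\ge 0$ together with $\sum_{i=1}^d y_i\le 1$ already force $y_i\le 1$ for each $i$, so $\Delta$ is exactly the standard $d$-simplex, whose vertices are $\mathbf 0$ and $\mathbf e_1,\dots,\mathbf e_d$. Lifting these points back through $x=\sum_{i=1}^d i\cdot y_i$ gives precisely the $d+1$ claimed vertices $(0,\mathbf 0)$ and $(i,\mathbf e_i)$ for $i\in[d]$, and no other points of $B^F(d)$ can be vertices since the projection onto the $y$-coordinates is an affine bijection.

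Next, since $B^F(d)$ lies in the $d$-dimensional affine subspace cut out by the single equation and has exactly $d+1$ affinely independent vertices, it is itself a $d$-simplex. Consequently its skeleton is the complete graph $K_{d+1}$, so every pair of distinct vertices spans a $1$-dimensional face. The indicator vectors are then read off directly from their definition: for the edge from $(0,\mathbf 0)$ to $(i,\mathbf e_i)$ the $y$-entries agree everywhere except at coordinate $i$ (where they are $0$ and $1$), producing the indicator vector $\mathbf e_i$; for the edge from $(i,\mathbf e_i)$ to $(j,\mathbf e_j)$ with $i\ne j$ they agree except at coordinates $i$ and $j$, producing $\mathbf e_i+\mathbf e_j$.

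I do not expect any real obstacle. The only substantive geometric content is the identification of $B^F(d)$ with a standard $d$-simplex; once that is in hand, the vertex enumeration, the completeness of the skeleton, and the two indicator-vector formulas are all forced by inspection.
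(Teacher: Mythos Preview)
Your proposal is correct and matches the paper's approach: the paper does not give a formal proof of this observation, but immediately before it remarks that $B^U(d)$ and $B^F(d)$ are $d$-dimensional polytopes with $d+1$ vertices, hence simplices with complete skeleton, which is exactly the geometric content you supply. The rest is, as you say, read off by inspection.
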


\begin{lemma}\label{le:rank-full} Given $\alpha_1,\dots,\alpha_{\ell}\in \{0,\dots,d-1\}$, 
we have that 
\[\rk_{B^F(d)}(\alpha_1,\dots,\alpha_{\ell})=d-\min_{j\in[\ell]}\alpha_j.\]
\end{lemma}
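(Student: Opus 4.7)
The plan is to analyze the set-covering instance arising from Observation \ref{obs:skeleton} using the explicit structure of the skeleton of $B^F(d)$ described in Observation \ref{obs:full}. Let $\alpha^* := \min_{j\in[\ell]}\alpha_j$. I will show the upper bound $\rk_{B^F(d)}(\alpha_1,\dots,\alpha_\ell) \leq d-\alpha^*$ by exhibiting a feasible cover of this size, and the matching lower bound using the singleton rows of the covering matrix.

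First, I would enumerate the $\alpha_j$-edges. By Observation \ref{obs:full}, the vertices of $B^F(d)$ are $(0,0)$ and $(i,\mathbf{e}_i)$ for $i\in[d]$, and for any $\alpha\in\{0,\dots,d-1\}$ the $\alpha$-edges of the skeleton are exactly: (a) edges $((0,0),(i,\mathbf{e}_i))$ with $i>\alpha$, having indicator vector $\mathbf{e}_i$; and (b) edges $((i,\mathbf{e}_i),(k,\mathbf{e}_k))$ with $1\le i\le \alpha < k\le d$, having indicator vector $\mathbf{e}_i+\mathbf{e}_k$. Collecting across all $j\in[\ell]$, the rows of the covering matrix are thus all $\mathbf{e}_i$ with $i>\alpha_j$ for some $j$, and all $\mathbf{e}_i+\mathbf{e}_k$ with $1\le i \le \alpha_j < k \le d$ for some $j$.

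For the upper bound, define $z\in\{0,1\}^d$ by $z_i=1$ iff $i>\alpha^*$, so $\mathbf{1}z = d-\alpha^*$. Every row of type (a) reads $\mathbf{e}_i$ with $i>\alpha_j\ge \alpha^*$, so $z_i=1$ and the row is covered. Every row of type (b) reads $\mathbf{e}_i+\mathbf{e}_k$ with $k>\alpha_j\ge \alpha^*$, so $z_k=1$ and the row is covered. Thus $z$ is feasible and $\rk_{B^F(d)}(\alpha_1,\dots,\alpha_\ell)\le d-\alpha^*$.

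For the lower bound, pick any index $j^*$ achieving the minimum, i.e.\ $\alpha_{j^*}=\alpha^*$. The $\alpha^*$-edges of type (a) contribute the singleton rows $\mathbf{e}_i$ for every $i\in\{\alpha^*+1,\dots,d\}$. Any feasible $z$ must therefore satisfy $z_i\ge 1$ for each such $i$, giving $\mathbf{1}z \geq d-\alpha^*$. Combined with the upper bound, this yields $\rk_{B^F(d)}(\alpha_1,\dots,\alpha_\ell)=d-\alpha^*$. There is no real obstacle here: the only point that requires a small amount of care is the enumeration of $\alpha$-edges into the two families (a) and (b), since the vertex $(0,0)$ is special and its incident edges contribute the singleton rows that drive the lower bound.
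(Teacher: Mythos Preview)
Your proof is correct and follows essentially the same approach as the paper: both enumerate the $\alpha$-edges of $\sk(B^F(d))$ via Observation~\ref{obs:full} and identify the optimal set-covering solution $z$ with $z_k=1$ iff $k>\alpha^*=\min_j\alpha_j$. Your version is simply more explicit than the paper's, which asserts optimality of $z$ directly, whereas you separately verify feasibility (upper bound) and use the singleton rows $\mathbf{e}_i$, $i>\alpha^*$, to force the lower bound.
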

\begin{proof}  Given $\alpha \in \{0,\dots,d-1\}$, the $\alpha$-edges are $((0,0), (\gamma,\mathbf{e}_{\gamma}))$ and $((\beta,\mathbf{e}_{\beta}), (\gamma,\mathbf{e}_\gamma))$ where $\beta \le \alpha $ and $\gamma \ge \alpha+1$. 
  Hence, by Observation \ref{obs:skeleton}, the optimal solution to the set covering problem associated with $\rk_{B^F(d)}(\alpha_1,\dots,\alpha_{\ell})$ is the vector $z$ defined by $z_k=1$ for $k\ge \min_{j\in[\ell]}\alpha_j +1$, $z_k=0$ otherwise.  
\end{proof}

Note that Lemmas \ref{le:rank-unary} and \ref{le:rank-full} show that $\rk_{B^F(d)}(\alpha_1,\dots,\alpha_{\ell})\geq\rk_{B^U(d)}(\alpha_1,\dots,\alpha_{\ell})$  for any choice of pairwise distinct  $\alpha_1,\dots,\alpha_{\ell}\in \{0,\dots,d-1\}$. Therefore the unary binarization is preferable, according to this criterion.

\subsection{Affine binarizations} 

 The constraints of the set covering problem related to the rank of a natural binarization $B$, as defined in  Observation \ref{obs:skeleton}, are associated with the edges of the skeleton of $B$. 
We remark that the skeleton of $B$ is in general different from the skeleton of its projection $\pi_y(B)$.

\begin{observation}\label{obs:polytopes}

Let $B$ be a binarization with variables $(x,y)\in [0,k]\times [0,1]^d$.
\begin{enumerate}

\item If $|V(\pi_y(B))|=|V(B)|$ then $\pi_y$ is a bijection between $V(B)$ and $V(\pi_y(B))$.

\item If $|V(\pi_y(B))|=|V(B)|$  and $V'$ is the set of vertices of a face of $\pi_y(B)$, then the preimage of $V'$ under $\pi_y$ is the set of vertices of a 
face of $B$.

\item If $B$ is integral, then $|V(B)|=|V(\pi_y(B))|$.

\item If $B$ is affine, then $B$ and $\pi_y(B)$ are isomorphic.
\end{enumerate}
\end{observation}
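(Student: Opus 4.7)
My plan is to prove the four items in order, since each builds on the ones before. The underlying workhorse throughout is the standard fact that for any polytope and any linear projection, every vertex of the projection is the image of some vertex of the original polytope, i.e.\ $V(\pi_y(B)) \subseteq \pi_y(V(B))$.

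For item 1, I would simply chain the inequalities $|V(\pi_y(B))| \le |\pi_y(V(B))| \le |V(B)|$. The hypothesis $|V(B)| = |V(\pi_y(B))|$ then forces equality throughout: equality in the left-hand inequality yields $\pi_y(V(B)) = V(\pi_y(B))$, while equality in the right-hand inequality means $\pi_y$ is injective on $V(B)$. Together these give the claimed bijection. For item 3, integrality means every vertex of $B$ has $y$-coordinate in $\{0,1\}^d$, and Observation \ref{obs:onex} then forces $\pi_y$ to be injective on $V(B)$, so $|\pi_y(V(B))| = |V(B)|$. For the reverse, $\pi_y(B) \subseteq [0,1]^d$ and every element of $\pi_y(V(B)) \subseteq \{0,1\}^d$ is a vertex of the cube, hence also a vertex of the smaller polytope $\pi_y(B)$; combined with the always-true inclusion $V(\pi_y(B)) \subseteq \pi_y(V(B))$, this yields $|V(\pi_y(B))| = |V(B)|$.

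For item 2, I would write a face $F$ of $\pi_y(B)$ as $\{y \in \pi_y(B) : c \cdot y = c_0\}$ for some valid inequality $c \cdot y \le c_0$, then lift to the face $F' := \{(x,y) \in B : c \cdot y = c_0\}$ of $B$, noting that $c_0$ remains the correct maximum since $\pi_y(B)$ and $B$ have the same $y$-range. Using the bijection from item 1, a vertex of $B$ lies in $F'$ exactly when its $\pi_y$-image lies in $F$, that is, in $V'$. The one technical check is that each point of $B$ projecting to a vertex $y^* \in V'$ is itself a vertex of $B$: the fiber $\pi_y^{-1}(y^*) \cap B$ is a face of $B$ (cut out by the supporting hyperplane exposing $y^*$, lifted to $(x,y)$-space), by item 1 it contains a unique vertex of $B$, and being a bounded polytope it therefore equals that single vertex. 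Thus the preimage of $V'$ under $\pi_y$ is a finite set of vertices of $B$, namely exactly $V(F')$.

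Item 4 is essentially immediate: affineness gives a two-sided inverse $y \mapsto (f(y), y)$ to the projection $\pi_y|_B$, and both maps are affine, so they constitute an affine isomorphism between $B$ and $\pi_y(B)$. The only real obstacle I anticipate is the fiber-is-a-point argument in item 2, since it is where one really needs to combine the hypothesis with boundedness; the remaining parts reduce to cardinality chases or direct inspection of the defining equation $x = f(y)$.
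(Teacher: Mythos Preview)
Your proposal is correct and follows essentially the same route as the paper's proof: lift a supporting inequality of $\pi_y(B)$ to obtain the corresponding face of $B$ in item~2, use the $0/1$ structure of vertices for item~3, and read off the affine inverse $y\mapsto(f(y),y)$ for item~4. Your argument is in fact more careful than the paper's in two places: in item~2 you verify that the full fiber over each $y^*\in V'$ collapses to a single vertex (the paper implicitly reads ``preimage of $V'$'' through the bijection of item~1 and skips this check), and in item~3 you explicitly invoke Observation~\ref{obs:onex} to get injectivity of $\pi_y$ on $V(B)$, which the paper leaves tacit.
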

\begin{proof} 
1. trivially holds.

Let $F=\conv(V')$ be a face of  $\pi_y(B)$, where $V'\subseteq V(\pi_y(B))$, and let $ay\le \beta$ be an inequality that is valid for $\pi_y(B)$ and exposes $F$. Then $ay\le \beta$ is valid for $B$ and exposes precisely the preimage of  $F=\conv(V')$, which is therefore a face of $B$. By 1., the preimage of $V'$ is a subset of $V(B)$, and this proves 2.

If $(x,y)$ is a vertex of $B$ and $B$ is an integral binarization, then $y\in\{0,1\}^d$. Therefore $y$ is also a vertex of $\pi_y(B)$, as this set is contained in $[0,1]^d$. This proves 3.

If $B$ is an affine binarization, there is an equation of the form $x=ay+\beta$  satisfied by all points in $B$. This immediately implies 4.
\end{proof}

\begin{corollary}\label{cor:subgraph}
If $B$ is an integral binarization, then $\sk(\pi_y(B))$  is a subgraph of $\sk(B)$ on the same vertex set. If $B$ is an affine binarization, then  $\sk(B)$ and $\sk(\pi_y(B))$  are isomorphic.  
\end{corollary}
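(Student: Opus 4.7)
The plan is to extract both claims directly from Observation \ref{obs:polytopes}, treating the integral case first and then specializing the reasoning to the affine case. In both situations, the projection $\pi_y$ will serve as the map that identifies vertices of $\sk(B)$ with those of $\sk(\pi_y(B))$.

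For the first statement, I would start from Observation \ref{obs:polytopes}(3), which states $|V(B)|=|V(\pi_y(B))|$ when $B$ is integral. Combined with part (1) of the same observation, this yields a bijection $\pi_y\colon V(B)\to V(\pi_y(B))$, which I would use to identify the vertex sets of the two skeletons. The core step is then to verify that this identification sends every edge of $\sk(\pi_y(B))$ to an edge of $\sk(B)$. So let $\{y^1,y^2\}$ be an edge of $\sk(\pi_y(B))$, i.e., the vertex set of a 1-dimensional face of $\pi_y(B)$. By part (2) of the observation, its preimage $\{(x^1,y^1),(x^2,y^2)\}\subseteq V(B)$ is the vertex set of a face of $B$; since this set has exactly two elements, the face must be 1-dimensional, and hence its two vertices are adjacent in $\sk(B)$. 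This yields the desired subgraph containment on the identified vertex set.

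For the second statement, I would simply invoke Observation \ref{obs:polytopes}(4): since $B$ is affine, $B$ and $\pi_y(B)$ are isomorphic as polytopes via $\pi_y$, so their face lattices, and in particular their skeletons, are isomorphic as graphs.

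I don't expect any serious obstacle here, since the corollary is essentially a repackaging of the previous observation in graph-theoretic language. The only point worth flagging is that in the integral case the containment can be strict: an edge of $\sk(B)$ need not project to an edge of $\sk(\pi_y(B))$, because distinct edges of $B$ could project onto overlapping segments or into the interior of higher-dimensional faces of $\pi_y(B)$. This does not affect the direction of the containment that we assert, and it is precisely this collapsing phenomenon that is ruled out in the affine case (where $\pi_y$ is an affine bijection between $B$ and $\pi_y(B)$), explaining why the containment there upgrades to an isomorphism.
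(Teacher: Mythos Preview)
Your proposal is correct and is exactly the intended derivation: the paper states the corollary immediately after Observation~\ref{obs:polytopes} without giving a separate proof, and your argument simply unpacks how each part of the observation yields the corresponding claim. The one substantive step you make explicit---that a face of $B$ whose vertex set has exactly two elements must be $1$-dimensional---is indeed what is needed to pass from part~(2) of the observation to adjacency in $\sk(B)$.
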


We say that a binarization $B\subseteq \R\times [0,1]^d$ is a $d$-hypercube binarization, or simply a hypercube binarization, if $\pi_y(V(B))=\{0,1\}^d$ and $\pi_x(V(B))=\{0,\dots,2^d-1\}$, i.e., if the vertices of $B$ are in one-to-one correspondence with the vertices of the $d$-hypercube, and each has a different $x$-component. We will use the fact that a hypercube binarization is perfect (hence natural, exact and integral, see Observation \ref{obs:twox}).

The logarithmic binarization $B^L(d)$ is a $d$-hypercube binarization with vertices $(x, y)$, where $x\in\{0,\dots,2^d-1\}$ and $y=(x)_2$ is the reverse of the vector in $\{0,1\}^d$ expressing $x$ in base 2. 

In the following, we will write $B^L$ instead of $B^L(d)$ when the dimension $d$ is clear from the context.

\begin{lemma}\label{le:onlyaffine}
Up to permuting and complementing variables, the logarithmic binarization  is the only hypercube binarization that is affine.
\end{lemma}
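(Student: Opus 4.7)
The plan is to translate the hypercube-binarization and affineness hypotheses into a statement about subset sums, normalize via the two allowed operations so that the coefficient vector is nonnegative with zero constant term, and then prove combinatorially that its coordinates must be the powers of two.

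First, since $B$ is a $d$-hypercube binarization, Observation \ref{obs:onex} implies that the projection $\pi_y$ is a bijection between $V(B)$ and $\{0,1\}^d$; the inverse assignment $y\mapsto x$ is a bijection $\sigma\colon\{0,1\}^d\to\{0,\dots,2^d-1\}$, and the vertices of $B$ are exactly the pairs $(\sigma(y),y)$. If $B$ is affine, there exist $a\in\R^d$ and $\beta\in\R$ with $x=ay+\beta$ for every $(x,y)\in B$; restricting to vertices yields $\sigma(y)=ay+\beta$ for every $y\in\{0,1\}^d$. Plugging in $y=0$ and $y=\mathbf{e}_i$ shows that $\beta\in\Z$ and $a_i\in\Z$ for each $i\in[d]$.

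Next I would normalize. Permuting $y$-coordinates permutes the entries of $a$, while complementing $y_i$ (substituting $y_i=1-y_i'$) replaces $a_i$ by $-a_i$ and $\beta$ by $\beta+a_i$. An $a_i=0$ would make $\sigma$ non-injective, so after complementing every $y_i$ with $a_i<0$ we may assume $a_i\ge 1$ for all $i$. Then $\sigma(0)=\beta$ is the minimum value of $\sigma$, forcing $\beta=0$. The lemma thereby reduces to the following claim: \emph{if $a_1,\dots,a_d$ are positive integers whose subset sums $\sum_{i\in S}a_i$, $S\subseteq[d]$, are exactly $\{0,1,\dots,2^d-1\}$ (each value attained once), then, after a permutation, $a_i=2^{i-1}$ for every $i$}.

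I would prove this claim by induction on $k$, showing that one may relabel so that $\{a_1,\dots,a_k\}=\{1,2,\dots,2^{k-1}\}$ for every $k\le d$. For $k=1$, the subset sum equal to $1$ must be a singleton $\{i\}$ with $a_i=1$, unique by bijectivity. For the inductive step, the subset sums of $\{a_1,\dots,a_k\}=\{1,\dots,2^{k-1}\}$ already realize every element of $\{0,\dots,2^k-1\}$ exactly once (binary expansion). Hence any $a_j$ with $j>k$ satisfying $a_j<2^k$ would duplicate an existing singleton sum, so $a_j\ge 2^k$ for all $j>k$. Consequently every non-empty $T\subseteq\{k+1,\dots,d\}$ contributes at least $2^k$ to any subset sum, so the equation $\sum_{i\in S}a_i=2^k$ has a solution only when $S=\{j\}$ and $a_j=2^k$ for some $j>k$. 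Relabelling this index as $k+1$ closes the induction.

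The main obstacle is this inductive step, and the crucial feature making it go through is that the ``old'' coefficients $\{1,\dots,2^{k-1}\}$ completely fill the interval $\{0,\dots,2^k-1\}$; this rigidity forces each successive coefficient to be exactly the next power of two. Once the combinatorial claim is established, the final permutation together with the earlier complementations carry $B$ to $B^L(d)$, proving the lemma.
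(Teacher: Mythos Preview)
Your proof is correct and follows essentially the same route as the paper: normalize so that $\beta=0$ and all $a_i\ge 1$, then argue inductively that the coefficients must be the powers of two. The only cosmetic differences are the order of normalization (you complement to make the $a_i$ positive and then deduce $\beta=0$; the paper complements to send the vertex with $x=0$ to $y=0$ and then reads off $a_i\ge 1$) and the phrasing of the induction (you track which $a_j$ equals $2^k$, the paper tracks which $y$ satisfies $\sigma(y)=2^h$), but these are equivalent. One tiny wording issue: in your inductive step, an $a_j<2^k$ with $j>k$ duplicates an existing \emph{subset} sum, not necessarily a singleton one; the conclusion is unaffected.
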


\begin{proof}
Let $B$ be an affine hypercube binarization. Then there exist $a\in\R^d$ and $\beta\in\R$ such that $x=ay+\beta$ for every $(x,y)\in B$. We first show that we may assume that $\beta=0$. Indeed, consider the vertex $(0,y^0)$ of $B$. By complementing the variables such that $y^0_i=1$, we obtain $y^0=0$, hence $\beta=0$. 

Since $B$ is affine, by Observation \ref{obs:polytopes}, $\sk(B)$ is the $d$-hypercube. Vertices $(a_i,\mathbf{e}_i)$ show that $a_1,\dots,a_d \ge1$. Using  $\pi_x(V(B))=\{0,\dots,2^d-1\}$, we now verify that  $\{a_1,\dots,a_d\}=\{2^0,\dots,2^{d-1}\}$.

Let $y^i$ be such that $(i,y^i)\in B$. Notice that, for any $j$ such that $y^i_j=1$, we have $0\leq a(y^i-e_j)=i-a_j\le i-1$. Now, we show that $y^{2^h}\in \{\mathbf{e}_1,\dots, \mathbf{e}_d\}$ for $h=0,\dots, d-1$ by induction on $h$. For $h=0$, the claim is true as otherwise there are at least two $j$'s with $y^1_j=1$, but only one value for $a(y^1-\mathbf{e}_j)=0$. Now, let $h>0$. By induction, and without loss of generality, we can assume that $y^{2^0}=\mathbf{e}_1$, $\dots$, $y^{2^{h-1}}=\mathbf{e}_{h}$,  implying that $y^i=(i)_2$ for $i\leq 2^{h}-1$. Hence, we must have $y^{2^h}_j=1$  for at least one $j>h$. But, on the other hand, if there are two such indices $j$,  then we reach a contradiction as before, as there would be two values $i', i''$ smaller than $2^h$ but different from $0,\dots, 2^h-1$; we argue similarly if there is another index $j'\leq h$  with $y^i_{j'}=1$.
\end{proof}

\subsection{The rank of hypercube binarizations} \label{sec:hypercube}

In this section we  show that, among hypercube binarizations, the logarithmic binarization $B^L$ minimizes $\rk_B(\alpha_1,\dots,\alpha_\ell)$ for any choice of $\alpha_1,\dots,\alpha_\ell$.
 Given a hypercube binarization $B$, we let $G(B)$ be the graph with vertex set $V(B)$, where $(x_i,y_i)$, $(x_j,y_j)$ in $V(B)$ are adjacent if and only if the 0/1 vectors $y_i$ and $y_j$ differ in exactly one component. Hence, $G(B)$ is isomorphic to the $d$-dimensional hypercube graph.
 
By Observation \ref{obs:polytopes} and Corollary \ref{cor:subgraph}, we have that $G(B)=\sk(\pi_y(B))$ and $G(B)$ is a subgraph of $\sk(B)$ having the same vertex set. If the binarization is a logarithmic binarization $B^L$, then  $G(B^L)=\sk(B^L)$ because $\sk(\pi_y(B^L))=\sk(B^L)$.

For $\alpha\in \{0,\dots,2^d-2\}$ we let $V_{\alpha}:=\{(x,y)\in V(B):x\le \alpha\}$, and for $i\in [d]$ we define $V_0^i:=\{(x,y)\in V(B): y_i=0\}$ and $V_1^i:=\{(x,y)\in V(B): y_i=1\}$. We define an edge $e$ of $G(B)$ to be of {\em type $i$} if $e\in \delta(V_0^i)$, and we write $\type(e)=i$.

\begin{observation}\label{obs:hypercube}
Given a $d$-hypercube binarization $B$ and  its hypercube graph $G(B)$: \begin{enumerate}
    \item For $\alpha\in \{0,\dots,2^d-2\}$, the cut $\delta(V_{\alpha})$ consists of the $\alpha$-edges of $G(B)$.
\item For $i\in [d]$, the cut $\delta(V^i_0)$ consists of the edges of $G(B)$ whose indicator vector is $\mathbf{e}_i$. \item The cuts $\delta(V^i_0)$, $i\in [d]$, partition the edges of $G(B)$ into $d$ perfect matchings.
\end{enumerate}
\end{observation}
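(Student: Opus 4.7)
The plan is to unpack the three claims directly from the definitions of $G(B)$, of $V_\alpha$, $V^i_0$, $V^i_1$, of $\alpha$-edge and of indicator vector; none of the three parts requires anything beyond bookkeeping once the right reformulation is in place.

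For part 1, the plan is to recall that $B$ being a $d$-hypercube binarization means $\pi_x(V(B))=\{0,\dots,2^d-1\}\subseteq\Z$. Hence for every edge $e=((x^u,y^u),(x^v,y^v))$ of $G(B)$, the values $x^u,x^v$ are integers, so either both lie in $\{0,\dots,\alpha\}$, or both lie in $\{\alpha+1,\dots,2^d-1\}$, or one lies on each side. In the last case the edge is by definition an $\alpha$-edge; in the first two cases it is not, and in those cases $e$ does not belong to $\delta(V_\alpha)$. This gives the equality $\delta(V_\alpha)=\{\text{$\alpha$-edges of } G(B)\}$.

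For part 2, the plan is to use that, by definition of $G(B)$, an edge $e=((x^u,y^u),(x^v,y^v))$ of $G(B)$ has the property that $y^u$ and $y^v$ differ in exactly one coordinate, say coordinate $j$. Then the indicator vector of $e$ is $\mathbf{e}_j$. Now $e\in\delta(V^i_0)$ iff one endpoint has $y_i=0$ and the other has $y_i=1$, iff $j=i$, iff the indicator vector of $e$ is $\mathbf{e}_i$.

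For part 3, the plan is to combine the observations from part 2. Since every edge of $G(B)$ has a unique type $j\in[d]$, the sets $\delta(V^i_0)$, $i\in[d]$, form a partition of the edge set of $G(B)$. To check that each $\delta(V^i_0)$ is a perfect matching, fix $i\in[d]$ and a vertex $(x,y)\in V(B)$: since $\pi_y(V(B))=\{0,1\}^d$ (definition of hypercube binarization) and $\pi_y$ is injective on $V(B)$ (by Observation \ref{obs:polytopes}.3, as a hypercube binarization is perfect, hence integral), there is exactly one vertex $(x',y')\in V(B)$ with $y'$ obtained from $y$ by flipping coordinate $i$, and this is the unique neighbor of $(x,y)$ in $\delta(V^i_0)$. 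This completes the plan.

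No step here should be an obstacle: the main thing to be careful about is citing the right hypotheses (integrality of $B$ via perfectness, and the definition of $G(B)$ as the image of $\pi_y$ restricted to vertices) to ensure that the bijection between $V(B)$ and $\{0,1\}^d$ is used correctly in part 3.
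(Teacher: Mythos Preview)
Your proposal is correct; each of the three parts is exactly the straightforward unpacking of the definitions that the observation calls for. The paper itself states Observation \ref{obs:hypercube} without proof, treating it as immediate, so your write-up is simply the expected verification (the only minor comment is that in part 3 the bijection between $V(B)$ and $\{0,1\}^d$ is already built into the definition of a $d$-hypercube binarization, so invoking Observation \ref{obs:polytopes}.3 is not strictly needed).
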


We now argue that, given a hypercube binarization $B$ and $\alpha_1,\dots,\alpha_\ell\in\{0,\dots, 2^d-2\}$, in order to compute $\rk_B(\alpha_1,\dots,\alpha_\ell)$ one can restrict to the set covering problem related to $G(B)$ instead of $\sk(B)$ (as in Observation \ref{obs:skeleton}).   
This allows us to draw conclusions on general (not necessarily affine) hypercube binarizations.

\begin{lemma}\label{lem:logbincube}  
Let $B$ be a $d$-hypercube binarization, and $G(B)$ be the corresponding graph. For $\alpha_1,\dots,\alpha_\ell\in\{0,\dots, 2^d-2\}$, we have
\[\rk_B(\alpha_1,\dots,\alpha_\ell) = \big|\{i:\delta(V_0^i)\cap\textstyle\bigcup_{j=1}^\ell\delta(V_{\alpha_j})\ne \emptyset\}\big|.
\]
\end{lemma}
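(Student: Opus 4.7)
The plan is to use Observation \ref{obs:skeleton}, which says that $\rk_B(\alpha_1,\dots,\alpha_\ell)$ is the optimum of the set covering problem whose constraint matrix has, as rows, the indicator vectors of all $\alpha_j$-edges of $\sk(B)$ over $j\in[\ell]$. Let $S:=\{i:\delta(V_0^i)\cap\bigcup_{j=1}^\ell\delta(V_{\alpha_j})\neq\emptyset\}$. I will show that $|S|$ equals this optimum, giving two inequalities.

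For the lower bound, I would observe that for every $i\in S$ there exists, by definition of $S$ and by parts 1 and 2 of Observation \ref{obs:hypercube}, an edge of $G(B)$ of type $i$ that is an $\alpha_j$-edge for some $j$. By Observation \ref{obs:hypercube}(2) this edge has indicator vector $\mathbf{e}_i$, and since $G(B)$ is a subgraph of $\sk(B)$ on the same vertex set (Corollary \ref{cor:subgraph}), this edge contributes the row $\mathbf{e}_i$ to the set covering matrix. Any feasible cover must then select coordinate $i$ for each $i\in S$, so the optimum is at least $|S|$.

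For the upper bound, I would show that the characteristic vector of $S$ itself covers every row. Take any $\alpha_j$-edge $(u,v)=((x^u,y^u),(x^v,y^v))$ of $\sk(B)$, with $x^u\le\alpha_j$ and $x^v\ge\alpha_j+1$. Let $D\subseteq[d]$ be the set of coordinates where $y^u$ and $y^v$ differ (this is the support of the indicator vector of $(u,v)$). In the hypercube graph $G(B)$, consider a shortest path $u=w_0,w_1,\dots,w_m=v$: it has length $|D|$ and each edge $(w_r,w_{r+1})$ flips exactly one coordinate, with each coordinate in $D$ being flipped exactly once along the path. Consider the sequence of booleans ``is $x^{w_r}\le\alpha_j$?''; it starts true and ends false, so there is an index $r$ with $x^{w_r}\le\alpha_j$ and $x^{w_{r+1}}\ge\alpha_j+1$. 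That edge is an $\alpha_j$-edge of $G(B)$, and its type $i^*$ belongs to $D$ (it is the unique coordinate flipped) and lies in $S$ by the definition of $S$. Hence the indicator vector of $(u,v)$ has a $1$ in position $i^*\in S$, so the characteristic vector of $S$ is feasible and the optimum is at most $|S|$.

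The main subtlety is the upper bound: since $\sk(B)$ may have many more edges than $G(B)$ (an $\alpha_j$-edge of $\sk(B)$ can flip several coordinates at once), we need the ``path argument'' in $G(B)$ to witness that at least one of those flipped coordinates corresponds to a genuine $\alpha_j$-edge of $G(B)$ and is therefore in $S$. The use of a shortest path in the hypercube graph is what guarantees that this witnessed coordinate lies among the coordinates where $y^u$ and $y^v$ actually differ.
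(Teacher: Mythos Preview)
Your proof is correct and follows essentially the same approach as the paper: both directions use the set covering formulation from Observation \ref{obs:skeleton}, the lower bound comes from the rows $\mathbf{e}_i$ contributed by the $\alpha_j$-edges of $G(B)$ (which are edges of $\sk(B)$ by Corollary \ref{cor:subgraph}), and the upper bound uses the same shortest-path argument in $G(B)$ to find, along any $\alpha_j$-edge of $\sk(B)$, a type in $S$ that already appears in its indicator vector.
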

\begin{proof}
Let $S:=\{i:\,\delta(V_0^i)\cap\textstyle\bigcup_{j=1}^\ell\delta (V_{\alpha_j})=\emptyset\}$.
We will show that the  incidence vector of $S$ is an optimal solution of the set covering problem relative to $\rk_B(\alpha_1,\dots,\alpha_\ell)$ (see Observation \ref{obs:skeleton}).

First, for $j\in [\ell]$, consider $e\in \delta(V_{\alpha_j})$: it is an $\alpha_j$-edge of $\sk(B)$  whose indicator vector is $\mathbf{e}_{\type(e)}$. 
This shows that  $S$ is contained in any solution of the aforementioned set covering problem. 

On the other hand, we now show that $S$ is a feasible solution of the set covering problem, concluding the proof. In particular we show that, for any fixed $j\in [\ell]$ and any $\alpha_j$-edge of $G(B)$ with extreme points $(x^1,y^1)$, $(x^2,y^2)$ and indicator vector $t$,
 there is an edge $e\in \delta(V_{\alpha_j})$ such that $t_{\type(e)}$=1.
 
 Consider any shortest path $\rho$ in $G(B)$ joining nodes $(x^1,y^1)$, $(x^2,y^2)$: it contains exactly one edge of type $i$ for each nonzero coordinate of $t$. Since we must have, without loss of generality, $x^1\leq \alpha_j$ and $x^2\geq \alpha_j+1$, there is an edge $e$ of $\rho$ with $e\in \delta(V_{\alpha_j})$. 
\end{proof}

Given a hypercube binarization $B$, its hypercube graph $G(B)$ and $\alpha_1,\dots,\alpha_{\ell}\in\{0,\dots,2^d-2\}$, we let $f_B(\alpha_1,\dots,\alpha_\ell):=|\{i:\delta(V_0^i)\cap\textstyle\bigcup_{j=1}^\ell\delta(V_{\alpha_j})=\emptyset\}|$.
As   $|\{i:\delta(V_0^i)\cap\textstyle\bigcup_{j=1}^\ell\delta(V_{(\alpha_j})\ne \emptyset\}|=d-f_B(\alpha_1,\dots,\alpha_\ell)$, by Lemma \ref{lem:logbincube} we have the following:

\begin{corollary}\label{cor:rankcube}  
Let $B$ be a $d$-hypercube binarization, and $G(B)$ be the corresponding graph. For $\alpha_1,\dots,\alpha_\ell\in\{0,\dots, 2^d-2\}$, we have
\[\rk_B(\alpha_1,\dots,\alpha_\ell) = d-f_B(\alpha_1,\dots,\alpha_\ell).
\]
\end{corollary}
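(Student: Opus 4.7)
The plan is to observe that Corollary \ref{cor:rankcube} is essentially a restatement of Lemma \ref{lem:logbincube} via complementary counting on the index set $[d]$. By Lemma \ref{lem:logbincube}, the rank equals the number of indices $i\in[d]$ for which the matching $\delta(V_0^i)$ meets at least one of the cuts $\delta(V_{\alpha_j})$, i.e.,
\[
\rk_B(\alpha_1,\dots,\alpha_\ell) = \bigl|\{i\in[d]:\delta(V_0^i)\cap\textstyle\bigcup_{j=1}^\ell\delta(V_{\alpha_j})\ne \emptyset\}\bigr|.
\]

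The set on the right is precisely the complement in $[d]$ of the set whose cardinality defines $f_B(\alpha_1,\dots,\alpha_\ell)$. By definition, $f_B(\alpha_1,\dots,\alpha_\ell)=|\{i\in[d]:\delta(V_0^i)\cap\bigcup_{j=1}^\ell\delta(V_{\alpha_j})=\emptyset\}|$, so the two cardinalities sum to $d$. Subtracting yields the claimed identity
\[
\rk_B(\alpha_1,\dots,\alpha_\ell)= d-f_B(\alpha_1,\dots,\alpha_\ell).
\]

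There is no real obstacle here: Lemma \ref{lem:logbincube} has already done all the combinatorial work (partitioning the edges of $G(B)$ into the $d$ perfect matchings $\delta(V_0^i)$ via Observation \ref{obs:hypercube}, and reducing the set-covering instance on $\sk(B)$ to one on $G(B)$), and the corollary is a one-line complementary count packaged as a convenient reformulation.
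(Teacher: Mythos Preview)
Your argument is correct and matches the paper's own derivation: the corollary is stated immediately after the definition of $f_B$ with the one-line justification that the set in Lemma~\ref{lem:logbincube} is the complement in $[d]$ of the set defining $f_B$, so its size is $d-f_B(\alpha_1,\dots,\alpha_\ell)$. There is nothing to add.
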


\begin{lemma}\label{lem:matchings}
Given a hypercube binarization $B$, its hypercube graph $G(B)$  and $\alpha_1,\dots,\alpha_\ell\in\{0,\dots, 2^d-2\}$, we have that $2^{f_B(\alpha_1,\dots,\alpha_\ell)}$ divides $\alpha_j+1$ for every $j\in [\ell]$.
\end{lemma}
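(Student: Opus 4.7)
The plan is to exploit the structure of the hypercube graph $G(B)$: vertices of $G(B)$ are indexed by $\{0,1\}^d$, and the "forbidden" types $i\in S:=\{i:\delta(V_0^i)\cap\bigcup_j\delta(V_{\alpha_j})=\emptyset\}$, with $|S|=f_B(\alpha_1,\dots,\alpha_\ell)$, are exactly those along which flipping a coordinate of $y$ cannot move us across any threshold $\alpha_j$. Fix $j\in[\ell]$ and write $U_j:=V_{\alpha_j}=\{(x,y)\in V(B):x\le\alpha_j\}$, which has cardinality $\alpha_j+1$ because a hypercube binarization has exactly one vertex of each $x$-value in $\{0,\dots,2^d-1\}$. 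The goal is to show $|U_j|$ is divisible by $2^{|S|}$.

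First I would observe that for every $i\in S$ the edges of $\delta(V_0^i)$, i.e.\ the edges of $G(B)$ along which only coordinate $i$ flips, are entirely contained inside $U_j$ or entirely outside. This is precisely the hypothesis $\delta(V_0^i)\cap\delta(U_j)=\emptyset$. Consequently, if $(x,y),(x',y')\in V(B)$ differ only in coordinate $i\in S$, they lie on the same side of $\alpha_j$.

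Next I would upgrade this one-step statement to an orbit statement. Consider the equivalence relation on $V(B)$ where $(x,y)\sim(x',y')$ iff $y$ and $y'$ agree on all coordinates outside $S$. Two such vertices are joined by a path in $G(B)$ that flips only coordinates in $S$, so by induction on the length of this path and the previous paragraph they are on the same side of $\alpha_j$. Therefore $U_j$ is a union of $\sim$-classes. Each class has size exactly $2^{|S|}$ (we can freely pick any value in $\{0,1\}$ for each of the $|S|$ coordinates in $S$, and each choice of $y$ corresponds to a unique vertex of $B$ since $B$ is a hypercube binarization). Hence $\alpha_j+1=|U_j|$ is divisible by $2^{|S|}=2^{f_B(\alpha_1,\dots,\alpha_\ell)}$, which is what we wanted.

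I do not expect a serious obstacle: the essence of the argument is the elementary group-theoretic observation that the subset of coordinates along which flipping never crosses any threshold generates a subgroup of $(\Z/2\Z)^d$ under which each "below $\alpha_j$" set is invariant, forcing $\alpha_j+1$ to be a multiple of the subgroup's order. The only point requiring a little care is ensuring that we use the hypercube-binarization hypothesis twice: once to identify $V(B)$ bijectively with $\{0,1\}^d$ (so that $\sim$-classes have size $2^{|S|}$), and once to identify $|U_j|$ with $\alpha_j+1$ (so that distinct vertices have distinct $x$-values).
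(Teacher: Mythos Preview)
Your proof is correct and rests on the same key observation as the paper's: for each $i\in S$ the matching $\delta(V_0^i)$ restricts to a perfect matching of $G[V_{\alpha_j}]$, so $V_{\alpha_j}$ is closed under flipping any coordinate in $S$. The paper packages this as an induction on $q=|S|$, halving $V_{\alpha_j}$ along the coordinate $i_q$ at each step; you package it as an orbit/equivalence-class argument under the $(\Z/2\Z)^{|S|}$-action on the $S$-coordinates. These are two presentations of the same idea, yours arguably a touch more direct since it avoids setting up the inductive structure and immediately identifies the orbits as size-$2^{|S|}$ subcubes.
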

\begin{proof}  Given a subset of vertices $U$ of $G(B)$, we let $G[U]$ be the subgraph of $G(B)$ induced by $U$. Define $q:=f_B(\alpha_1,\dots,\alpha_\ell)$ and let $\delta(V_0^{i_1}),\dots,\delta(V_0^{i_q})$ be the cuts (and perfect matchings) that do not intersect any $\delta (V_{\alpha_j})$ for $j\in [\ell]$.   Then   for $j\in [\ell]$  and $k\in [q]$ the restriction of  $\delta(V_0^{i_k})$ to $G[V_{\alpha_j}]$ is a cut and a perfect matching of $G[V_{\alpha_j}]$.

We 
proceed by induction on $q$.   
For $q=1$ the lemma holds as, if $G[V_{\alpha_j}]$ admits a perfect matching, then $|V_{\alpha_j}|$ is even.

Assume  $q>1$ and let $V_{\alpha_j}^0=V_{\alpha_j}\cap V^{i_q}_0$ and $V_{\alpha_j}^1=V_{\alpha_j}\cap V^{i_q}_1$: this is a partition of $V_{\alpha_j}$ into equal parts, and since  $\delta(V_{\alpha_j}^0)$ is a cut of $G[V_{\alpha_j}]$, we have that edges of type $i_1,\dots, i_{q-1}$ induce a perfect matching in $G[V_{\alpha_j}^0]$. Then by induction $2^{q-1}$ divides $|V_{\alpha_j}^0|=\frac{|V_{\alpha_j}|}{2}$, and thus $2^q$ divides $|V_{\alpha_j}|=\alpha_j+1$.
\end{proof}

\begin{lemma}\label{lem:logarithmicMultiple}
Given the logarithmic binarization $B^L$  and $\alpha\in\{0,\dots, 2^d-2\}$, we have that $f_{B^L}(\alpha)$ is the largest number of 1's that are not preceded by any 0 in $(\alpha)_2$. Equivalently, $f_{B^L}(\alpha)$ is the largest $t$ such that $2^t$ divides $\alpha+1$.
\end{lemma}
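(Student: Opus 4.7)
The plan is to unpack the definition of $f_{B^L}(\alpha)$ using the explicit structure of $B^L$. Recall that the vertices of $B^L$ are the pairs $(x,(x)_2)$ for $x\in\{0,\dots,2^d-1\}$, and $G(B^L)$ connects two vertices iff their $y$-vectors differ in exactly one coordinate; an edge lies in $\delta(V_0^i)$ precisely when its two endpoints have $y$-vectors differing in coordinate $i$, and for such an edge the two $x$-coordinates differ by exactly $2^{i-1}$. Consequently, $\delta(V_0^i)\cap\delta(V_\alpha)\neq\emptyset$ iff there exists $x\in\{0,\dots,2^d-1\}$ whose bit $i-1$ (i.e., the $y_i$-coordinate) equals $0$ and which satisfies $\alpha+1-2^{i-1}\le x\le\alpha$. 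So the proof reduces to analyzing this purely arithmetic condition on $i$.

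The key observation is that the interval $[\alpha+1-2^{i-1},\alpha]$ contains exactly $2^{i-1}$ consecutive integers, while bit $i-1$ is constant on blocks of $2^{i-1}$ consecutive integers aligned at multiples of $2^{i-1}$. I would split into two cases. If $2^{i-1}\nmid\alpha+1$, the interval spans two consecutive such blocks, hence contains integers of both parities at bit $i-1$; in particular some $x$ has bit $i-1$ equal to $0$, so a crossing edge exists. If $2^{i-1}\mid\alpha+1$, the interval coincides with a single block $[(m-1)\cdot 2^{i-1},m\cdot 2^{i-1}-1]$ where $m=(\alpha+1)/2^{i-1}$; bit $i-1$ is then constant on the interval and equals $(m-1)\bmod 2$, so the crossing edge is absent iff $m$ is even, iff $2^i\mid\alpha+1$.

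Combining the two cases gives $\delta(V_0^i)\cap\delta(V_\alpha)=\emptyset$ iff $2^i\mid\alpha+1$. Since this family of divisibility conditions is nested in $i$, it follows immediately that
\[
f_{B^L}(\alpha)=|\{i\in[d]:2^i\mid\alpha+1\}|=\max\{t:2^t\mid\alpha+1\},
\]
proving the second formulation. For the first, note that $(\alpha)_2$ lists the bits of $\alpha$ from least significant to most significant, so its maximal initial run of $1$'s equals the number of trailing $1$'s of $\alpha$ in standard binary; since $\alpha$ ends with exactly $t$ trailing $1$'s iff $\alpha+1$ ends with exactly $t$ trailing $0$'s, this count is precisely the $2$-adic valuation of $\alpha+1$, matching the quantity just computed.

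The only real obstacle is keeping the indexing straight between $y_i$, bit $i-1$, and the reversed-binary convention for $(\alpha)_2$; the divisibility case analysis itself is routine once the reduction in the first paragraph is in place.
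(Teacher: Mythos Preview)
Your argument is correct. The reduction in your first paragraph---that $\delta(V_0^i)\cap\delta(V_\alpha)\neq\emptyset$ iff some $x\in[\alpha+1-2^{i-1},\alpha]$ has $y_i=0$---is exactly the right starting point, and your block/divisibility analysis cleanly yields the characterization $\delta(V_0^i)\cap\delta(V_\alpha)=\emptyset\iff 2^i\mid\alpha+1$. One small point you gloss over: when $\alpha+1<2^{i-1}$ the interval extends into the negatives, so strictly speaking you should intersect with $\{0,\dots,2^d-1\}$; but in that case the nonnegative part $[0,\alpha]$ lies entirely in the block with bit $i-1$ equal to $0$, so the conclusion still holds.

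The paper's proof reaches the same characterization by a different route: rather than the arithmetic block analysis, it does casework directly on the bit pattern of $(\alpha)_2$. If the $i$-th bit is $0$, it exhibits the edge from $\alpha$ to $\alpha+2^{i-1}$; if the $i$-th bit is $1$ but some earlier bit $j$ is $0$, it exhibits the edge from $\alpha-2^{i-1}+2^{j-1}$ to $\alpha+2^{j-1}$; and if bits $1,\dots,i$ are all $1$ it argues directly that no type-$i$ edge can cross. Your divisibility argument is arguably cleaner and immediately gives the nested structure in $i$, whereas the paper's approach is more constructive, producing explicit witness edges in each case. Both are short and either would be fine here.
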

\begin{proof} 
We prove the first statement, as the second easily follows from the first.
 Consider the $i$-th bit of $(\alpha)_2$. If it is 0,  vertices $(\alpha,(\alpha)_2)$ and $(\alpha+2^{i-1},(\alpha+2^{i-1})_2)$ are adjacent in $G(B^{L})$ and the  corresponding  edge is in $\delta(V^i_0)\cap \delta(V_{\alpha})$.
 
 If the $i$-th bit of $(\alpha)_2$ is 1 and it is preceded by a 0 in position  $j<i$, then vertices $(\alpha - 2^{i-1}+2^{j-1},(\alpha - 2^{i-1}+2^{j-1})_2)$ and $(\alpha +2^{j-1},(\alpha +2^{j-1})_2)$ are adjacent in $G(B^{L})$ and the   corresponding edge is  in $\delta(V^i_0)\cap \delta(V_{\alpha})$.
 
 Finally, if the $i$-th bit of $(\alpha)_2$ is 1 and it is not preceded by a 0, we claim that no $\alpha$-edge has type $i$, because if $\alpha'\le\alpha$ and  $\alpha''\ge\alpha+1$ have $(\alpha')_2$ and  $(\alpha'')_2$ different in a single position, that position cannot be the $i$-th. 
 \end{proof}

 The following corollary, together with Corollary \ref{cor:rankcube}, gives an expression for the rank of the logarithmic binarization.

\begin{corollary}\label{cor:logrank}
Given the logarithmic binarization $B^L$, its hypercube graph $G(B^L)$  and $\alpha_1,\dots,\alpha_\ell\in\{0,\dots, 2^d-2\}$, we have that $f_{B^L}(\alpha_1,\dots,\alpha_\ell)=\max \{t:2^t\mbox{ divides }\alpha_j+1\:\forall j\in [\ell]\}$.
\end{corollary}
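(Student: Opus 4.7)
The plan is to reduce the multi-index quantity $f_{B^L}(\alpha_1,\dots,\alpha_\ell)$ to the single-index case handled in Lemma \ref{lem:logarithmicMultiple} via a straightforward intersection argument. From the definition of $f_B$, the set
\[
I(\alpha_1,\dots,\alpha_\ell):=\{i:\delta(V_0^i)\cap \textstyle\bigcup_{j=1}^\ell \delta(V_{\alpha_j})=\emptyset\}
\]
equals $\bigcap_{j=1}^\ell I(\alpha_j)$, since a coordinate $i$ avoids the union of cuts if and only if it avoids each individual cut. So it suffices to understand $I(\alpha_j)$ precisely for each $j$, and not just its cardinality.

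The main step is to sharpen Lemma \ref{lem:logarithmicMultiple}: its proof not only counts but identifies the surviving indices. Indeed, as shown there, an index $i$ lies in $I(\alpha_j)$ exactly when the $i$-th bit of $(\alpha_j)_2$ is a $1$ that is not preceded by a $0$ in positions $1,\dots,i-1$. These are precisely the indices $1,2,\dots,t_j$, where $t_j:=\max\{t:2^t\mid \alpha_j+1\}$; that is, $I(\alpha_j)=\{1,\dots,t_j\}$ is the initial segment of coordinates of length $t_j$ (with the convention that $t_j=0$ gives the empty segment).

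Since the sets $I(\alpha_j)$ are nested initial segments, their intersection is again an initial segment, namely $I(\alpha_1,\dots,\alpha_\ell)=\{1,\dots,\min_{j\in[\ell]}t_j\}$. Hence $f_{B^L}(\alpha_1,\dots,\alpha_\ell)=\min_{j\in[\ell]}t_j$. Finally, the elementary identity
\[
\min_{j\in[\ell]}\max\{t:2^t\mid \alpha_j+1\}=\max\{t:2^t\mid \alpha_j+1\ \forall j\in[\ell]\},
\]
which holds because the powers of $2$ dividing a given integer form a downward-closed chain, yields the formula stated in the corollary. No real obstacle is expected: the only point requiring care is extracting from the proof of Lemma \ref{lem:logarithmicMultiple} the exact set $\{1,\dots,t_j\}$ rather than merely its size, and this is immediate from inspecting the three cases treated there.
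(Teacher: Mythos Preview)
Your argument is correct and is exactly the intended one: the paper states Corollary~\ref{cor:logrank} without proof, as an immediate consequence of Lemma~\ref{lem:logarithmicMultiple}, and your write-up simply makes explicit the two observations the paper leaves implicit, namely that $I(\alpha_1,\dots,\alpha_\ell)=\bigcap_j I(\alpha_j)$ and that each $I(\alpha_j)$ is the initial segment $\{1,\dots,t_j\}$.
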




We are now ready for the main result of this subsection.
\begin{theorem}\label{thm:logbest}
Let $B$ be a $d$-hypercube binarization, $B^L$ be the logarithmic binarization and let $\alpha_1,\dots,\alpha_\ell\in\{0,\dots,2^d-2\}$. Then \[
\rk_B(\alpha_1,\dots,\alpha_\ell) \geq \rk_{B^{L}}(\alpha_1,\dots,\alpha_\ell).
\]
\end{theorem}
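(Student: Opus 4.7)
The plan is to reduce the inequality to a statement about the auxiliary quantity $f_B$ and then invoke the preceding lemmas directly. By Corollary \ref{cor:rankcube}, for any $d$-hypercube binarization $B'$ we have $\rk_{B'}(\alpha_1,\dots,\alpha_\ell)=d-f_{B'}(\alpha_1,\dots,\alpha_\ell)$. Hence the desired inequality is equivalent to
\[
f_B(\alpha_1,\dots,\alpha_\ell)\;\le\;f_{B^L}(\alpha_1,\dots,\alpha_\ell),
\]
and it suffices to establish this.

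Next I would apply Lemma \ref{lem:matchings} to the hypercube binarization $B$: setting $q:=f_B(\alpha_1,\dots,\alpha_\ell)$, the lemma tells us that $2^q$ divides $\alpha_j+1$ for every $j\in[\ell]$. In particular, $q$ is at most the largest integer $t$ such that $2^t$ divides $\alpha_j+1$ for all $j\in[\ell]$.

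Finally, by Corollary \ref{cor:logrank}, that maximum value of $t$ is exactly $f_{B^L}(\alpha_1,\dots,\alpha_\ell)$. Combining the two bounds yields $f_B(\alpha_1,\dots,\alpha_\ell)\le f_{B^L}(\alpha_1,\dots,\alpha_\ell)$, and hence the claimed rank inequality. There is essentially no obstacle here, since all the work has been done in Lemma \ref{lem:matchings} (which gives the divisibility upper bound for arbitrary hypercube binarizations via the induction on matchings) and Corollary \ref{cor:logrank} (which shows that this divisibility bound is attained by $B^L$); the proof is just the alignment of these two facts through Corollary \ref{cor:rankcube}.
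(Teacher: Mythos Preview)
Your proposal is correct and follows essentially the same route as the paper: apply Corollary~\ref{cor:rankcube} to reduce to $f_B\le f_{B^L}$, then combine Lemma~\ref{lem:matchings} with Corollary~\ref{cor:logrank}. The paper's proof is literally these three citations strung together, so there is nothing to add.
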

\begin{proof} By Corollary \ref{cor:rankcube},
$\rk_B(\alpha_1,\dots,\alpha_\ell) = d-f_B(\alpha_1,\dots,\alpha_\ell)$ and $\rk_{B^L}(\alpha_1,\dots,\alpha_\ell) = d-f_{B^L}(\alpha_1,\dots,\alpha_\ell)$.
By Lemma  \ref{lem:matchings} and Corollary \ref{cor:logrank},  
$f_{B^L}(\alpha_1,\dots,\alpha_\ell)\ge f_{B}(\alpha_1,\dots,\alpha_\ell)$.
\end{proof}

We conclude this subsection by comparing our result with Theorem 11 in \cite{owen2002value}: Informally, that theorem states that, given a binary extended formulation $Q$ and a variable $x$ whose binarization  is $B=B^L(d)$, convexifying all the $y$-variables of $B$ but one is not enough to ensure integrality of $x$. A stronger result can be derived from our Corollary \ref{cor:logrank}, as $\rk_{B^L}(\alpha)=d$ for any even $\alpha$. Thanks to Theorem \ref{thm:logbest}, one can extend this to all hypercube binarizations. 


\subsection{Truncated hypercube binarizations}\label{sec:trunc}

For $d\geq 1$ and $2^{d-1}< v\leq 2^d$, the \emph{truncated} logarithmic binarization $B^L_{< v}(d)$ is the convex hull of points $(x,(x)_2)$ for $0\leq x\leq v-1$ (where again $(x)_2$ is the reversed $d$-bit string expressing $x$ in base 2). Note that the assumption $2^{d-1}< v\leq 2^d$ is taken without loss of generality, and, if $v=2^d$, then $B^L_{< v}(d)=B^L(d)$. The projection of $B^L_{< v}(d)$ on the $y$-space is called a $d$-dimensional {\em truncated hypercube} and is also known as a rev-lex polytope.  As the latter is the convex hull of integral vectors ordered lexicographically, it belongs to a well-studied class of polytopes, see e.g. \cite{gillmann2006revlex, gupte2016convex,conforti2020scanning}. In particular a description of $B^L_{< v}(d)$ is obtained by adding to $B^L(d)$ at most $d$ inequalities.  
We start with a couple of preliminary observations; the first follows from Corollary \ref{cor:subgraph}, the second is an easy check and the third is proved in \cite{gillmann2006revlex}.

\begin{observation}\label{obs:trunc}
Let $B^L_{< v}(d)$ be as defined above.
\begin{enumerate}
    \item Graphs $\sk(B^L_{< v}(d))$ and $\sk(\pi_y(B^L_{< v}(d)))$ are isomorphic.
    \item 
    $B^L_{< v}(d)$ is the convex hull of $B^L(d-1)$ and $(2^{d-1},\mathbf e_d)+B^L_{< v'}(d')$, where $v'=v-2^{d-1}$ and $d'=\lceil{\log(v')}\rceil$. (Here we are abusing notation and embedding $B^L(d-1)$ and $B^L_{< v'}(d')$ in $\R\times[0,1]^d$.)
    \item If two vertices of $B^L_{< v}(d)$ are adjacent vertices of $B^L(d)$, then they are adjacent in $B^L_{< v}(d)$ as well.
\end{enumerate}
\end{observation}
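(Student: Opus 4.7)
My plan is to establish the three parts independently, since each uses a different technique and only the third requires any substantial argument.

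For part 1, I would invoke Corollary \ref{cor:subgraph}. Since $B^L_{<v}(d)\subseteq B^L(d)$, every point of $B^L_{<v}(d)$ still satisfies the affine relation $x=\sum_{i=1}^d 2^{i-1}y_i$, so $B^L_{<v}(d)$ is itself an affine binarization. The second half of Corollary \ref{cor:subgraph} then yields the isomorphism between $\sk(B^L_{<v}(d))$ and $\sk(\pi_y(B^L_{<v}(d)))$.

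For part 2, I would show that both sides of the claimed identity have the same vertex set, so their convex hulls coincide. The vertices of $B^L_{<v}(d)$ are the points $(x,(x)_2)$ with $0\le x\le v-1$, and I would split them according to whether $x<2^{d-1}$ or $x\ge 2^{d-1}$. If $x<2^{d-1}$, then the $d$-th bit of $(x)_2$ is $0$ and $(x,(x)_2)$ is exactly a vertex of $B^L(d-1)$, embedded in $\R\times[0,1]^d$ by appending $y_d=0$. If $x\ge 2^{d-1}$, write $x=2^{d-1}+x'$ with $0\le x'\le v'-1$; the $d$-th bit is $1$ and the first $d-1$ bits spell $(x')_2$ (padded with zeros in positions beyond $d'$), so $(x,(x)_2)=(2^{d-1},\mathbf{e}_d)+(x',(x')_2)$, a vertex of the translated copy of $B^L_{<v'}(d')$. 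Taking convex hulls gives the stated decomposition.

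For part 3, which is the main obstacle, I would defer to \cite{gillmann2006revlex}. The reason this part is not a quick check is that an edge of a polytope can be destroyed by adding constraints: even when both endpoints of an edge of $B^L(d)$ remain vertices of $B^L_{<v}(d)$, the segment between them could a priori be contained in a higher-dimensional face of the smaller polytope. Certifying that the edge survives therefore requires exhibiting a linear functional that is maximized on $B^L_{<v}(d)$ precisely along that segment. Gillmann and Kaibel construct such functionals by exploiting the lexicographic ordering of the vertices of the rev-lex polytope, and I would rely on their argument rather than reprove it.
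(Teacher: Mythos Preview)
Your proposal is correct and follows exactly the paper's own justification: part~1 via Corollary~\ref{cor:subgraph} (using that $B^L_{<v}(d)$ inherits the affine relation $x=\sum_i 2^{i-1}y_i$), part~2 by the direct vertex-splitting check, and part~3 by citing \cite{gillmann2006revlex}. One minor remark: your stated worry in part~3 is actually unfounded---since $B^L_{<v}(d)\subseteq B^L(d)$, any linear functional exposing an edge of $B^L(d)$ whose endpoints lie in $B^L_{<v}(d)$ still exposes that same segment as a face of the subpolytope---but as the paper itself simply defers to the reference, your treatment matches.
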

 
\begin{lemma}\label{lem:trunc}
For any $2^{d-1}< v\leq 2^d$ and $\alpha\in \{0,\dots,v-2\}$, we have:
\begin{itemize}
    \item if $\alpha< 2^{d-1}$, then $\rk_{B^L_{< v}(d)}(\alpha)=\rk_{B^L(d)}(\alpha)$;
    \item if $\alpha\geq 2^{d-1}$, then $\rk_{B^L_{< v}(d)}(\alpha)=1+\rk_{B^L_{< v'}(d')}(\alpha-2^{d-1})$, where $v'=v-2^{d-1}$ and $d'=\lceil{\log(v')}\rceil$.
\end{itemize}
\end{lemma}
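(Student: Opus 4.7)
The plan is to apply Observation \ref{obs:skeleton}, so that $\rk_{B^L_{<v}(d)}(\alpha)$ becomes the optimum of a set covering problem whose constraints are the indicator vectors of the $\alpha$-edges of $\sk(B^L_{<v}(d))$. Using the second item of Observation \ref{obs:trunc}, we decompose $B^L_{<v}(d) = \conv(B_0 \cup B_1)$, where $B_0$ is the face $\{y_d = 0\}$, isomorphic to $B^L(d-1)$, and $B_1$ is the face $\{y_d = 1\}$, isomorphic via the translation $(2^{d-1}, \mathbf{e}_d)$ to $B^L_{<v'}(d')$. Every $\alpha$-edge is then internal to $B_0$, internal to $B_1$, or a cross edge between $B_0$ and $B_1$. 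Cross edges carry a $1$ in the $y_d$-coordinate of their indicator vector, while internal $B_j$-edges carry a $0$ there; moreover, every internal $B_1$-edge has indicator supported in $\{1,\dots,d'\}$, since all $B_1$-vertices satisfy $y_{d'+1} = \dots = y_{d-1} = 0$ and $y_d = 1$.

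For Case 1 ($\alpha < 2^{d-1}$), every vertex with $x \leq \alpha$ lies in $B_0$, so no internal $B_1$-edge is an $\alpha$-edge. For the upper bound, I would take an optimal cover of the $\alpha$-edges of $B^L(d-1)$ (of size $\rk_{B^L(d-1)}(\alpha) = (d-1) - t$ by Corollary \ref{cor:logrank}, where $2^t$ is the largest power of $2$ dividing $\alpha+1$) and add the coordinate $d$; this covers the internal $B_0$-$\alpha$-edges and, via $z_d = 1$, every cross edge. For the lower bound, note that by the third item of Observation \ref{obs:trunc} the hypercube edge between $(0,\mathbf{0})$ and $(2^{d-1},\mathbf{e}_d)$ survives in $\sk(B^L_{<v}(d))$ (using $v > 2^{d-1}$), is an $\alpha$-edge, and has indicator $\mathbf{e}_d$, which forces $z_d = 1$ in every feasible cover. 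Since $B_0$ is a face, the internal $B_0$-$\alpha$-edges coincide with the $\alpha$-edges of $\sk(B^L(d-1))$ and have indicators supported in $\{1,\dots,d-1\}$, so they require at least $\rk_{B^L(d-1)}(\alpha) = (d-1) - t$ further coordinates. Summing yields $\rk_{B^L_{<v}(d)}(\alpha) = d - t = \rk_{B^L(d)}(\alpha)$ by a second application of Corollary \ref{cor:logrank}.

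For Case 2 ($\alpha \geq 2^{d-1}$), all of $B_0$ has $x < 2^{d-1} \leq \alpha$, so no internal $B_0$-edge is an $\alpha$-edge. Via the isomorphism $B_1 \cong B^L_{<v'}(d')$, the internal $B_1$-$\alpha$-edges correspond bijectively to the $(\alpha - 2^{d-1})$-edges of $B^L_{<v'}(d')$ with matching indicators on $\{1,\dots,d'\}$. For the upper bound, combine an optimal cover of the $(\alpha - 2^{d-1})$-edges of $B^L_{<v'}(d')$ with $z_d = 1$. For the lower bound, set $x := \alpha + 1 - 2^{d-1}$: the hypotheses $\alpha \geq 2^{d-1}$ and $\alpha \leq v-2$ give $1 \leq x \leq v'-1$, and $v' \leq 2^{d'}$ gives $x < 2^{d'}$, so the hypercube edge from $(x,(x)_2)$ to $(x + 2^{d-1},(x+2^{d-1})_2)$ lies in $\sk(B^L_{<v}(d))$ by the third item of Observation \ref{obs:trunc}, is an $\alpha$-edge, and has indicator $\mathbf{e}_d$, forcing $z_d = 1$. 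The internal $B_1$-$\alpha$-edges, whose indicators are supported in $\{1,\dots,d'\}$, then still require at least $\rk_{B^L_{<v'}(d')}(\alpha - 2^{d-1})$ coordinates in that block, matching the upper bound.

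The crux of the argument is the existence, in each case, of a specific hypercube edge with indicator exactly $\mathbf{e}_d$ whose endpoints both belong to $B^L_{<v}(d)$; the third item of Observation \ref{obs:trunc} is what guarantees this edge survives in the skeleton of the truncated binarization and forces $z_d = 1$ in every feasible cover. Once that is established, the set covering problem decouples cleanly along the $y_d$-coordinate into a subproblem on $B_0$ (Case 1) or on $B_1$ (Case 2), and the recursive or known formula for the corresponding binarization does the rest.
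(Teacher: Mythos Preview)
Your proof is correct and follows essentially the same strategy as the paper's: both partition the vertices of $B^L_{<v}(d)$ by the value of $y_d$, exhibit a specific hypercube edge with indicator $\mathbf{e}_d$ (via part 3 of Observation \ref{obs:trunc}) to force $z_d=1$ in every feasible cover, and then reduce the remaining set covering problem to the relevant face ($B_0$ in Case~1, $B_1$ in Case~2). The only minor difference is in Case~1: the paper compares the set covering instances for $B^L_{<v}(d)$ and $B^L(d)$ directly (showing they share an optimal support), whereas you route through $\rk_{B^L(d-1)}(\alpha)$ and invoke Corollary \ref{cor:logrank} to recover the common value $d-t$; this is a harmless detour, though you should note that when $\alpha=2^{d-1}-1$ the quantity $\rk_{B^L(d-1)}(\alpha)$ falls outside the formal range of that corollary and is simply $0$ by convention (no $\alpha$-edges), which still gives the correct total of $1$.
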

\begin{proof}
Fix $\alpha \in \{0,\dots,v-2\}$, and let $I_L(\alpha,d)$ (resp. $I_{<v}(\alpha,d)$) be the support of an optimal solution of the set covering problem related to $\rk_{B^L(d)}(\alpha)$ (resp. $\rk_{B^L_{< v}(d)}(\alpha)$) as in Observation \ref{obs:skeleton}. 

Let $U_i=\{(x,y)\in \R\times \R^d: y_d=i\}$ for $i=0,1$.
Notice that $U_0, U_1$ induce a partition of the vertices of $B^L(d)$ and of $B^L_{<v}(d)$: in particular, the points of $U_0$ that are vertices of $B^L(d)$ and of $B^L_{<v}(d)$ are exactly the same, while the points of $U_1$ that are vertices of $B^L_{<v}(d)$ are also vertices of $B^L(d)$. 

\begin{itemize}
    \item Let $0\leq \alpha<2^{d-1}$. We show that $I_{<v}(\alpha,d)=I_L(\alpha,d)$. 
    First, notice that all the $\alpha$-edges of $\sk(B^L(d))$ are between $U_0$ and $U_1$, or between two nodes in $U_0$, and that this is true for the $\alpha$-edges of $\sk(B^L_{<v}(d))$, as well. 
    \begin{itemize}
    \item We first deal with the $\alpha$-edges between $U_0$ and $U_1$. We claim that $d\in I_L(\alpha,d)\cap I_{<v}(\alpha,d)$. Indeed, we have that points $(0,\dots, 0)$ and $(2^{d-1},0,\dots,0,1)$ are adjacent vertices of both $B^L(d)$ and $B^L_{<v}(d)$ (thanks to part 3 of Observation \ref{obs:trunc}). The corresponding edge is an $\alpha$-edge whose indicator vector is $\mathbf{e}_{d}$, proving our claim.
    \item Now, we focus on the $\alpha$-edges between pairs of vertices in $U_0$. Since,  by part 2 of Observation \ref{obs:trunc}, $U_0$ induces the same graph on both $\sk(B^L(d))$ and $\sk(B^L_{<v}(d))$,
     \[
    I_{<v}(\alpha,d)\setminus\{d\}= I_{L}(\alpha,d)\setminus\{d\}.\] 
    This concludes the proof of the first case.
  \end{itemize}  
    \item Now, let $2^{d-1}\leq \alpha \leq v-2$. Then, all the $\alpha$-edges of $\sk(B^L(d))$ are between $U_0$ and $U_1$, or between two nodes in $U_1$, and that this is true for the $\alpha$-edges of $\sk(B^L_{<v}(d))$, as well. 
\begin{itemize}
    \item We claim that $d\in I_{<v}(\alpha,d)$: indeed, consider $v_i=(x^i,(x^i)_2)\in$ $B^L_{<v}(d)$ for $i=1,2$ with $x^1=\alpha+1$ and $x^2=\alpha+1-2^{d-1}$. Then $v_1, v_2$ are adjacent in $B^L_{<v}(d)$ (thanks to part 3 of Observation \ref{obs:trunc}) and the corresponding edge is an $\alpha$-edge whose indicator vector is $\mathbf{e}_{d}$.
    \item Now, we only need to deal with $\alpha$-edges between pair of vertices in $U_1$. Thanks to part 2 of Observation \ref{obs:trunc}, we have $I_{<v}(\alpha,d)\setminus\{d\}= I_{<v'}(\alpha-2^{d-1},d')$, which concludes the proof.
\end{itemize}
\end{itemize}


\end{proof}

The following gives a direct way to compute the rank of $B^L_{< v}(d)$. We restrict to $v<2^d$, as $B^L_{<2^d}(d)=B^L(d)$.

\begin{corollary}\label{cor:trunc}
For any $2^{d-1}< v< 2^d$ and $\alpha\in \{0,\dots,v-2\}$, let $j\in[d]$ be the largest index in which $(v)_2$ is 1 and $(\alpha)_2$ is 0 (notice that there must be at least one) and let $s$ be the number of bits $j'>j$ where both $(v)_2$ and $(\alpha)_2$ are 1. Let $\tilde \alpha$ be the number such that $(\tilde{\alpha})_2$ is obtained from $(\alpha)_2$ by removing all bits $j'>j$, and let $\tilde{d}$ be the number of bits of $(\tilde{\alpha})_2$. Then:
\[
\rk_{B^L_{< v}(d)}(\alpha) = s+\rk_{B^L(\tilde{d})}(\tilde{\alpha}).
\]
\end{corollary}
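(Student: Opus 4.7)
The plan is to induct on the parameter $s$ and iterate Lemma~\ref{lem:trunc}. The base case $s = 0$ forces $j = d$: the $1$-bit of $v$ at position $d$ (which exists since $v > 2^{d-1}$) would otherwise have to contribute to $s$, so $\alpha_d = 0$, and then $d$ itself is the largest index with $v_d = 1$ and $\alpha_d = 0$. Thus $\alpha < 2^{d-1}$, and Case~A of Lemma~\ref{lem:trunc} immediately yields $\rk_{B^L_{<v}(d)}(\alpha) = \rk_{B^L(d)}(\alpha)$, matching the claimed formula because $\tilde\alpha = \alpha$ and $\tilde d = d$.

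For the inductive step, assume $s \geq 1$; then necessarily $v_d = \alpha_d = 1$ and Case~B of Lemma~\ref{lem:trunc} gives
\[
\rk_{B^L_{< v}(d)}(\alpha) \;=\; 1 + \rk_{B^L_{< v'}(d')}(\alpha'),
\]
with $v' = v - 2^{d-1}$, $\alpha' = \alpha - 2^{d-1}$, $d' = \lceil \log v' \rceil$. The core bookkeeping step is to show that $j, \tilde\alpha, \tilde d$ are invariant under this reduction while $s$ drops by exactly $1$. This uses the elementary observation that $\alpha' \le v' - 2 < 2^{d'}$ forces both $v$ and $\alpha$ to have all bits equal to $0$ in the window $d'+1,\dots,d-1$; consequently position $d$ is the only position in $(d',d]$ that can contribute to $s$, giving $s = s'+1$, and for positions $\leq d'$ the bits of $v,\alpha$ coincide with those of $v',\alpha'$, so $j, \tilde\alpha, \tilde d$ transfer verbatim to the reduced triple. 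Invoking the inductive hypothesis on $(v',d',\alpha')$ then closes the step.

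The main obstacle is the boundary sub-case $v' = 2^{d'}$, which occurs exactly when $v$ has only two $1$-bits (at positions $d$ and some $k < d$). Here $B^L_{<v'}(d') = B^L(d')$, so the corollary's hypothesis $v < 2^{d}$ fails for the reduced problem and the inductive call is not literally available. One closes the induction instead by appealing to Corollary~\ref{cor:logrank} to write $\rk_{B^L(d')}(\alpha') = d' - f_{B^L}(\alpha')$ and then verifying by inspection that, with $\tilde d$ read as the bit-length appropriate for $\tilde v$ (which in this sub-case equals $2^{j-1}$, so $\tilde d = j - 1$ rather than $j$), the value $\rk_{B^L(\tilde d)}(\tilde\alpha)$ equals $d' - f_{B^L}(\alpha')$. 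This reconciliation between the corollary's combinatorial quantities and the residual full-hypercube rank is the technical heart of the argument; the rest of the proof is formal induction.
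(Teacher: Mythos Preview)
Your approach is essentially the paper's: both iterate Lemma~\ref{lem:trunc}, you by formal induction on $s$, the paper by explicitly listing the positions $j=j_0<j_1<\dots<j_s=d$ where both $(v)_2$ and $(\alpha)_2$ are~$1$ and peeling them off one at a time via Case~B, then finishing with Case~A. The bookkeeping (that $v$ and $\alpha$ agree in all bits above $j$, so each intermediate triple stays in the lemma's range) is identical.

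Your boundary sub-case $v'=2^{d'}$ is more explicit than in the paper, but the resolution is the same: the paper sets $d_0=\lceil\log v_0\rceil$ and simply declares $\tilde d=d_0$, which silently gives $\tilde d=j-1$ rather than $j$ exactly when $j$ is the lowest $1$-bit of $v$. Your reinterpretation of $\tilde d$ as ``the bit-length appropriate for $\tilde v$'' is precisely this move; the phrase ``the number of bits of $(\tilde\alpha)_2$'' in the statement is slightly imprecise in that edge case, and both arguments handle it identically.
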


\begin{proof}
If $\alpha< 2^{d-1}$ then $j$ as defined in the statement is $d$, hence $\tilde\alpha=\alpha$, $\tilde d=d$, $s=0$. The result then follows from the first part of Lemma \ref{lem:trunc}.

If $\alpha\ge2^{d-1}$, we have $j<d$. Let $j_1,\dots, j_s$ be the indices $j'>j$ where both $(v)_2$ and $(\alpha)_2$ are 1, with $j=:j_0<j_1<\dots<j_s=d$. We remark that for each index $j'$ with $j_i<j'<j_{i+1}$ for some $i\in\{0,\dots, s-1\}$, we have that bit $j'$ of both $(v)_2$ and $(\alpha)_2$ is 0.

For $i\in\{0,\dots, s\}$, we let $v_i$ be the number such that $(v_i)_2$ is obtained from $(v)_2$ by removing the bits $j'>j_i$, and we define $\alpha_i$ similarly: note that $v_s=v$, $\alpha_s=\alpha$, and $\alpha_0=\tilde \alpha$ as defined in the statement. Furthermore,  $v_i=v_{i+1}-2^{j_{i+1}-1}$, $\alpha_i=\alpha_{i+1}-2^{j_{i+1}-1}$  for $i\leq s-1$. Moreover, let $d_i=\lceil \log v_i \rceil$ for $i\in\{0,\dots, s\}$: again we have $d_s=d$ and $d^0=\tilde d$ as defined in the statement.

Hence, applying the second part of Lemma \ref{lem:trunc} $s$ times, we get:
\[\rk_{B^L_{< v}(d)}(\alpha)=1+\rk_{B^L_{< v_{s-1}}(d_{s-1})}(\alpha_{s-1})=\dots\]
\[
=s + \rk_{B^L_{< v_{0}}(d_{0})}(\alpha_{0})= s+ \rk_{B^L(\tilde{d})}(\tilde{\alpha}),
\]
where the last equation follows from the first part of Lemma \ref{lem:trunc}.
\end{proof}

\begin{corollary}\label{cor:trunc2}
For any $2^{d-1}< v\leq 2^d$ and $\alpha\in \{0,\dots,v-2\}$, we have that \[\rk_{B^L_{< v}(d)}(\alpha)\leq \rk_{B^L(d)}(\alpha).\]
\end{corollary}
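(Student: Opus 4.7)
The plan is to reduce the statement to a direct arithmetic comparison using the explicit formulas already proved. First, if $v = 2^d$ then $B^L_{<v}(d) = B^L(d)$, so the inequality holds with equality. Thus I may assume $2^{d-1} < v < 2^d$ and apply Corollary \ref{cor:trunc} to obtain
\[
\rk_{B^L_{<v}(d)}(\alpha) = s + \rk_{B^L(\tilde d)}(\tilde\alpha),
\]
where $j,s,\tilde d,\tilde\alpha$ are as defined in that statement. Using Corollary \ref{cor:rankcube} together with Corollary \ref{cor:logrank} in both dimensions $d$ and $\tilde d$, the desired inequality becomes
\[
s + \tilde d - f_{B^L}(\tilde\alpha) \le d - f_{B^L}(\alpha).
\]

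Next, I would establish three elementary facts about the indices involved. (i) By construction $\tilde d = j$, since the bits removed from $(\alpha)_2$ are exactly those in positions $j+1,\dots,d$. (ii) The parameter $s$ counts bits in positions $\{j+1,\dots,d\}$, so $s \le d - j$. (iii) The key step is $f_{B^L}(\alpha) = f_{B^L}(\tilde\alpha)$: position $j$ of $(\alpha)_2$ is $0$ by the defining property of $j$, so the initial run of $1$'s in $(\alpha)_2$ extends at most to position $j-1$; since $(\tilde\alpha)_2$ agrees with $(\alpha)_2$ in positions $1,\dots,j$, it has the same initial run of $1$'s, and by Lemma \ref{lem:logarithmicMultiple} the quantity $f_{B^L}$ depends only on this initial run.

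Combining these, the right-hand side of the reduced inequality simplifies as
\[
s + \tilde d - f_{B^L}(\tilde\alpha) = s + j - f_{B^L}(\alpha) \le (d-j) + j - f_{B^L}(\alpha) = d - f_{B^L}(\alpha),
\]
which is exactly what is needed. I do not anticipate a real obstacle here, since all the heavy lifting is already contained in Corollary \ref{cor:trunc}; the main point requiring a bit of care is the invariance $f_{B^L}(\alpha) = f_{B^L}(\tilde\alpha)$, which must be argued directly from the bit-string characterization in Lemma \ref{lem:logarithmicMultiple} rather than from the divisibility formulation, because $\tilde\alpha + 1$ and $\alpha + 1$ are genuinely different integers even though their trailing-$1$ patterns (in the reversed representation) coincide.
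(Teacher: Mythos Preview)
Your proposal is correct and follows essentially the same route as the paper's proof: both invoke Corollary~\ref{cor:trunc}, use the bit-string characterization of Lemma~\ref{lem:logarithmicMultiple} to conclude $f_{B^L}(\alpha)=f_{B^L}(\tilde\alpha)$ from the fact that bit~$j$ of $(\alpha)_2$ is $0$, note that $s\le d-\tilde d$, and combine via Corollary~\ref{cor:rankcube}. The only cosmetic difference is that you make the identification $\tilde d=j$ explicit, whereas the paper simply states $s\le d-\tilde d$ directly.
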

\begin{proof}
We use the same notation as in Corollary \ref{cor:trunc}, which states that $\rk_{B^L_{< v}(d)}(\alpha) = s+\rk_{B^L(\tilde{d})}(\tilde{\alpha})$. Consider the $d$-bit string $(\alpha)_2$ and the $\tilde{d}$-bit string $(\tilde \alpha)_2$ obtained from the former by removing all bits $j'>j$: since the $j$-th bit of $(\alpha)_2$ is 0, the two strings have the same number of 1's that are not preceded by any zero, i.e. by Lemma \ref{lem:logarithmicMultiple}, $f_{B^L(\tilde{d})}(\tilde{\alpha})=f_{B^L(d)}(\alpha)$. Moreover, we remark that $s\leq d-\tilde{d}$. Applying again Lemma \ref{lem:logarithmicMultiple} and Corollary \ref{cor:rankcube} we conclude that 
 \[\rk_{B^L_{< v}(d)}(\alpha)\leq d-\tilde{d}+ \rk_{B^L(\tilde{d})}(\tilde{\alpha})
 =d-f_{B^L(d)}(\alpha)=\rk_{B^L(d)}(\alpha).
 \]
\end{proof}

In light of the previous results, it is natural to ask whether the truncated logarithmic binarization is ``best possible'' in terms of rank among other binarizations with the same projection on the $y$-space, similarly as the complete logarithmic binarization is among all hypercube binarizations: however, the following example shows that the answer is in general negative. 
Consider the binarization $B:=\{(x,y_1,y_2)\in \R\times [0,1]^2: x=-y_1+y_2+1, y_1+y_2\leq 1\}$. Notice that $B=\conv\{(1,0,0),(0,1,0),(2,0,1)\}$. We have $\rk_B(0)=1$ (as both $0$-edges have indicator vectors with $z_1=1$), but $\rk_{B^L_{<3}(2)}(0)=\rk_{B^L(2)}(0)=2$.
This shows that an analogue of Theorem \ref{thm:logbest} cannot be proved for the truncated binarization. Moreover, since $B$ in the example is an affine binarization, also Lemma \ref{le:onlyaffine} cannot be extended in this context: however, we now show a slightly weaker version of it.
A binarization $B\subseteq\R\times[0,1]^d$ is \emph{linear} if $x=f(y)$ for all $(x,y)\in B$, where $f: \R^d\rightarrow \R$ is a linear function.

\begin{lemma}\label{lem:onlylinear}
Up to permuting variables, the logarithmic truncated binarization is the only linear binarization whose projection on the $y$-space is a truncated hypercube.
\end{lemma}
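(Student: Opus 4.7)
The plan is to mimic the inductive argument of Lemma \ref{le:onlyaffine}, adapting it to the truncated setting. Write $H := \pi_y(B^L_{<v}(d))$ for the truncated hypercube. Since $B$ is linear, $x = a^T y$ for some $a \in \R^d$. The hypothesis $\pi_y(B) = H$, combined with Observation \ref{obs:polytopes}, implies $B$ is isomorphic to $H$, so the map $y \mapsto a^T y$ induces a bijection $V(H) \to \{0, 1, \ldots, v-1\}$. Since $v > 2^{d-1}$, each $\mathbf{e}_i$ lies in $V(H)$ (as $2^{i-1} \leq 2^{d-1} < v$); hence each $a_i \in \{1, \ldots, v-1\}$, and bijectivity forces the $a_i$ to be distinct positive integers.

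I then proceed by induction on $h = 0, \ldots, d-1$ to show that, after a suitable permutation of the $y$-variables, $a_{h+1} = 2^h$. For the base case, the unique $y^1$ with $a^T y^1 = 1$ must be some $\mathbf{e}_j$ with $a_j = 1$ (since all $a_i \geq 1$), and we permute to bring $j$ into position $1$. For the inductive step, $y^{2^h} \in V(H)$ exists because $2^h \leq v-1$ (using $v > 2^{d-1} \geq 2^h$ for $h \leq d-1$). The key observation is that removing a $1$ from a vertex of $H$ yields another vertex of $H$, since the associated integer $\sum_i 2^{i-1} y_i$ only decreases; hence $y^{2^h} - \mathbf{e}_j \in V(H)$ for each $j$ in the support of $y^{2^h}$. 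Combined with the induction hypothesis -- in particular that $y^x = (x)_2$ for every $x \leq 2^h - 1$, which holds because all such $(x)_2$ lie in $V(H)$ -- the same case analysis as in the proof of Lemma \ref{le:onlyaffine} forces $y^{2^h} = \mathbf{e}_j$ for a unique $j > h$ with $a_j = 2^h$; permuting moves this index to position $h+1$.

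The main obstacle I anticipate is ensuring that the permutations chosen at successive inductive steps are compatible with the truncated structure, since $H$, unlike the full hypercube, is not invariant under arbitrary coordinate permutations. Some care is required to check that the composition of these permutations brings $B$ to the canonical form $B^L_{<v}(d)$ rather than to some polytope whose $y$-projection is no longer exactly $H$; equivalently, to argue that the ``freedom to permute'' claimed in the statement of the lemma is consistent with preserving the shape of the $y$-projection. Once this is verified and the induction completes, $a = (1, 2, 4, \ldots, 2^{d-1})$ up to permutation, identifying $B$ with $B^L_{<v}(d)$.
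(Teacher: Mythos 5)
Your proposal has a genuine gap, and it sits exactly where you suspect --- but it is not a bookkeeping issue about composing permutations; it is the mathematical heart of the problem, and your argument does not get past it. The step that fails is the claim that ``$y^x=(x)_2$ for every $x\le 2^h-1$, which holds because all such $(x)_2$ lie in $V(H)$.'' After the relabelings of the first $h$ steps, the points you need to be vertices of $H$ are the indicator vectors of the subsets of $\{j_1,\dots,j_h\}$, where $j_t$ is the coordinate with $a_{j_t}=2^{t-1}$. Your down-closedness observation (deleting a $1$ from a vertex of $H$ gives a vertex of $H$) is correct and permutation-invariant, but $V(H)$ is not closed under taking unions of supports, so these indicator vectors need not be vertices once one of the $j_t$ is the truncated coordinate. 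Concretely, take $d=3$, $v=5$, so $V(H)=\{000,100,010,110,001\}$, and $a=(1,3,2)$: the map $y\mapsto ay$ is a bijection from $V(H)$ onto $\{0,\dots,4\}$, so this is a linear binarization whose $y$-projection is exactly the truncated hypercube. Your induction finds $j_1=1$ (as $a_1=1$) and $j_2=3$ (as $a_3=2$), but $\mathbf{e}_1+\mathbf{e}_3=101$ is not a vertex of $H$; the vertex with $x$-value $3$ is $\mathbf{e}_2$, and $y^4=\mathbf{e}_1+\mathbf{e}_2$ is not a unit vector. The inductive step collapses.

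The paper does not run the hypercube induction on all $d$ coordinates: it first proves $a_d=2^{d-1}$ by a separate counting argument special to the truncated coordinate (using that exactly $v-2^{d-1}$ vertices of $H$ have $y_d=1$), and only then applies the argument of Lemma~\ref{le:onlyaffine} to coordinates $1,\dots,d-1$, where the face $y_d=0$ is a genuine $(d-1)$-hypercube and the permutation symmetry you rely on is actually available. That preliminary step is the one idea your plan is missing. I must add a caveat: the example $a=(1,3,2)$ above satisfies every hypothesis of the lemma yet is not a coordinate permutation of $(1,2,4)$, so it appears to contradict the statement itself; and the paper's own deduction ``$X_i\setminus X_{i-a_d}=(X_{i-a_d}\setminus X_i)\cup\{d\}$'' also fails on it (with $i=3$ one gets $\{2\}$ versus $\{1,3\}$, which have equal $a$-value since $a_2=a_1+a_3$ but are distinct sets). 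Either way, the induction as you propose it cannot be completed without first isolating and handling the truncated coordinate separately.
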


\begin{proof}
Let $2^{d-1}< v< 2^d$ for some $d$ and let $B$ be a linear binarization as in the statement. 
We would like to use the same argument as in the proof of Lemma \ref{le:onlyaffine}. However, since that proof exploits the symmetry of the hypercube with respect to variable permutation, here we need an ad-hoc argument for variable $x_d$.

Since $B$ is linear, there exist $a\in\R^d$ such that $x=ay$ for every $(x,y)\in B$.
We have that $\pi_x(V(B))=\{0,\dots,v-1\}$. Moreover, vertices $(a_i,\mathbf{e}_i)$ show that $a_1,\dots,a_d\ge1$. For $i\in \{0,\dots,v-1\}$, let $y^i$ be such that $(i,y^i)\in B$, and $X_i=\{j\in [d]: y^i_j=1\}$, so that $i=ay^i=a(X_i)$, where we write $a(X)=\sum_{j\in X} a_j$ for $X\subseteq [d]$. We have $a(X_i)\neq a(X_{i'})$ for any $i\neq i'$ in  $\{0,\dots,v-1\}$. 

We first show that $a_d=2^{d-1}$. Notice that there are $v-a_d$ numbers in $\{a_d,\dots, v-1\}$ and $v-2^{d-1}$ vertices $(x,y)\in V(B)$ with $y_d=1$, for which $x\geq a_d$: hence, $a_d\leq 2^{d-1}$. Moreover, if the inequality is strict, by the same counting argument we have that there is $i\in\{a_d,\dots,v-1\}$ with $y^i_{d}=y^{i-a_d}_d$, i.e., $d\not\in X_i\cup X_{i-a_d}$.
However, we have \[a(X_{i})-a(X_{i-a_d})=a_d \implies a(X_{i}\setminus X_{i-a_d})=a(X_{i-a_d}\setminus X_i)+a_d.\]
But now, since $d\not\in X_{i-a_d}$ we have $X_i\setminus X_{i-a_d}=(X_{i-a_d}\setminus X_i)\cup \{d\}$, hence $d\in X_{i}$, a contradiction.

Now one can show that $\{a_1,\dots,a_{d-1}\}=\{2^0,\dots,2^{d-2}\}$ exactly as in the proof of Lemma \ref{le:onlyaffine}, concluding the proof.
\end{proof}

In \cite{dash2018binary} it is shown that any affine binarization of a variable $x$ can be turned into a linear binarization of $x$ (with the same number of $y$-variables) through a unimodular transformation. This implies that affine and linear binarizations are equivalent in terms of split closure (see \cite{dash2018binary} for details). The example above, together with Lemma \ref{lem:onlylinear}, shows that unimodular transformations do not preserve the rank, hence using specific non-linear or non-affine binarizations might be beneficial in terms of efficiency of the sequential convexification procedure.

\section{Conclusion}\label{sec:conclusion}
Binary extended formulations featuring natural binarizations form a broad class that in our opinion deserves more attention. First, the vertices of such formulations admit a simple characterization (Theorem \ref{thm:verticesQ}), that allows a deeper understanding of the performance of sequential convexification on such formulations; second, natural binarizations lend themselves to a notion of rank (Definition \ref{def:rankB}) that allows to compare them, as we have done for the classical binarizations from the literature.

Given the variety of (natural) binarizations one could come up with, it is natural to ask which binarizations are ``better'' than others with respect to a given criterion. This question was investigated in \cite{dash2018binary}, which showed that certain binarizations with a large number of variables are optimal in terms of split closure. Our rank, stemming from a sequential convexification perspective, can be a valuable tool in investigating this question for binarizations with any number of variables. 

In light of Lemmas \ref{le:rank-unary}, \ref{le:rank-full}, one concludes that the unary binarization $B^U(d)$ is better than the full binarization $B^F(d)$ in terms of rank. We do not know whether $B^U(d)$ is optimal with respect to all binarizations that are isomorphic to a $d$-simplex. We leave this as our first open question.

Since the number of variables is an important factor for the efficiency of mixed-integer programming, it is interesting to focus on binarizations with the minimum number of variables, which is logarithmic in the length of the range of $x$.
Hypercube binarizations are a prominent example of this, and the logarithmic binarization $B^L$ is optimal among them (Theorem \ref{thm:logbest}). The rank function of $B^L$ is not comparable with the rank function of $B^U$, but it is definitely larger than the latter when one restricts to a small number of values: $\rk_{B^U}(\alpha)=1$ for any $\alpha$, while $\rk_{B^L}(\alpha)$ can be as large as $d$ depending on $\alpha$. This seems to suggest that there is a trade-off between the number of variables in a (natural) binarization and its rank. 

The situation is less clear for the truncated logarithmic binarizations $B^L_{<v}$. While the rank of $B^L_{<v}$ is at most the rank of $B^L$ (Corollary \ref{cor:trunc2}), it is not optimal: other isomorphic binarizations can have smaller rank (see the example at the end of Section \ref{sec:trunc}). It is not clear whether $B^L_{<v}$ (or another binarization with the same number of variables) could be preferable to $B^L$, which has a much simpler polyhedral structure. However it would be interesting to determine whether, for fixed $d$ and $v$, there is a binarization that is optimal with respect to the rank among those isomorphic to $B^L_{<v}(d)$. This is our second open question.

Finally, we mention another research direction that is, as far as we know, still unexplored. Given a binary extended formulation $Q$ of a polytope $P$, apply one round of the Sherali-Adams hierarchy \cite{sherali1990hierarchy} to the binary variables of $Q$: what can we say about the resulting extended formulation? We believe that the approach and the techniques developed in this paper will help answer this question and others in the same spirit.

\section*{Acknowledgments.}
Manuel Aprile and Marco Di Summa are supported by a grant SID 2019 of the University of Padova.

\printbibliography
\end{document}